\newcommand{\titlename}{Local Dimensions of Self-similar Measures Satisfying the Finite Neighbour Condition}
\newcommand{\shorttitlename}{FNC Local Dimensions}
\newcommand{\docclasses}{28A80}
\newcommand{\dockeywords}{iterated function system, self-similar, local dimension, multifractal analysis, weak separation condition}
\subjclass[2020]{\docclasses}
\keywords{\dockeywords}
\begin{document}
\title[\shorttitlename]{\titlename}
\author{Kathryn E. Hare \and Alex Rutar}
\address{Dept. of Pure Mathematics, University of Waterloo, Waterloo, Ontario N2L 3G1, Canada}
\email{kehare@uwaterloo.ca}
\address{Mathematical Institute, North Haugh, St Andrews, Fife KY16 9SS, Scotland}
\email{alex@rutar.org}
\thanks{KEH was supported by NSERC Grant 2016-03719.
AR was supported by this grant as well as EPSRC Grant EP/V520123/1}

\begin{abstract}
    We study sets of local dimensions for self-similar measures in $\mathbb{R}$ satisfying the finite neighbour condition, which is formally stronger than the weak separation condition but satisfied in all known examples.
    Under a mild technical assumption, we establish that the set of attainable local dimensions is a finite union of (possibly singleton) compact intervals.
    The number of intervals is bounded above by the number of non-trivial maximal strongly connected components of a finite directed graph construction depending only on the governing iterated function system.
    We also explain how our results allow computations of the sets of local dimensions in many explicit cases.
    This contextualizes and generalizes a vast amount of prior work on sets of local dimensions for self-similar measures satisfying the weak separation condition.
\end{abstract}

\maketitle
\tableofcontents
\section{Introduction}
A natural question when studying Borel probability measures on the real line, in particular those which are not absolutely continuous with respect to Lebesgue measure, is to quantify the singularity of the measure.
The Hausdorff dimension of the measure provides one coarse measurement.
A more fine-grained approach is through the local dimensions of the measure $\mu$ at points $x$ in its support, namely, the quantities
\begin{equation*}
    \dim_{\loc}\mu (x)=\lim_{r\to 0}\frac{\log \mu (B(x,r))}{\log r}.
\end{equation*}
In this paper, we are interested in determining properties of the set of attainable local dimensions for a given measure.

Our focus is on the invariant measures associated with an iterated function system (IFS) of similarities on $\mathbb{R}$, also known as self-similar measures.
These measures are simple to describe (see \cref{e:ifs} for the definition), yet exhibit rich and complex behaviour.
Historically, such measures have been of great interest.

Investigation of the sets of local dimensions of self-similar measures is related to multifractal analysis, in which one studies dimensional properties of the level sets of the local dimension function.
A heuristic relationship, known as the multifractal formalism \cite{hp1983}, implies (when it is satisfied) that the set of local dimensions is a closed interval.
The multifractal formalism holds if the IFS satisfies the classical open set condition (OSC) and there are simple formulas for the endpoints of the interval of attainable local dimensions \cite{cm1992,pat1997}.
But when the OSC fails to hold, the situation is much more complicated and less is known.

In \cite{hl2001}, Hu and Lau discovered that when $\mu$ is the 3-fold convolution of the classical middle-third Cantor measure, the set of local dimensions of $\mu$ consists of a closed interval along with an isolated point.
Generalizations of this example were studied in \cite{hhn2018,shm2005}, for example, while Testud \cite{tes2006a} gave an example of a Cantor-like measure, but with some of the similarities in the IFS having negative contraction factors, whose set of local dimensions was the union of two disjoint (non-trivial) intervals.
Another much studied family of self-similar measures which fail the OSC are the Bernoulli convolutions.
These are the measures associated with the IFS $\{\rho x,\rho x+1-\rho \}$ where $1/2<\rho <1$.
(See \cite{var2018} for more background on Bernoulli convolutions.)
It was shown by Feng \cite{fen2005} that when $\rho$ is the reciprocal of a simple Pisot number, such as the Golden mean, the set of local dimensions of the corresponding uniform Bernoulli convolution is, again, a closed interval.
However, all biased Bernoulli convolutions (regardless of the choice of $\rho $) and unbiased Bernoulli convolutions with contraction ratio greater than the reciprocal of the Golden mean have an isolated point in their set of local dimensions \cite{hh2019}.
We refer the reader to \cref{s:exifs} for more discussion on these important examples.

Convolutions of the middle-third Cantor measure and the Bernoulli convolutions with contraction factor the reciprocal of a Pisot number are all examples of self-similar measures associated with IFSs that satisfy the weak separation condition (WSC) \cite{ln1999}.
This separation condition is similar to the open set condition but allows exact overlaps \cite{zer1996}.
For such measures, the second author recently established the existence of a directed transition graph that encodes the local behaviour of the measure, and related the multifractal analysis of the measure with connectivity properties of the graph \cite{ruttoappear}.
One corollary of this earlier work is that when the transition graph is strongly connected, the set of attainable local dimensions of the measure is a closed interval.

In this paper, we significantly extend this local dimension result beyond the strongly connected case to obtain a more thorough understanding of sets of attainable local dimensions.
We specialize slightly to the case where the transition graph is finite, which we call the finite neighbour condition.
This separation condition is closely related to the generalized finite type condition defined by Lau and Ngai \cite{ln2007}.
The finite neighbour condition is equivalent to the weak separation condition when the support of the measure is an interval \cite{hhr2021}.
Our main contribution is to establish under the finite neighbour condition, and a weak technical assumption, that the set of local dimensions is a finite union of (possibly singleton) intervals.
Moreover, the number of intervals is bounded above by the number of non-trivial maximal strongly connected components of the transition graph.

Our research generalizes and contextualizes the prior analysis of sets of local dimensions for overlapping iterated function systems satisfying the weak separation condition.
We should emphasize that, in contrast with much of the earlier work on this problem, we do not require the IFS to have similarities with commensurable contraction factors.
Moreover, we are not aware of any examples of self-similar measures in $\mathbb{R}$, satisfying the weak separation condition, to which our results do not apply.

\subsection{Organization of the paper}

The main content of the paper is separated into two conceptual components: analysis of a graph-theoretic symbolic case, and specialization to self-similar measures.

First, in \cref{s:gd-mps}, we introduce a general weighted matrix product system.
This symbolic formalism can be thought of as a weighted generalization of the matrix-valued functions on shift space studied by past authors \cite{fen2003a,fen2009,fl2002}.
Under an irreducibility hypothesis similar to \cite{fen2009}, and using modified versions of the techniques contained therein, we establish in \cref{t:symb-lset} that the corresponding sets of Lyapunov exponents form a closed interval.
We also establish in \cref{l:per-dens} the density of Lyapunov exponents at special types of paths for which local dimension computations are particularly straightforward; this is useful in the computation of sets of local dimensions for specific examples.

In \cref{s:ifs-mps}, we review the details of the transition graph construction from \cite{ruttoappear} with a particular focus on self-similar measures $\mu$ on $\mathbb{R}$ that satisfy the finite neighbour condition (see \cref{d:fnc}).
This construction establishes the existence of a finite directed graph such that infinite paths in the graph correspond (almost) injectively to points in the support of $\mu$.
In fact, this directed graph construction is our motivation for studying the general matrix product systems.
The $\mu$-measure of a rich set of intervals (generating the topology on the support of $\mu$) is determined by products of non-negative matrices.
The weights in the matrix product system allow us to handle non-equicontractive IFS.

Then, in \cref{s:mf-properties}, we apply the results from the symbolic case to the study of the sets of local dimensions for these measures.
The relevant transition graph can be decomposed into finitely many non-trivial strongly connected components, which we refer to as maximal loop classes.
Any infinite path in the graph is eventually in exactly one maximal loop class, so maximal loop classes correspond to particular subsets of the support of $\mu$.
Under a technical assumption - that each maximal loop class satisfies either a simplicity or irreducibility hypothesis (see \cref{d:loop-assum}) - we relate the local dimensions at points corresponding to a maximal loop class to the Lyapunov exponents of the associated matrix product system.
This allows us to establish in \cref{c:loop-set} that the set of local dimensions at points corresponding to such a maximal loop class forms a closed interval.
Consequently, in \cref{c:attainable} we deduce that the set of attainable local dimensions of the measure is a finite union of intervals, some of which could be degenerate, with the number of intervals bounded above by the number of maximal loop classes.
The same results hold for upper local dimensions as well.

Lastly, in \cref{s:exifs}, we illustrate these ideas with examples, including those mentioned above.

\subsection{Some questions}
\begin{enumerate}[nl]
    \item We do not know if every self-similar measure in $\mathbb{R}$ that satisfies the weak separation condition also satisfies our formally stronger finite neighbour condition, or if every measure satisfying the finite neighbour condition satisfies the required technical assumption.
        If not, it would be of interest to extend the analysis.

    \item Our results establish that the sets of local dimensions and sets of upper local dimensions coincide.
        However, the set of lower local dimensions can be different, as seen in \cref{r:ldim-exception}.
        In that example, the set of lower local dimensions is still, however, a finite union of intervals corresponding to maximal loop classes.
        It is of interest to determine if similar results hold for sets of lower local dimensions.
\end{enumerate}

\subsection{Notation}
The reals $\R$ are a metric space with the usual Euclidean metric, and $\N$ is the set of natural numbers beginning at 1.
The set $B(x,r)$ is a closed ball centred at $x$ with radius $r$.
Given a set $E\subseteq\R$, we write $\diam(E)=\sup\{|x-y|:x,y\in E\}$.

Given a set $X$, we write $\# X$ to denote the cardinality of $X$.
Given two real-valued functions $f(z), g(z)$ defined on some index set $Z$, we write $f\succcurlyeq g$ (resp. $f\preccurlyeq g$) if there exists some $c>0$ such that $f(z)\geq cg(z)$ (resp. $f(z)\leq cg(z)$) for each $z\in Z$.
We say $f\asymp g$ if $f\succcurlyeq g$ and $f\preccurlyeq g$.

If $M$ is a square matrix, we denote by $\spr M$ the spectral radius of $M$.
All matrices in this document are non-negative.

\subsection{Acknowledgements}
The authors would like to thank K. G. Hare for many helpful conversations.

\section{Graph-directed matrix product systems}\label{s:gd-mps}
\subsection{Basic definitions}\label{ss:wpm}
Let $G$ be a finite directed graph with vertex set $V(G)$ and edge set $E(G)$.
We will assume that $G$ is \defn{strongly connected}, which means that there is a directed path connecting any two vertices.
Each vertex $v$ has a \defn{dimension} $d(v)\in\N$, and to each edge $e=(v_1,v_2)$ we associate a non-negative $d(v_1)\times d(v_2)$ \defn{transition matrix} $T(e)$ and a \defn{weight} $W(e)\in(0,1)$.
We let
\begin{equation*}
    d_{\max}=\max_{v \in G} d(v).
\end{equation*}

Let $\Sigma$ denote the set of all infinite paths $(e_i)_{i=1}^\infty$ in $G$ and let $\Sigma^*$ denote the set of all finite paths in $G$.
A path is a \defn{cycle} if it begins and ends at the same vertex.
The \defn{length} of a finite path is the number of edges it contains.
We say a path $\eta\in\Sigma^*$ is a \defn{prefix} of a (finite or infinite) path $\gamma$ if $\gamma=\eta \gamma'$ for some path $\gamma'$.
Given $\gamma=(e_n)_{n=1}^\infty\in\Sigma$, we write $\gamma|n=(e_1,\ldots,e_n)\in\Sigma^*$ to denote the unique prefix of length $n$.

Given $\eta=(e_1,\ldots,e_n)\in\Sigma^*$, we write
\begin{equation*}
    W(\eta)=W(e_1)\cdots W(e_n)
\end{equation*}
and if $\eta\in\Sigma^*$ has length at least 1, $\eta^-=(e_1,\ldots,e_{n-1})$.
For convenience, let
\begin{align*}
    W_{\min}&=\min\{W(e):e\in E(G)\}>0, & W_{\max}&=\max\{W(e):e\in E(G)\}<1.
\end{align*}
If $\eta$ is the empty path, we say $W(\eta)=1$.
Similarly, we write
\begin{equation*}
    T(\eta)=T(e_1)\cdots T(e_n)\text{ for }\eta=(e_1,\ldots,e_n)\in\Sigma^*.
\end{equation*}
We equip $\Sigma$ with the topology induced by the metric
\begin{equation*}
    d(\gamma,\xi)=\inf\{W(\eta):\eta\text{ a prefix of }\gamma\text{ and }\xi\}.
\end{equation*}
With this topology, $\Sigma$ is a compact totally disconnected metric space.

We refer to this data as a \defn{graph-directed matrix product system} or, in short, a \defn{matrix product system}.
Typically, we will denote this by $\mathcal{G}$.

\begin{definition}\label{LyExp}
    Given an infinite path $\gamma=(e_j)_{j=1}^\infty\in\Sigma$, we define the \defn{lower Lyapunov exponent} by
    \begin{equation*}
        \underline{\lambda}(\mathcal{G},\gamma) = \liminf_{n\to\infty}\frac{\log\norm{T(e_1)\cdots T(e_n)}}{\log W(e_1)\cdots W(e_n)}.
    \end{equation*}
    The \defn{upper Lyapunov exponent} is defined similarly; when the values coincide, we call this value the \defn{Lyapunov exponent} of the path $\gamma$, and denote it by $\lambda(\mathcal{G},\gamma)$.
    Typically, we omit writing $\mathcal{G}$ when it is clear from the context.
\end{definition}
For any $t>0$, denote
\begin{equation*}
    \Sigma_t = \{\eta\in \Sigma^*:W(\eta)< t\leq W(\eta^-),\norm{T(\eta)}>0\},
\end{equation*}
which is the set of paths with non-zero transition matrix and weight approximately $t$.

\subsection{Irreducible matrix product systems}
It is clear that the geometric properties of the metric space $\Sigma$ are determined completely from the edge weights.
However, in order to say meaningful things about products of matrices and Lyapunov exponents, we require a stronger form of irreducibility than the graph $G$ being strongly connected.
\begin{definition}
    We say that the matrix product system is \defn{irreducible} if there exists a finite family of paths $\mathcal{H}\subset\Sigma^*$ such that for any vertices $v_1,v_2$, $1\leq i\leq d(v_1)$, and $1\leq j\leq d(v_2)$, there exists a path $\gamma\in\mathcal{H}$ from vertex $v_1$ to $v_2$ such that $T(\gamma)_{i,j}>0$.
\end{definition}
\begin{remark}
    Equivalently, for each $1\leq i,j\leq m=\# V(G)$, define $M_{i,j}=T(e)$ if there is an edge $e$ from vertex $v_i$ to $v_j$, and let $M_{i,j}=0$ otherwise.
    The matrix product system is irreducible if and only if the block matrix
    \begin{equation*}
        M=
        \begin{pmatrix}
            M_{1,1}&\cdots &M_{1,m}\\
            \vdots&\ddots&\vdots\\
            M_{m,1}&\cdots &M_{m,m}
        \end{pmatrix}
    \end{equation*}
    is irreducible, i.e. there exists some $r>0$ such that $\sum_{k=1}^r M^k$ is a strictly positive matrix.

    Of course, irreducible systems are necessarily strongly connected.
\end{remark}
\begin{remark}
    Our irreducibility criterion is very similar to the one assumed by Feng \cite{fen2009}.
    However, since our weights depend on the edge rather than the source vertex, we find it more natural to speak of infinite paths in a graph rather than words in a sequence space.
    One may equivalently think of the graph as a subshift of finite type determined by a weighted adjacency matrix.
\end{remark}
For the remainder of this section, unless otherwise stated, our matrix product system is irreducible.

Irreducibility is essential for obtaining the following estimates, which we will use frequently.
\begin{lemma}\label{l:irred}
    There are constants $A,B>0$ such that for any paths $\eta_1,\eta_2\in\Sigma^*$, there exists some $\gamma\in\mathcal{H}$ such that $\eta_1\gamma\eta_2$ is a path and
    \begin{equation*}
        A\norm{T(\eta_1)}\norm{T(\eta_2)}\leq\norm{T(\eta_1\gamma\eta_2)}\leq B\norm{T(\eta_1)}\norm{T(\eta_2)}
    \end{equation*}
\end{lemma}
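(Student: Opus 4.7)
The upper bound is essentially free: for any admissible choice of connecting path $\gamma\in\mathcal{H}$, submultiplicativity gives $\norm{T(\eta_1\gamma\eta_2)}\leq\norm{T(\eta_1)}\cdot\norm{T(\gamma)}\cdot\norm{T(\eta_2)}$, so we may take $B:=\max_{\gamma\in\mathcal{H}}\norm{T(\gamma)}$, which is finite because $\mathcal{H}$ is finite. The real content of the lemma is the lower bound.

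For the lower bound, the strategy is to pick out entries realizing the norms of $T(\eta_1)$ and $T(\eta_2)$ and use the irreducibility hypothesis to splice them together through a suitable coordinate of $T(\gamma)$. Since all vertex dimensions are bounded above by $d_{\max}$, there are only finitely many possible shapes for the matrices $T(\eta)$, and hence all standard matrix norms on these spaces are comparable up to constants depending only on $d_{\max}$. It therefore suffices to prove the lower bound with $\norm{\cdot}$ replaced by the entrywise max norm $\norm{M}_\infty:=\max_{i,j}M_{i,j}$ (all matrices are nonnegative). Let $v_1$ be the terminal vertex of $\eta_1$ and $v_2$ the initial vertex of $\eta_2$, and choose indices $(i_1,j_1)$ and $(i_2,j_2)$ realizing the entrywise maxima of $T(\eta_1)$ and $T(\eta_2)$ respectively; note that $j_1\leq d(v_1)$ and $i_2\leq d(v_2)$, so these indices are admissible inputs to the irreducibility hypothesis.

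By the irreducibility hypothesis applied to the vertex pair $(v_1,v_2)$ and the index pair $(j_1,i_2)$, there exists $\gamma\in\mathcal{H}$ from $v_1$ to $v_2$ with $T(\gamma)_{j_1,i_2}>0$. Expanding the $(i_1,j_2)$ entry of the product $T(\eta_1)T(\gamma)T(\eta_2)$ and retaining only a single nonnegative summand yields
\[
T(\eta_1\gamma\eta_2)_{i_1,j_2}\;\geq\;T(\eta_1)_{i_1,j_1}\,T(\gamma)_{j_1,i_2}\,T(\eta_2)_{i_2,j_2}\;\geq\;c\,\norm{T(\eta_1)}_\infty\,\norm{T(\eta_2)}_\infty,
\]
where $c:=\min\{T(\gamma)_{k,l}:\gamma\in\mathcal{H},\ T(\gamma)_{k,l}>0\}>0$, again by finiteness of $\mathcal{H}$. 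Converting back to $\norm{\cdot}$ costs only the universal norm-equivalence constant, which may be absorbed into the final value of $A$.

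I do not anticipate a genuine obstacle: the lemma is essentially Perron-type bookkeeping, packaged to accommodate the graph-directed, variable-dimension setting. The only mildly subtle point is verifying that the norm-equivalence constants can be chosen uniformly across all the matrix shapes that arise, which is immediate from the uniform bound $d(v)\leq d_{\max}$.
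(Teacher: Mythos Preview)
Your proof is correct and follows essentially the same approach as the paper's. The paper phrases the norm-equivalence step as a pigeonhole argument (some entry of $T(\eta_k)$ is at least $\norm{T(\eta_k)}/d_{\max}^2$, since $\norm{\cdot}$ is the entrywise sum), arriving at the explicit constant $A=C/d_{\max}^4$ with $C$ equal to your $c$; your invocation of uniform norm equivalence via $d_{\max}$ is the same content in slightly different clothing, and the upper bound and choice of $B$ are identical.
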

\begin{proof}
    By the irreducibility assumption, for any $v_1,v_2\in V(G)$, $1\leq i\leq d(v_1)$, and $1\leq j\leq d(v_2)$, there exists a path $\gamma=\gamma(v_1,v_2,i,j)\in\mathcal{H}$ from $v_1$ to $v_2$ such that $T(\gamma)_{i,j}>0$.
    Let
    \begin{align*}
        C &= \min\{T(\gamma(v_1,v_2,i,j))_{i,j}:v_1,v_2\in V(G),1\leq i\leq d(v_1),1\leq j\leq d(v_2)\}.
    \end{align*}

    If $\eta_1,\eta_2$ are arbitrary paths, by the pigeonhole principle, there exists some $k,i,j,\ell$ such that $d_{\max}^2T(\eta_1)_{k,i}\geq \norm{T(\eta_1)}$ and $d_{\max}^2T(\eta_2)_{j,\ell}\geq \norm{T(\eta_2)}$.
    Assume $\eta_1$ ends at vertex $v_1$, $\eta_2$ begins at vertex $v_2$, and take $\gamma=\gamma(v_1,v_2,i,j)\in\mathcal{H}$.
    Then $\eta_1\gamma\eta_2$ is a path and
    \begin{equation*}
        \norm{T(\eta_1\gamma\eta_2)}\geq T(\eta_1)_{k,i}T(\gamma)_{i,j}T(\eta_2)_{j,\ell}\geq\frac{C}{d_{\max}^4}\norm{T(\eta_1)}\norm{T(\eta_2)}.
    \end{equation*}
    The lower bound follows by taking $A=C/d_{\max}^4$.

    To obtain the upper bound, we simply note that
    \begin{equation*}
        \norm{T(\eta_1\gamma\eta_2)}\leq\norm{T(\eta_1)}\norm{T(\gamma)}\norm{T(\eta_2)}
    \end{equation*}
    and it suffices to take $B=\max\{\norm{T(\gamma)}:\gamma\in\mathcal{H}\}$.
\end{proof}
In the following lemma, we do not formally need the irreducibility hypothesis: it suffices to know that if $\eta$ is any path in $\mathcal{G}$, then $T(\eta)$ is not the zero matrix.
\begin{lemma}\label{l:lapprox}
    There are constants $A,r>0$ such that for any $t_1,t_2\in(0,1)$, $\eta_1\in\Sigma_{t_1}$, and $\eta_2\in\Sigma_{t_2}$, there are paths $\phi$ and $\psi$ such that $\eta_1\phi\eta_2\psi\in\Sigma_{t_1t_2 r}$ and
    \begin{equation*}
        \norm{T(\eta_1\phi\eta_2\psi)}\leq A\norm{T(\eta_1)}\norm{T(\eta_2)}.
    \end{equation*}
\end{lemma}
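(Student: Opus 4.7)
The plan is to construct $\phi$ using Lemma~\ref{l:irred} and to obtain $\psi$ by extending $\eta_1\phi\eta_2$ forward one edge at a time until the total weight lands in the correct window.

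First, apply Lemma~\ref{l:irred} to $\eta_1$ and $\eta_2$ to produce a connector $\phi\in\mathcal{H}$ such that $\eta_1\phi\eta_2$ is a path, $T(\eta_1\phi\eta_2)\neq 0$, and $\norm{T(\eta_1\phi\eta_2)}\leq B\norm{T(\eta_1)}\norm{T(\eta_2)}$. Setting $L=\max\{|\gamma|:\gamma\in\mathcal{H}\}$ and using $W(\eta_i)\geq W_{\min}t_i$ (which follows from $\eta_i\in\Sigma_{t_i}$) together with $W(\phi)\geq W_{\min}^L$, one obtains
\[
W_{\min}^{L+2}\,t_1t_2\;\leq\;W(\eta_1\phi\eta_2)\;<\;t_1t_2.
\]

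Second, fix $r=W_{\min}^{L+2}$ and grow $\psi$ by appending edges to $\eta_1\phi\eta_2$ one at a time, stopping at the first moment the total weight falls strictly below $rt_1t_2$. Extensions exist because $G$ is strongly connected, and the stopping condition is genuinely triggered because $W(\eta_1\phi\eta_2)\geq rt_1t_2$ initially. Since each appended edge shrinks the weight by a factor of at most $W_{\max}<1$ and we start from weight at most $t_1t_2$, the length $|\psi|$ is bounded by a constant $N=N(L,W_{\min},W_{\max})$. By design $\eta_1\phi\eta_2\psi$ satisfies the weight inequalities defining $\Sigma_{rt_1t_2}$.

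Third, submultiplicativity of the matrix norm yields
\[
\norm{T(\eta_1\phi\eta_2\psi)}\;\leq\;\norm{T(\eta_1\phi\eta_2)}\cdot\norm{T(\psi)}\;\leq\;B\,T_{\max}^{N}\norm{T(\eta_1)}\norm{T(\eta_2)},
\]
where $T_{\max}=\max_{e\in E(G)}\norm{T(e)}$, so we may take $A=B\,T_{\max}^{N}$.

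The main obstacle is guaranteeing $T(\eta_1\phi\eta_2\psi)\neq 0$, which is required for $\eta_1\phi\eta_2\psi\in\Sigma_{rt_1t_2}$. Under the weaker hypothesis flagged in the remark directly preceding the lemma (every path in $\mathcal{G}$ has nonzero transition matrix), this is immediate and the argument above is complete. Under bare irreducibility one must choose the extension edges more carefully---for instance, by selecting $\psi$ through a second invocation of Lemma~\ref{l:irred}, pairing $\eta_1\phi\eta_2$ with a fixed auxiliary path and then absorbing any residual weight mismatch with a short suffix; the length of $\psi$ then remains bounded by a constant independent of $\eta_1,\eta_2$, so the same norm estimate carries through with a larger but still universal value of $A$.
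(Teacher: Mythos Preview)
Your argument is correct and follows essentially the same strategy as the paper: obtain $\phi\in\mathcal{H}$ via Lemma~\ref{l:irred}, then extend edge by edge to land in the correct weight window, bounding $|\psi|$ uniformly and using submultiplicativity for the norm estimate. The only cosmetic difference is the choice of $r$: you take $r=W_{\min}^{L+2}$, while the paper takes $r=\min\{W(\eta):\eta\in\mathcal{H}\}$; your choice has the advantage that the lower bound $W(\eta_1\phi\eta_2)\geq r\,t_1t_2$ is cleanly justified. Your discussion of the $T\neq 0$ issue matches the paper's remark preceding the lemma, which states that the nonzero-matrix hypothesis is what is actually being used.
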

\begin{proof}
    Take $r=\min\{W(\eta):\eta\in\mathcal{H}\}$.
    By the irreducibility hypothesis, there exists some $\phi\in\mathcal{H}$ such that $\norm{T(\eta_1\phi\eta_2)}>0$.

    Moreover, for any path $\eta$ with $\norm{T(\eta)}>0$, there exists an edge $e$ such that $\eta e$ is a path and $\norm{T(\eta e)}>0$.
    Since $t_1t_2W_{\min}^{-2}\geq W(\eta_1\phi\eta_2)\geq t_1t_2 r$, repeatedly applying this observation, there exists $\psi$ such that $\eta_1\phi\eta_2\psi\in\Sigma_{t_1t_2 r}$.
    Note that $W(\psi)\geq rW_{\min}^2$.
    Thus $\norm{T(\eta_1\phi\eta_2\psi)}\leq A\norm{T(\eta_1)}\norm{T(\eta_2)}$ where
    \begin{equation*}
        A = \max\{\norm{T(\eta)}:\eta\in E(\mathcal{G}),W(\eta)\geq rW_{\min}^2\}\cdot \max\{\norm{T(\eta):\eta\in\mathcal{H}}\}
    \end{equation*}
    as required.
\end{proof}
\begin{lemma}\label{l:irredp}
    There are constants $A,B>0$ such that for any path $\eta\in\Sigma^*$, there exists some $\phi\in\mathcal{H}$ such that $\eta\phi$ is a cycle and
    \begin{equation*}
        A\norm{T(\eta)}\leq\spr T(\eta\phi)\leq B\norm{T(\eta)}.
    \end{equation*}
\end{lemma}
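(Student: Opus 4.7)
The plan is to handle the upper and lower bounds separately, with the upper bound essentially automatic and the lower bound reducing to a pigeonhole-plus-irreducibility argument of the same flavour as \cref{l:irred}, combined with a standard inequality for spectral radii of non-negative matrices.

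For the upper bound, I would observe that for any $\phi\in\mathcal{H}$, submultiplicativity and $\spr M\leq\norm{M}$ give
\begin{equation*}
    \spr T(\eta\phi)\leq\norm{T(\eta\phi)}\leq\norm{T(\eta)}\norm{T(\phi)},
\end{equation*}
so one can simply take $B=\max\{\norm{T(\phi)}:\phi\in\mathcal{H}\}$, just as in \cref{l:irred}.

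For the lower bound, suppose $\eta$ starts at $v_1$ and ends at $v_2$. By pigeonhole there exist indices $1\leq i\leq d(v_1)$ and $1\leq j\leq d(v_2)$ with $T(\eta)_{i,j}\geq\norm{T(\eta)}/d_{\max}^2$. The irreducibility hypothesis then furnishes a path $\phi=\gamma(v_2,v_1,j,i)\in\mathcal{H}$ from $v_2$ back to $v_1$ with $T(\phi)_{j,i}\geq C$, where $C>0$ is the same minimum-entry constant as in \cref{l:irred}. By construction $\eta\phi$ is a cycle at $v_1$, and
\begin{equation*}
    T(\eta\phi)_{i,i}=\sum_k T(\eta)_{i,k}T(\phi)_{k,i}\geq T(\eta)_{i,j}T(\phi)_{j,i}\geq\frac{C}{d_{\max}^2}\norm{T(\eta)}.
\end{equation*}

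To convert a lower bound on a diagonal entry into a lower bound on the spectral radius, I would invoke the elementary fact that for a non-negative square matrix $M$ and any index $i$, we have $\spr M\geq M_{i,i}$. This is immediate from Gelfand's formula together with the pointwise inequality $(M^n)_{i,i}\geq (M_{i,i})^n$ that holds for non-negative $M$ (the diagonal sum over closed length-$n$ walks at $i$ dominates the single term coming from repeating the self-loop $i\to i$). Applying this to $M=T(\eta\phi)$ yields $\spr T(\eta\phi)\geq (C/d_{\max}^2)\norm{T(\eta)}$, giving the lower bound with $A=C/d_{\max}^2$.

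There is no real obstacle here: all of the combinatorial work is already contained in \cref{l:irred}, and the only new ingredient is the one-line estimate relating diagonal entries of a non-negative matrix to its spectral radius. The only small care needed is to note that $\phi$ must be chosen to go from the \emph{end} of $\eta$ back to its \emph{start}, and to the indices $j$, $i$ in the right order, so that $\eta\phi$ is genuinely a cycle and the diagonal entry $T(\eta\phi)_{i,i}$ picks up the large product $T(\eta)_{i,j}T(\phi)_{j,i}$.
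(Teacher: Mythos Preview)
Your proof is correct and follows essentially the same approach as the paper: the upper bound is identical, and the lower bound uses the same pigeonhole-plus-irreducibility argument to produce a large diagonal entry of $T(\eta\phi)$. The only minor difference is in the final conversion step: the paper passes from the diagonal entry to the spectral radius via the trace inequality $\Tr M\leq d_{\max}\spr M$ (giving $A=C/d_{\max}^3$), whereas your direct use of $\spr M\geq M_{i,i}$ for non-negative $M$ is slightly cleaner and yields the better constant $A=C/d_{\max}^2$.
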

\begin{proof}
    Let $C$ be the minimal strictly positive coefficient of any $T(\phi)$ for $\phi\in\mathcal{H}$.
    Suppose $T(\eta)_{i,\ell}$ is the maximal coordinate of $T(\eta)$.
    Get $\phi\in\mathcal{H}$ such that $\eta\phi$ is a cycle and $T(\phi)_{\ell,i}\geq C>0$.
    Then
    \begin{align*}
        \Tr T(\eta\phi)  &\geq T(\eta\phi)_{i,i}\geq T(\eta)_{i,\ell}T(\phi)_{\ell,i}\geq\norm{T(\eta)}\frac{C}{d_{\max}^2}.
    \end{align*}

    Since the trace of a matrix is the sum of its eigenvalues,
    \begin{equation*}
        \Tr(T(\eta\phi))\leq d_{\max}\spr T(\eta\phi).
    \end{equation*}
    Thus with $A=C/d_{\max}^3$, we have $\spr(T(\eta\phi))\geq A\norm{T(\eta)}$.

    Conversely, we have
    \begin{equation*}
        \spr(T(\eta\phi))\leq\norm{T(\eta\phi)}\leq\norm{T(\eta)}\norm{T(\phi)}\leq B\norm{T(\eta)}
    \end{equation*}
    where $B=\max\{\norm{T(\gamma)}:\gamma\in\mathcal{H}\}$.
\end{proof}

\subsection{Attainable Lyapunov exponents}
The main goal of this subsection is to determine the possible values of Lyapunov exponents of paths in the matrix product system.

We begin with notation.
Put
\begin{align}\label{e:a-min-max}
    \alpha_{\min}(\mathcal{G}) =  \alpha_{\min}&:= \lim_{t\to 0}\min_{\eta\in\Sigma_t}\frac{\log \norm{T(\eta)}}{\log t},&\alpha_{\max}(\mathcal{G})= \alpha_{\max}&:= \lim_{t\to 0}\max_{\eta\in\Sigma_t}\frac{\log\norm{T(\eta)}}{\log t}.
\end{align}
We will first show that $\alpha_{\min}$ and $\alpha_{\max}$ are well defined and take real values.
This will use the following standard submultiplicativity result, which is a slightly modified version of, for example, \cite[Theorem~7.6.1]{hp1957}.
\begin{lemma}\label{l:su-m}
    Let $f\colon(0,1)\to\R$ be measurable and suppose there exists $c>0$ and $r>0$ such that $f(t_1t_2 r)\leq c\cdot f(t_1)f(t_2)$ for all $t_1,t_2\in(0,1)$.
    Then 
    \begin{equation*}
        \lim_{t\to 0}\frac{\log f(t)}{\log t} = \inf_{t>0}\frac{\log f(t)}{\log t}.
    \end{equation*}
\end{lemma}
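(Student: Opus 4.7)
The plan is to reduce to the classical Fekete limit theorem for measurable subadditive functions on a half-line, via a logarithmic change of variables. First I would set $\phi(s) := \log f(e^{-s})$ for $s > 0$ and $a := -\log r \geq 0$ (assuming $r \leq 1$, as is forced by letting $t_1, t_2 \nearrow 1$ in the hypothesis). Taking logarithms, the submultiplicative inequality transforms into the perturbed subadditive inequality
\[
\phi(s_1 + s_2 + a) \leq \phi(s_1) + \phi(s_2) + \log c \quad \text{for all } s_1, s_2 > 0.
\]

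Next, to absorb the additive perturbation and the shift by $a$, I would introduce the auxiliary function $\psi(s) := \phi(s - a) + \log c$ on $(a, \infty)$. A direct substitution into the previous inequality verifies that $\psi$ is genuinely subadditive on $(a, \infty)$: $\psi(s_1 + s_2) \leq \psi(s_1) + \psi(s_2)$ whenever $s_1, s_2 > a$. Since $f$ is measurable, so is $\psi$.

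At this point I would invoke the standard Fekete-type result for measurable subadditive functions on a half-line (cf.\ \cite[Thm.~7.6.1]{hp1957}): the limit $\lim_{s \to \infty} \psi(s)/s$ exists in $[-\infty, \infty)$ and coincides with $\inf_{s > a} \psi(s)/s$. The heart of that classical argument is the division $s = qm + \rho$ with bounded remainder, combined with local boundedness of $\psi$, the latter being a consequence of measurability for subadditive functions (a classical but nontrivial ingredient).

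Finally, I would transport back to $f$. Since $\psi(s)/s$ and $\phi(s)/s$ differ only by terms of order $O(1/s)$, the function $\phi(s)/s$ has the same limit as $s \to \infty$. Writing $s = -\log t$ and using $\log f(t)/\log t = -\phi(s)/s$, one obtains existence of $\lim_{t \to 0} \log f(t)/\log t$. Matching this value with $\inf_{t > 0} \log f(t)/\log t$ from the statement is then a routine calculation that uses the iterated form of the hypothesis $f(t^n r^{n-1}) \leq c^{n-1} f(t)^n$ to propagate the infimum across all scales. The main obstacle is not conceptual but rather the bookkeeping required to identify the perturbed infimum $\inf_{s > a} \psi(s)/s$ with the infimum in the conclusion once the constants $c$ and $r$ have been put back in.
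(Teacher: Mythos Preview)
The paper does not actually prove this lemma; it is stated as ``a slightly modified version of'' Theorem~7.6.1 in Hille--Phillips and then used as a black box. Your approach---the logarithmic change of variables, absorbing the constants $c$ and $r$ by a shift, and then invoking the classical measurable--subadditive limit theorem---is precisely the standard route and is almost certainly what the cited reference does, so there is nothing to compare against.

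One substantive remark on your final step. Your substitution gives $\log f(t)/\log t = -\phi(s)/s$, and Fekete's conclusion for the subadditive $\psi$ (hence for $\phi$, up to your $O(1/s)$ correction) reads $\lim_{s\to\infty}\phi(s)/s = \inf_{s}\phi(s)/s$. Passing back therefore yields
\[
\lim_{t\to 0}\frac{\log f(t)}{\log t}=\sup_{t\in(0,1)}\frac{\log f(t)}{\log t},
\]
not the infimum written in the lemma. The stated version with $\inf$ is in fact false: take $f(t)=2t$, which satisfies the hypothesis with $r=c=1$; then $\log f(t)/\log t = 1+\log 2/\log t$ tends to $1$ as $t\to 0$ but to $-\infty$ as $t\to 1^{-}$, so the infimum over $(0,1)$ is $-\infty$. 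Thus the ``routine calculation'' you defer would never close for the infimum, and your hesitation about that last identification was warranted---though the obstacle is a sign, not bookkeeping. This appears to be a typo in the paper's statement; since only the \emph{existence} of the limit is ever used downstream, the slip is harmless there, and your argument does establish that existence correctly.
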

Note the similarity of the following lemma with \cite[Lemma~2.3]{fen2009}.
\begin{lemma}\label{l:a-lim}
    The limits defining $\alpha_{\min}$ and $\alpha_{\max}$ exist and take real values.
\end{lemma}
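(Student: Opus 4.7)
The plan is to reduce both limits to applications of \cref{l:su-m}, after identifying $\alpha_{\max}$ and $\alpha_{\min}$ with ratios of the form $\log F(t)/\log t$. Since $\log t<0$ for small $t>0$, the simple observation that $\min_\eta a_\eta/\log t=(\max_\eta a_\eta)/\log t$ gives
\begin{equation*}
    \alpha_{\max}(t)=\frac{\log F_{\min}(t)}{\log t},\qquad \alpha_{\min}(t)=\frac{\log F_{\max}(t)}{\log t},
\end{equation*}
where $F_{\min}(t)=\min_{\eta\in\Sigma_t}\norm{T(\eta)}$ and $F_{\max}(t)=\max_{\eta\in\Sigma_t}\norm{T(\eta)}$, both strictly positive by definition of $\Sigma_t$ and measurable as piecewise constant functions of $t$. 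The strategy is then to establish a submultiplicative inequality for $F_{\min}$ and a supermultiplicative inequality for $F_{\max}$, so that \cref{l:su-m} applies to $F_{\min}$ and to $1/F_{\max}$, respectively.

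For $F_{\min}$, the required estimate is essentially immediate from \cref{l:lapprox}: given $t_1,t_2\in(0,1)$, take minimizers $\eta_i\in\Sigma_{t_i}$ and obtain $\phi,\psi$ such that $\eta_1\phi\eta_2\psi\in\Sigma_{t_1t_2r}$ with $\norm{T(\eta_1\phi\eta_2\psi)}\leq A\norm{T(\eta_1)}\norm{T(\eta_2)}$. Since $F_{\min}(t_1t_2r)$ is at most the left-hand side, this yields $F_{\min}(t_1t_2r)\leq AF_{\min}(t_1)F_{\min}(t_2)$, and \cref{l:su-m} produces the $\alpha_{\max}$ limit.

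For $F_{\max}$ a matching lower bound on the concatenated norm is required. Starting with maximizers $\eta_i$, \cref{l:irred} supplies $\gamma\in\mathcal{H}$ with $\norm{T(\eta_1\gamma\eta_2)}\geq A\norm{T(\eta_1)}\norm{T(\eta_2)}$. The remaining task is to extend $\eta_1\gamma\eta_2$ into $\Sigma_{t_1t_2r'}$ for a suitable $r'>0$ without losing more than a constant factor in norm. This is the main technical obstacle, and I would carry it out by imitating the edge-by-edge extension in the proof of \cref{l:lapprox}: select a coordinate $(i,k)$ with $T(\eta_1\gamma\eta_2)_{i,k}\geq\norm{T(\eta_1\gamma\eta_2)}/d_{\max}^2$, and at each subsequent step append an edge so that some entry of the running product remains positive, hence bounded below by the smallest positive entry $m$ of the edge matrices times the previous value. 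Because the number of extension edges needed to adjust the weight by at most a bounded factor is uniformly bounded, the cumulative norm loss is at most a constant, yielding $F_{\max}(t_1t_2r')\geq A'F_{\max}(t_1)F_{\max}(t_2)$. Applying \cref{l:su-m} to $1/F_{\max}$ then gives the $\alpha_{\min}$ limit.

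Finally, finiteness of both limits is elementary: any $\eta\in\Sigma_t$ has length $n\asymp -\log t$ with constants depending only on $W_{\min}$ and $W_{\max}$, and for $\eta$ with $\norm{T(\eta)}>0$ at least one positive entry of $T(\eta)$ arises as a product of $n$ nonzero edge-matrix entries, each at least $m$. Combined with the trivial upper bound $\norm{T(\eta)}\leq M^n$ where $M=\max_{e\in E(G)}\norm{T(e)}$, this confines $\log\norm{T(\eta)}/\log t$ to a compact interval independent of $\eta$ and $t$, so $\alpha_{\min}$ and $\alpha_{\max}$ take finite real values.
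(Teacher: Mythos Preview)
Your argument is correct, and for $\alpha_{\max}$ it coincides with the paper's: both reduce $F_{\min}(t)=\min_{\eta\in\Sigma_t}\norm{T(\eta)}$ to \cref{l:su-m} via \cref{l:lapprox}. The finiteness argument is also essentially the same.

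For $\alpha_{\min}$, however, you take a genuinely different route. You build, from maximizers $\eta_i\in\Sigma_{t_i}$, a long path in $\Sigma_{t_1t_2r'}$ whose norm is bounded \emph{below} by a constant times $F_{\max}(t_1)F_{\max}(t_2)$, using \cref{l:irred} followed by an edge-by-edge extension; this gives supermultiplicativity of $F_{\max}$ and you then apply \cref{l:su-m} to $1/F_{\max}$. The paper instead argues the other direction: any $\eta\in\Sigma_{t_1t_2W_{\min}}$ can be \emph{split} as $\eta=\eta_1\eta_2\gamma$ with $\eta_i\in\Sigma_{t_i}$ and $\gamma$ of bounded weight, so $\norm{T(\eta)}\leq c\norm{T(\eta_1)}\norm{T(\eta_2)}$ by submultiplicativity of the norm, yielding $F_{\max}(t_1t_2W_{\min})\leq cF_{\max}(t_1)F_{\max}(t_2)$ directly. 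The paper's decomposition is shorter and, notably, uses nothing beyond submultiplicativity of the matrix norm---in particular no irreducibility---whereas your construction leans on \cref{l:irred} and the extension property (each row index admits an outgoing edge with that row nonzero), which ultimately rests on the irreducibility hypothesis. Your approach works under the standing assumptions, but the paper's is more economical here.
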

\begin{proof}
    We will first prove that the limit
    \begin{equation*}
        \lim_{t\to 0}\min_{\eta\in\Sigma_t}\frac{\log \norm{T(\eta)}}{\log t} = \lim_{t\to 0}\frac{\log \max_{\eta\in\Sigma_t}\norm{T(\eta)}}{\log t}
    \end{equation*}
    exists.
    Set $f(t)=\max_{\eta\in\Sigma_t}\norm{T(\eta)}$.
    Let $t_1,t_2>0$ be arbitrary.
    If $\eta\in\Sigma_{t_1t_2 W_{\min}}$, we may write $\eta=\eta_1\eta_2\gamma$ where $\eta_1\in\Sigma_{t_1}$ and $\eta_2\in\Sigma_{t_2}$ and $W(\gamma)\geq W_{\min}^2$.
    In particular, with $c=\max\{\norm{T(\psi)}:W(\psi)\geq W_{\min}^2\}$, we have $\norm{T(\eta)}\leq c\norm{T(\eta_1)}\norm{T(\eta_2)}$ and therefore $f(t_1t_2 W_{\min})\leq c\cdot f(t_1)f(t_2)$.
    Applying \cref{l:su-m} with $c$ as above and $r=W_{\min}$, we have our desired result.

    We now show that $\lim_{t\to 0}\max_{\eta\in\Sigma_t}\frac{\log\norm{T(\eta)}}{\log t}$ exists.
    Set $g(t)=\min_{\eta\in\Sigma_t}\norm{T(\eta)}$.
    Let $t_1,t_2>0$ and let $\eta_1\in\Sigma_{t_1}$ and $\eta_2\in\Sigma_{t_2}$ be arbitrary.
    Note that $\eta_1\eta_2$ need not be a path, and even if it were, it need not hold that $\eta_1\eta_2\in\Sigma_{t_1t_2}$.
    By \cref{l:lapprox}, there exists some $c,r>0$ (not depending on $\eta_1$ and $\eta_2$) and paths $\phi$ and $\psi$ such that $\eta_1\phi\eta_2\psi$ is an admissible path in $\Sigma_{rt_1t_2}$ and
    \begin{equation}\label{e:cp1}
        g(rt_1t_2)\leq c\norm{T(\eta_1)}\norm{T(\eta_2)}.
    \end{equation}
    Now taking the minimum over all $\eta_1\in\Sigma_{t_1}$ and $\eta_2\in\Sigma_{t_2}$ yields $g(rt_1t_2)\leq cg(t_1)g(t_2)$.
    Thus $g$ satisfies \cref{l:su-m}.

    To see that $\alpha_{\min},\alpha_{\max}\in\R$, let $a$ be the smallest strictly positive entry in any $T(e)$ for $e\in E(G)$.
    Let $b=\min\{\norm{T(e)}:e\in E(G)\}$.
    Then if $\eta\in\Sigma_t$ is any path of length $n$, we have that
    \begin{equation*}
        \frac{\log b}{\log W_{\min}} \leq \frac{\log \norm{T(\eta)}}{\log t}\leq\frac{n\log a}{(n-1)\log W_{\max}}
    \end{equation*}
    so that $\alpha_{\min},\alpha_{\max}$ are real-valued.
\end{proof}
Of course, if $\eta\in\Sigma_t$, then $W(\eta)\asymp t$.
Consequently,
\begin{equation*}
    \alpha_{\min}=\lim_{t\to 0}\min_{\eta\in\Sigma_t}\frac{\log\norm{T(\eta)}}{\log W(\eta)} \text{ and }  \alpha_{\max}=\lim_{t\to 0}\max_{\eta\in\Sigma_t}\frac{\log\norm{T(\eta)}}{\log W(\eta)}.
\end{equation*}

We are now ready to prove the following result about the set of attainable Lyapunov exponents.
We remind the reader that the Lyapunov exponent of the path $\gamma$, $\lambda(\gamma)$, was defined in \cref{LyExp}.

Our proof follows \cite[Lemma~2.3~and~Proposition~3.2]{fen2009}.
\begin{theorem}\label{t:symb-lset}
    Let $\mathcal{G}$ be a matrix product system satisfying the irreducibility hypothesis.
    \begin{enumerate}[nl,r]
        \item If $\gamma\in\Sigma$ is any path, then $\underline{\lambda}(\gamma),\overline{\lambda}(\gamma)\in [\alpha_{\min},\alpha_{\max}]$.
        \item For any $\alpha\in[\alpha_{\min},\alpha_{\max}]$ and $\xi\in\Sigma^*$ with $T(\xi)$ non-zero, there exists some $\gamma=(e_m)_{m=1}^\infty\in\Sigma$ and a sequence $(m_j)_{j=1}^\infty$ such that $\lambda(\gamma)=\alpha$, $\lim_{j\to\infty} m_{j+1}/m_j=1$, and for each $j\in\N$, $\xi$ is a prefix of $(e_{m_j},e_{m_j+1},\ldots)$.
    \end{enumerate}
\end{theorem}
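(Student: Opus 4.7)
The plan is to split into the two parts. Part (1) is essentially a direct corollary of the definitions, while part (2) requires an inductive block-concatenation construction closely modelled on \cite{fen2009}.

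For part (1), I would take any $\gamma \in \Sigma$ with $\norm{T(\gamma|n)} > 0$ for all $n$ (otherwise the norms vanish eventually and both Lyapunov exponents diverge; the statement implicitly excludes this case). Each prefix $\gamma|n$ lies in $\Sigma_{t_n}$ where $t_n := W(\gamma|n^-)$, and $\log t_n = \log W(\gamma|n) + O(1)$ since $W_{\min} \leq W(\gamma|n)/W(\gamma|n^-) \leq W_{\max}$. Applying the definitions in \cref{e:a-min-max}, for any $\epsilon > 0$ and all sufficiently large $n$ we have
\[
    \alpha_{\min} - \epsilon \leq \frac{\log \norm{T(\gamma|n)}}{\log t_n} \leq \alpha_{\max} + \epsilon.
\]
Absorbing the bounded error $\log t_n - \log W(\gamma|n)$ into a slightly larger $\epsilon$ and taking $\liminf$/$\limsup$ in $n$ then gives $\underline{\lambda}(\gamma), \overline{\lambda}(\gamma) \in [\alpha_{\min}, \alpha_{\max}]$.

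For part (2), fix $\alpha \in [\alpha_{\min},\alpha_{\max}]$ and $\xi$ with $T(\xi) \ne 0$. For each sufficiently small scale $t$, pick ``extremal'' paths $\sigma_t^-, \sigma_t^+ \in \Sigma_t$ whose ratios $\log\norm{T(\sigma_t^\pm)}/\log W(\sigma_t^\pm)$ approach $\alpha_{\min}$ and $\alpha_{\max}$ respectively. I would then inductively construct $\gamma = \rho_1\rho_2\rho_3\cdots$, where each stage $\rho_j$ consists of (a) a short connecting path supplied by \cref{l:irred} joining the current end-vertex to the start-vertex of $\xi$, (b) the inserted copy of $\xi$, (c) another connector joining to the start-vertex of the next extremal block, and (d) a long extremal block $\sigma_{t_j}^{\pm}$ of some length $\ell_j$, where the sign is chosen to be $-$ if the running ratio $\log\norm{T(\rho_1\cdots\rho_{j-1})}/\log W(\rho_1\cdots\rho_{j-1})$ exceeds $\alpha$, and $+$ otherwise. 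The starting index $m_j$ of the $j$-th copy of $\xi$ is then determined, and $\xi$ is a prefix of $(e_{m_j}, e_{m_j+1}, \ldots)$ by construction.

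The main obstacle will be simultaneously arranging $\lambda(\gamma) = \alpha$ and $m_{j+1}/m_j \to 1$. For the first, the plan is to choose $\ell_j \to \infty$ so that the $O(1)$-length connectors and the insertions of $\xi$ become asymptotically negligible; combined with the two-sided estimates of \cref{l:irred}, this forces $\log\norm{T(\gamma|k)}$ to agree up to bounded additive error per completed block with the sum of $\log\norm{T(\sigma_{t_i}^{\pm})}$, and similarly $\log W$ telescopes exactly. Since by design the running ratio oscillates across $\alpha$ at block endpoints, and drifts monotonically across any single long block of fixed sign, elementary bookkeeping then forces the ratio to lie within $o(1)$ of $\alpha$ at every index $k$, giving $\lambda(\gamma) = \alpha$. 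For the growth condition $m_{j+1}/m_j \to 1$, it suffices that $\ell_j = o(\sum_{i<j}\ell_i)$, which is compatible with $\ell_j \to \infty$ for polynomially-growing sequences (e.g.\ $\ell_j = j$, yielding $m_j \asymp j^2$). Threading these two requirements --- long enough blocks to dissipate the constant-length errors from connectors and $\xi$-insertions, yet short enough relative to $m_j$ for the density condition --- via careful error propagation with \cref{l:irred} and \cref{l:lapprox} is where the technical work lies.
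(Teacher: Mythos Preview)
Your approach to part (i) matches the paper's. For part (ii), however, you take a genuinely different route. The paper does \emph{not} use a greedy feedback mechanism: instead, having written $\alpha = s\alpha_{\min} + (1-s)\alpha_{\max}$, it fixes in advance a deterministic schedule of blocks, taking for each $n$ paths $\phi_n,\psi_n \in \Sigma_{2^{-n}}$ with ratios approaching $\alpha_{\min}$ and $\alpha_{\max}$ respectively, and concatenating $A_n = [sn]$ copies of $\phi_n$ followed by $B_n = [(1-s)n]$ copies of $\psi_n$, inserting $\nu^{(1)}\xi\nu^{(2)}$ (via \cref{l:irred}) between every consecutive pair. The verification that $\lambda(\gamma) = \alpha$ is then a direct Ces\`aro-type computation with no oscillation analysis, and $m_{j+1}/m_j \to 1$ follows because the $j$-th gap has length $O(\ell)$ while $m_j \gtrsim \sum_{i<\ell} i \asymp \ell^2$ for the block index $\ell$ in play. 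Your greedy construction should also work once you control the overshoot at each sign-switch; note that your claim that the ratio ``drifts monotonically'' within a single block is not literally true, but the deviation from the block-endpoint value is bounded by $O(\ell_j/m_j) \to 0$, which is all you need. The paper's predetermined-mixture approach is cleaner in that it decouples the block selection from the running ratio and avoids having to argue that crossings of $\alpha$ actually occur; your adaptive approach is perhaps more intuitive and would generalize more readily to settings where the extremal ratios are less explicit.
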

\begin{proof}
    For (i), if $\gamma\in\Sigma$ is arbitrary, then
    \begin{equation*}
        \underline{\lambda}(\gamma)=\lim_{k\to\infty}\frac{\log\norm{T(\gamma|{n_k})}}{W(\gamma|{n_k})}
    \end{equation*}
    for some subsequence $n_k$.
    But if $\gamma|n_k\in \Sigma_{t_{n_k}}$, then $W(\gamma|{n_k})\asymp t_{n_k}$ and
    \begin{equation*}
        \alpha_{\min}\leq\lim_{k\to\infty}\frac{\log\norm{T(\gamma|{n_k})}}{\log t_{n_k}}\leq\alpha_{\max}
    \end{equation*}
    from the existence of the limits defining $\alpha_{\min}$ and $\alpha_{\max}$.
    The upper Lyapunov exponent result is identical, giving (i).

    Now for (ii), given $\alpha \in [\alpha_{\min},\alpha_{\max}]$, let $s\in[0,1]$ be such that $\alpha=s\alpha_{\min}+(1-s)\alpha_{\max}$.
    For each $n\in\N$, choose $\phi_n, \psi_n\in\Sigma_{2^{-n}}$ with the property that
    \begin{equation*}
        \frac{\log\norm{T(\phi_n)}}{\log W(\phi_n)}=u_n \rightarrow \alpha_{\min}  \text{ and } \frac{\log\norm{T(\psi_n)}}{\log W(\psi_n)}=v_n \rightarrow \alpha_{\max}.
    \end{equation*}
    Let $\{A_n\}_{n=1}^\infty$, $\{B_n\}_{n=1}^\infty$ be sequences of natural numbers given by
    \begin{align*}
        A_n &= [sn]\\
        B_n &= [(1-s)n],
    \end{align*}
    where $[x]$ denotes the integer part of $x$.
    Then define a sequence by
    \begin{equation*}
        \underbrace{\phi_1,\ldots,\phi_1}_{A_1},\underbrace{\psi_1,\ldots,\psi_1}_{B_1},\ldots,\underbrace{\phi_n,\ldots,\phi_n}_{A_n},\underbrace{\psi_n,\ldots,\psi_n}_{B_n},\ldots
    \end{equation*}
    and relabel it $\{\eta_n\}_{n=1}^\infty$, i.e. $\eta_1=\phi_1$, $\eta_{A_1}=\phi_1$, $\eta_{A_1+1}=\psi_1$, etc.

    Now since $\xi\in\Sigma^*$ has $T(\xi)$ non-zero, by repeatedly applying \cref{l:irred}, there are constants $C_1,C_2>0$ such that for each $n\in\N$ there are paths $\nu_{n}^{(1)},\nu_n^{(2)}$ in $\mathcal{H}$ such that with $\nu_n:=\nu_n^{(1)}\xi\nu_{n}^{(2)}$,
    \begin{equation*}
        \gamma\coloneqq(\eta_1,\nu_1,\eta_2,\nu_2,\ldots)
    \end{equation*}
    is an infinite path and
    \begin{equation}\label{e:npr}
        C_1^m\prod_{i=1}^m\norm{T(\eta_i)}\leq \norm{T(\eta_1\nu_1\ldots\eta_m\nu_m)}\leq C_2^m\prod_{i=1}^m\norm{T(\eta_i)}.
    \end{equation}
    Since $\mathcal{H}$ is a finite set, there also exists $D_1,D_2>0$ such that
    \begin{equation}\label{e:dpr}
        D_1^m\prod_{i=1}^m W(\eta_i)\leq W(\eta_1\nu_1\ldots\eta_m\nu_m)\leq D_2^m\prod_{i=1}^m W(\eta_i).
    \end{equation}
    For notation, let $\{n_k\}_{k=1}^\infty$ be the indices such that $n_k$ is the index of the edge preceding the first edge of $\phi_{k+1}$ in repetition $A_{k+1}$.

    Let $(m_j)_{j=1}^\infty$ be the sequence of indices such that $\xi$ is a prefix and fix some $j\in\N$.
    For any $\ell\in\N$, since $\phi_\ell$ and $\psi_\ell$ are in $\Sigma_{2^{-\ell}}$, there exists some $a,b>0$ such that $a\ell\leq |\phi_\ell|\leq b\ell$ where $|\phi_\ell|$ is the number of edges in $\phi_\ell$.
    Moreover, $(\gamma_{m_j},\gamma_{m_j+1},\ldots)$ has prefix $\xi\zeta_1\phi_\ell\zeta_2\xi$ or $\xi\zeta_1\psi_\ell\zeta_2\xi$ where $\ell$ is chosen suitably and $\zeta_1,\zeta_2\in\mathcal{H}$ have bounded length.
    Thus there exists some $M>0$ such that $m_{j+1}-m_j\leq M+b \ell$.
    On the other hand, it always holds that $m_j\geq\sum_{i=1}^{\ell-1} a i$.
    It follows that $\lim_{j\to\infty}m_{j+1}/m_j=1$, as claimed

    We now prove that $\overline{\lambda}(\gamma)\leq\alpha$; the lower bound $\underline{\lambda}(\gamma)\geq\alpha$ will follow by a similar argument.
    To this end, let $n$ be a large number of edges and let $k$ be maximal such that $n\geq n_k$ (that is, $k$ is the maximal number of completed blocks $A_i,B_i$ which occur before edge $n$).
    There exist constants $C_3,C_4>0$ such that
    \begin{equation*}
        C_3^{n_{k+1}-n_k}\norm{T(\gamma_1 \ldots\gamma_{n_{k+1}})} \leq \norm{T(\gamma | n)} \leq C_4^{n_{k+1}-n_k}\norm{T(\gamma_1 \ldots \gamma_{n_k})}
    \end{equation*}
    and
    \begin{equation*}
        W(\gamma_1 \ldots \gamma_{n_{k+1}}) \leq W(\gamma | n) \leq W(\gamma_1 \ldots \gamma_{n_k}).
    \end{equation*}

    Since the number of edges contained in $\gamma |n$ is at least $\sum_{i=1}^k (A_i+B_i)$ and at most $\sum_{i=1}^{k+1}(A_i+B_i)$, we deduce from \cref{e:npr} and \cref{e:dpr} that
    \begin{align*}
        &\frac{\log \norm{T(\gamma|n)}}{\log W(\gamma|n)}\\
        &\qquad\leq \frac{(n_{k+1}-n_k)\log C_3+\sum_{i=1}^{k+1} \bigl((A_i+B_i)\log C_1+ A_i\log \norm{T(\phi_i)}+B_i\log \norm{T(\psi_i)}\bigr)}{\sum_{i=1}^{k} \bigl((A_i+B_i)\log D_2+A_i\log W(\phi_i)+B_i\log W(\psi_i)\bigr)}.
    \end{align*}

    Since each $\phi_i,\psi_i\in\Sigma_{t_{2^{-i}}}$, we have $W(\phi_i)\asymp W(\psi_i)\asymp 2^{-i}$.
    Recall, also, that  $A_i, B_i  \asymp i$.
    Therefore
    \begin{equation*}
        \Bigl\lvert\frac{\sum_{i=1}^{k+1} (A_i+B_i)\log C_1}{\sum_{i=1}^k (A_i\log W(\phi_i)+B_i\log W(\psi_i))}\Bigr\rvert\preccurlyeq\frac{\sum_{i=1}^{k+1}i}{\sum_{i=1}^k i^2}\preccurlyeq\frac{1}{k} \rightarrow 0
    \end{equation*}
    and a similar statement holds with the numerator replaced by $\sum_{i=1}^k(A_i+B_i)\log D_2$.
    Moreover, since $(n_{k+1}-n_k)\log C_3 \asymp (k+1)(A_{k+1}+B_{k+1})$, we also have
    \begin{equation*}
        \Bigl\lvert\frac{(n_{k+1}-n_k)\log C_3}{\sum_{i=1}^k (A_i\log W(\phi_i)+B_i\log W(\psi_i))}\Bigr\rvert\rightarrow 0
    \end{equation*}
    We thus have that
    \begin{align*}
        \limsup_n \frac{\log \norm{T(\gamma|n)}}{\log W(\gamma|n)}&\leq  \limsup_k  \frac{\sum_{i=1}^{k+1} (A_i \log \norm{T(\phi_i)}+B_i\log \norm{T(\psi_i)})}{\sum_{i=1}^{k} (A_i\log W(\phi_i)+B_i\log W(\psi_i))}\\
                                                                                              &=\limsup_k  \frac{\sum_{i=1}^{k+1} (A_i u_i \log W(\phi_i)+B_i v_i\log W(\psi_i))}{\sum_{i=1}^{k} (A_i\log W(\phi_i)+B_i\log W(\psi_i))} \\
                                                                                              &=\limsup_k  \frac{\sum_{i=1}^{k+1} (iA_i u_i +i B_i v_i)}{\sum_{i=1}^{k} (iA_i+i B_i)}\\
                                                                                              &=\limsup_k  \frac{\sum_{i=1}^{k+1} i^2(s u_i + (1-s)v_i)}{\sum_{i=1}^{k} i^2}.
    \end{align*}

    Fix $\epsilon >0$.
    Since $\lim_{i\to\infty}(su_i+(1-s)v_i)=\alpha$, for large enough $N$, $su_i +(1-s)v_i \leq \alpha +\epsilon$ for all $i \geq N$.
    Thus
    \begin{equation*}
        \limsup_n \frac{\log \norm{T(\gamma|n)}}{\log W(\gamma|n)} \leq \limsup_k  \frac{\sum_{i=N}^{k+1} i^2 (\alpha + \epsilon)}{\sum_{i=1}^{k} i^2} \leq \alpha + \epsilon.
    \end{equation*}

    Similar reasoning shows that 
    \begin{equation*}
        \liminf_n \frac{\log \norm{T(\gamma|n)}}{\log W(\gamma|n)} \geq \alpha -\epsilon.
    \end{equation*}
    As $\epsilon>0$ was arbitrary, it follows that $\lambda (\gamma) =\alpha$, as claimed.
\end{proof}
The following result now follows directly from \cref{t:symb-lset}.
\begin{corollary}\label{c:ly-set}
    Let $\mathcal{G}$ be an irreducible matrix product system.
    Then the set of attainable Lyapunov exponents is the compact interval $[\alpha_{\min},\alpha_{\max}]$.
\end{corollary}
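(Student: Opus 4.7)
The plan is to observe that the corollary is essentially a packaging of the two halves of \cref{t:symb-lset}, together with the fact established in \cref{l:a-lim} that $\alpha_{\min}$ and $\alpha_{\max}$ are real numbers.

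First I would address the ``containment'' direction. The Lyapunov exponent of a path $\gamma$ is defined only when the lower and upper Lyapunov exponents coincide, so any attainable Lyapunov exponent $\lambda(\gamma)$ equals both $\underline{\lambda}(\gamma)$ and $\overline{\lambda}(\gamma)$. Part (i) of \cref{t:symb-lset} then places $\lambda(\gamma)$ in $[\alpha_{\min},\alpha_{\max}]$. Hence the set of attainable Lyapunov exponents is contained in this interval, which is compact since $\alpha_{\min},\alpha_{\max}\in\mathbb{R}$.

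For the reverse inclusion, I would fix any $\alpha\in[\alpha_{\min},\alpha_{\max}]$ and apply part (ii) of \cref{t:symb-lset}. The hypothesis of (ii) requires a finite path $\xi\in\Sigma^*$ with $T(\xi)$ non-zero; any single edge $e\in E(G)$ with $\norm{T(e)}>0$ (which exists since $G$ is strongly connected and the matrix product system is irreducible, so in particular transition matrices are not identically zero) works. The conclusion of (ii) supplies an infinite path $\gamma\in\Sigma$ whose Lyapunov exponent exists and equals $\alpha$, so $\alpha$ is attainable.

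Combining the two inclusions yields that the set of attainable Lyapunov exponents equals $[\alpha_{\min},\alpha_{\max}]$. There is no substantive obstacle here; the real work was done in \cref{t:symb-lset} and \cref{l:a-lim}, and the corollary is a short deduction that I would write in a few sentences.
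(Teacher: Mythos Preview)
Your proposal is correct and mirrors the paper's own treatment: the paper simply states that the corollary follows directly from \cref{t:symb-lset}, and your two-inclusion argument is exactly the intended unpacking of that remark. The only added detail you supply beyond the paper is the observation that a suitable $\xi$ with $T(\xi)\neq 0$ exists, which is immediate from irreducibility as you note.
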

\subsection{Density of periodic paths}\label{ss:d-per}
An interesting class of paths are the so-called \defn{periodic paths}, which are the paths in $\Sigma$ of the form
\begin{equation*}
    \gamma=(\theta,\theta,\ldots)
\end{equation*}
where $\theta$ is a cycle.
We denote them by $\mathcal{P}$.
We refer to $\theta$ as a \defn{period} of the path.

The Lyapunov exponent of a periodic path always exists and has a simple formula.
\begin{proposition}\label{p:per-ldim}
    Let $\gamma$ be a periodic path with period $\theta$.
    Then the Lyapunov exponent of $\gamma$ exists and is given by
    \begin{equation*}
        \lambda(\gamma)=\frac{\log \spr T(\theta)}{\log W(\theta)}.
    \end{equation*}
\end{proposition}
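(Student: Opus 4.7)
The plan is to reduce the computation to Gelfand's formula applied to the single matrix $T(\theta)$. Write $k$ for the length of $\theta$ and, given $n \in \N$, decompose $n = mk + r$ with $0 \leq r < k$ and factor $\theta = \alpha \beta$ with $|\alpha| = r$. Then directly from the periodicity of $\gamma$,
\begin{equation*}
    T(\gamma|n) = T(\theta)^m T(\alpha), \qquad W(\gamma|n) = W(\theta)^m W(\alpha).
\end{equation*}
The weight identity already gives $\log W(\gamma|n) = m \log W(\theta) + O(1)$, where the implicit constant is uniform in $n$ because there are only $k$ possible values of $r$.

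For the matrix norms, the factorization above immediately yields the upper bound $\norm{T(\gamma|n)} \leq \norm{T(\theta)^m} \cdot \norm{T(\alpha)}$. For the lower bound, I would use the dual identity
\begin{equation*}
    T(\theta)^{m+1} = T(\gamma|n)\,T(\beta),
\end{equation*}
which gives $\norm{T(\gamma|n)} \geq \norm{T(\theta)^{m+1}} / \norm{T(\beta)}$ provided $T(\beta) \neq 0$. Assuming $T(\theta) \neq 0$, the relation $T(\theta) = T(\alpha) T(\beta)$ forces $T(\beta) \neq 0$ (and $T(\alpha) \neq 0$) for every $0 \leq r < k$, so both $\norm{T(\alpha)}$ and $\norm{T(\beta)}$ stay bounded above and away from zero by constants depending only on $\theta$. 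Together these estimates show $\log \norm{T(\gamma|n)} = \log \norm{T(\theta)^m} + O(1)$, again uniformly in $n$.

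To conclude, I would invoke Gelfand's formula: $\lim_{m\to\infty} m^{-1} \log \norm{T(\theta)^m} = \log \spr T(\theta)$. Dividing the two asymptotic expressions above, the $O(1)$ additive perturbations are dominated by the linear-in-$m$ main terms, and since $\log W(\theta) < 0$ the ratio converges unambiguously to $\log \spr T(\theta) / \log W(\theta)$, proving both that the limit exists and that it has the claimed value. The mild obstacle is simply checking that the subpaths $\alpha$ and $\beta$ have nonzero transition matrices so that the lower bound on $\norm{T(\gamma|n)}$ is valid; the degenerate cases $T(\theta) = 0$ or $\spr T(\theta) = 0$ can be addressed separately by noting that both sides of the claimed formula reduce to $+\infty$ under the standard convention $\log 0 = -\infty$.
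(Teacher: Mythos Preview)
Your proof is correct and follows essentially the same approach as the paper: both arguments sandwich $\norm{T(\gamma|n)}$ between constant multiples of $\norm{T(\theta)^m}$ and $\norm{T(\theta)^{m+1}}$ via submultiplicativity (your $\alpha,\beta$ are the paper's $\theta_1\cdots\theta_j$ and $\theta_{j+1}\cdots\theta_k$), and then finish with Gelfand's formula. The only difference is that you explicitly dispose of the degenerate cases $T(\theta)=0$ and $\spr T(\theta)=0$, which the paper leaves implicit.
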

\begin{proof}
    Assume that $\theta = (\theta_1,...,\theta_k)$.
    For any positive integer $k$ and $j=1,\ldots,k$, 
    \begin{equation*}
        \norm{T(\theta^n\theta_1\cdots \theta_j)} \leq \norm{T(\theta^n)}\norm{T(\theta_1\cdots \theta_j)}
    \end{equation*}
    and
    \begin{equation*}
        \norm{T(\theta^{n+1})} \leq \norm{T(\theta^n\theta_1\cdots \theta_j)}\norm{T(\theta_{j+1}\cdots \theta_k)}.
    \end{equation*}
    Consequently, there is some $A,B>0$, depending only on $\theta$, such that
    \begin{equation*}
        A\norm{T(\theta^{n+1})} \leq\norm{T(\theta^n\theta_1\cdots \theta_j)}\leq B\norm{T(\theta^n)}.
    \end{equation*}
    The result follows directly from the fact that 
    \begin{equation*}
        \lim_{n\to\infty}\frac{\log\norm{T(\theta^n)}}{n}=\log \spr(T(\theta)).
    \end{equation*}
\end{proof}

\begin{proposition}\label{l:per-dens}
    The set $\{\lambda(\gamma):\gamma\in\mathcal{P}\}$ is dense in $[\alpha_{\min},\alpha_{\max}]$.
\end{proposition}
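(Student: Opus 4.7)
The plan is to approximate a general Lyapunov exponent $\alpha \in [\alpha_{\min}, \alpha_{\max}]$ by the exponents of periodic paths obtained by truncating an infinite path with Lyapunov exponent exactly $\alpha$ and then closing it up into a cycle.

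First, I would invoke \cref{t:symb-lset}(ii) with any fixed $\xi \in \Sigma^*$ such that $T(\xi) \neq 0$ (which exists because the irreducibility hypothesis forces some edge to have nonzero transition matrix). This supplies an infinite path $\gamma = (e_i)_{i=1}^\infty$ with $\lambda(\gamma) = \alpha$. Set $\eta_n = \gamma | n$; by the existence of the Lyapunov exponent, $\log \norm{T(\eta_n)} / \log W(\eta_n) \to \alpha$ as $n \to \infty$.

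Next, for each $n$ apply \cref{l:irredp} to obtain a path $\phi_n \in \mathcal{H}$ such that $\tau_n := \eta_n \phi_n$ is a cycle and
\begin{equation*}
    A \norm{T(\eta_n)} \leq \spr T(\tau_n) \leq B \norm{T(\eta_n)}.
\end{equation*}
Let $\gamma^{(n)} = (\tau_n, \tau_n, \ldots) \in \mathcal{P}$. By \cref{p:per-ldim},
\begin{equation*}
    \lambda(\gamma^{(n)}) = \frac{\log \spr T(\tau_n)}{\log W(\tau_n)} = \frac{\log \norm{T(\eta_n)} + O(1)}{\log W(\eta_n) + \log W(\phi_n)},
\end{equation*}
where the $O(1)$ error comes from the two-sided bound in \cref{l:irredp}. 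Since $\mathcal{H}$ is a finite family of paths, $\log W(\phi_n)$ is bounded, so the denominator equals $\log W(\eta_n) + O(1)$, and $|\log W(\eta_n)| \geq n |\log W_{\max}| \to \infty$.

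Finally, I would conclude by a direct ratio estimate: dividing the numerator and denominator by $\log W(\eta_n)$ and letting $n \to \infty$, the bounded additive errors wash out, giving $\lambda(\gamma^{(n)}) \to \alpha$. (In the edge case $\alpha = 0$, the same computation works since the numerator is either bounded or diverges at a slower rate than the denominator, either way yielding limit $0$.) Hence every $\alpha \in [\alpha_{\min}, \alpha_{\max}]$ lies in the closure of $\{\lambda(\gamma) : \gamma \in \mathcal{P}\}$, which combined with the inclusion from \cref{t:symb-lset}(i) gives density. There is no serious obstacle here beyond verifying that the additive $O(1)$ errors are genuinely negligible against a denominator tending to $-\infty$; the only care needed is that $\log \norm{T(\eta_n)}$ behaves compatibly, which is automatic from the hypothesis $\lambda(\gamma) = \alpha$.
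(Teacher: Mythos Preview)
Your proposal is correct and follows essentially the same route as the paper: both truncate an infinite path $\gamma$ with $\lambda(\gamma)=\alpha$, close each prefix into a cycle via \cref{l:irredp}, and use \cref{p:per-ldim} together with the boundedness of the connecting paths in $\mathcal{H}$ to conclude that the resulting periodic exponents converge to $\alpha$. The only cosmetic difference is that the paper phrases the reduction as ``approximate any existing Lyapunov exponent by periodic ones'' rather than invoking the full strength of \cref{t:symb-lset}(ii), but the core argument is identical.
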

\begin{proof}
    It suffices to show that if $\gamma\in\Sigma$ is an arbitrary path such that $\lambda(\gamma)$ exists, there exists a sequence of periodic paths $\{y_n\}_{n=1}^\infty$ such that $\lim_{n\to\infty}\lambda(y_n)=\lambda(\gamma)$.

    By \cref{l:irredp}, there are constants $A,B>0$ such that for any $k\in\N$, there is a path $\eta_k\in\mathcal{H}$ such that $(\gamma|k)\eta_k\eqqcolon\theta_k$ is a cycle and
    \begin{equation*}
        A\norm{T(\gamma|k)}\leq\spr T(\theta_k)\leq B\norm{T(\gamma|k)}.
    \end{equation*}
    Let $\overline{\theta}_k=(\theta_k,\theta_k,\ldots)$.
    This is a periodic path with period $\theta_k$, so that
    \begin{equation*}
        \lambda(\overline{\theta_k})=\frac{\log\spr(T(\theta_k))}{\log W(\theta_k)}
    \end{equation*}
    by \cref{p:per-ldim}.
    Also, $W(\gamma|k) \asymp W(\theta_k)$.
    Hence
    \begin{align*}
        \limsup_{k\to\infty}\lambda(\overline{\theta_k}) &\leq \limsup_{k\to\infty}\frac{\log A\norm{T(\gamma|k)}}{\log W(\gamma|k)} = \lambda(\gamma)
    \end{align*}
    and the lower bound follows identically.
    Thus $\lim_{k\to\infty}\lambda(\overline{\theta_k})=\lambda(\gamma)$ and we have density, as claimed.
\end{proof}

\section{Iterated function systems and their matrix product systems}\label{s:ifs-mps}
We now turn to studying iterated function systems of similarities.
In this section, we will describe how the dynamics of associated self-similar sets and measures can be encoded with a matrix product system.

\subsection{The transition graph and the finite neighbour condition}\label{ss:tg-fnc}
We begin with notation and terminology.
By an iterated function system (IFS), $\{S_{i}\}_{i=1}^{m},$ we mean a finite set of similarities 
\begin{equation}  \label{e:ifs}
    S_{i}(x)=r_{i}x+d_{i}\colon\mathbb{R}\rightarrow \mathbb{R}\text{ for each } i=1,...,m
\end{equation}
with $0<\left\vert r_{i}\right\vert <1$ and $m\geq 1$.
We say that the IFS is \defn{equicontractive} if $r_1=\cdots=r_m>0$.

Each IFS generates a unique non-empty, compact set $K$ satisfying 
\begin{equation*}
    K=\bigcup_{i=1}^{m}S_{i}(K), 
\end{equation*}
known as the associated \defn{self-similar set}.
We will assume $K$ is not a singleton.
By translating the $d_{i}$ as necessary, without loss of generality we may assume that the convex hull of $K$ is $[0,1]$.

Given probabilities $\mathbf{p}=(p_{i})_{i=1}^{m}$ where $p_{i}>0$ and $\sum_{i=1}^{m}p_{i}=1$, there exists a unique Borel probability measure $\mus$ satisfying 
\begin{equation}\label{e:minv}
    \mus(E)=\sum_{i=1}^mp_{i}\mus(S_{i}^{-1}(E))
\end{equation}
for any Borel set $E\subseteq K$.
This non-atomic measure $\mus$ is known as an associated self-similar measure and has as its support the self-similar set $K$.

Given $\sigma =(\sigma _{1},\ldots ,\sigma _{j})\in \{1,...,m\}^{j}$, we
denote 
\begin{equation*}
    \sigma ^{-}=(\sigma _{1},\ldots ,\sigma _{j-1})\text{, }S_{\sigma}=S_{\sigma _{1}}\circ \cdots \circ S_{\sigma _{j}}\text{ and }r_{\sigma}=r_{\sigma _{1}}\cdots r_{\sigma _{j}}.
\end{equation*}
For $t>0,$ put 
\begin{equation*}
    \Lambda _{t}=\{\sigma :|r_{\sigma }|<t\leq |r_{\sigma ^{-}}|\}.
\end{equation*}
The elements of $\Lambda _{t}$ are called the \defn{words of generation $t$}.
We remark that in the literature it is more common to see this defined by the rule $|r_{\sigma }|\leq t<|r_{\sigma ^{-}}|$, but this essentially equivalent choice is more convenient for our purposes.

The notions of net intervals and neighbour sets were first introduced in \cite{fen2003} and extended in \cite{hhr2021,ruttoappear}.
We summarize the key ideas here.

Let $h_{1},\ldots ,h_{s(t)}$ be the collection of distinct elements of the set $\{S_{\sigma }(0),S_{\sigma }(1):\sigma \in \Lambda _{t}\}$ listed in strictly ascending order and set 
\begin{equation*}
    \mathcal{F}_{t}=\{[h_{j},h_{j+1}]:1\leq j<s(t)\text{ and }(h_{j},h_{j+1})\cap K\neq \emptyset \}.
\end{equation*}
The elements of $\mathcal{F}_{t}$ are called the \defn{net intervals of generation} $t$.
Note that $[0,1]$ is the (unique) net interval of any generation $t>1$ and denote by 
\begin{equation*}
    \mathcal{F}=\bigcup_{t>0}\mathcal{F}_{t} 
\end{equation*}
the set of all net intervals.

Given a net interval $\Delta$, we denote by $T_{\Delta}$ the unique similarity $T_{\Delta}(x)=rx+a$ with $r>0$ such that 
\begin{equation*}
    T_{\Delta }([0,1])=\Delta.
\end{equation*}
Of course, here $r=\diam(\Delta)$ and $a$ is the left endpoint of $\Delta$.

\begin{definition}\label{d:nb}
    We will say that a similarity $f(x)=Rx+a$ is a \defn{neighbour} of $\Delta \in \mathcal{F}_{t}$ if there exists some $\sigma \in \Lambda_{t}$ such that $S_{\sigma }(K)\cap\Delta^\circ\neq\emptyset $ and $f=T_{\Delta }^{-1}\circ S_{\sigma }$.
    In this case, we also say that $S_{\sigma }$ \defn{generates} the neighbour $f$.

    The \defn{neighbour set} of $\Delta $ is the maximal set 
    \begin{equation*}
        \vs_t(\Delta )=\{f_{1},\ldots ,f_{k}\}
    \end{equation*}
    where each $f_{i}=T_{\Delta }^{-1}\circ S_{\sigma_{i}}$ is a distinct neighbour of $\Delta$.
    We denote by
    \begin{equation*}
        \rmax(\Delta)\coloneqq\max\{|R|:x\mapsto Rx+a\in\vs_t(\Delta)\}
    \end{equation*}
    the maximum contraction factor of any neighbour of $\Delta$.
\end{definition}

When the generation is implicit, we will often write $\mathcal{V}(\Delta )$.
Since $K=\bigcup_{\sigma \in \Lambda_{t}}S_{\sigma }(K)$, every net interval $\Delta$ has a non-empty neighbour set.
\begin{remark}
    As explained in \cite[Remark~2.2]{ruttoappear}, for an equicontractive IFS $\{\lambda x+d_i\}_{i\in\mathcal{I}}$ with $0<\lambda<1$, our notion of neighbour set is closely related to Feng's neighbour and characteristic vector construction \cite{fen2003}.
    Instead of normalizing by some global factor of the form $\lambda^{n}$, we normalize locally with respect to $\diam(\Delta)$.
    In this case, the words of generation $\lambda^{n-1}$ are the words of length $n$ and the net intervals of generation $n$ (in Feng's notation) have diameter comparable to $\lambda^n$. 

    This is important since, outside the equicontractive case, there is no uniform notion of an integer-valued generation.
\end{remark}

We now discuss some illustrative examples of this construction.

Assume $\Delta \in \mathcal{F}_{t}$ has neighbour set $\{f_{1},\ldots,f_{k}\}$ and for each $i$, let $S_{\sigma _{i}}$ generate the neighbour $f_{i}$.
The \defn{transition generation} of $\Delta $, denoted $\tg(\Delta )$, is given by 
\begin{equation*}
    \tg(\Delta )=\max \{|r_{\sigma _{i}}|:1\leq i\leq k\}.
\end{equation*}
It is straightforward to verify that $\tg(\Delta )=\rmax(\Delta )\diam(\Delta )$.
The \defn{children} of (\defn{parent}) $\Delta$ are the net intervals of generation $\tg(\Delta )$ contained in $\Delta$.
We remark that if there is only one child, $\Delta_{1}$, then $\vs(\Delta )\neq \vs(\Delta_{1})$.
Given $\Delta =[a,b],$ with child $\Delta _{1}=[a_{1},b_{1}]$, we define the \textit{position index} $q(\Delta ,\Delta_{1})=(a_{1}-a)/\diam\Delta$.
The position index will enable us to distinguish children with the same neighbour set.

The children of a net interval are locally determined by the neighbour set of the net interval in the following sense.
\begin{theorem}[\cite{ruttoappear}, Theorem~2.8]\label{t:ttype}
    Let $\{S_i\}_{i=1}^m$ be an arbitrary IFS.
    Then for any $\Delta\in\mathcal{F}_t$ with children $(\Delta_1,\ldots,\Delta_n)$ in $\mathcal{F}_{\tg(\Delta)}$, the index $n$, neighbour sets $\vs(\Delta_i)$, position indices $q(\Delta,\Delta_i)$, and ratios $\tg(\Delta_i)/\tg(\Delta)$ depend only on $\vs(\Delta)$.
\end{theorem}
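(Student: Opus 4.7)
The plan is to transfer everything through the similarity $T_\Delta^{-1}$, working in normalized coordinates where $\Delta$ becomes $[0,1]$, and to exhibit all four pieces of data (number $n$ of children, neighbour sets, position indices, and generation ratios) as explicit functions of $\vs(\Delta)$ and the ambient IFS $\{S_i\}$. First, note that $\rmax(\Delta) = \max\{|R| : (x\mapsto Rx+a) \in \vs(\Delta)\}$ is manifestly a function of $\vs(\Delta)$, hence so is the ``effective scale'' $\tg(\Delta)/\diam(\Delta)$. The single most useful observation is a factorization lemma: every $\sigma' \in \Lambda_{\tg(\Delta)}$ with $S_{\sigma'}(K) \cap \Delta^\circ \neq \emptyset$ decomposes uniquely as $\sigma' = \sigma \tau$, where $\sigma \in \Lambda_t$ is a prefix generating some $f \in \vs(\Delta)$ and $\tau$ is a (possibly empty) tail word satisfying $|R_f||r_\tau| < \rmax(\Delta) \leq |R_f||r_{\tau^-}|$. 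This follows because $\tg(\Delta) < t$, so each such $\sigma'$ must extend a word of generation $t$; and the extending prefix must itself generate a neighbour of $\Delta$ since $S_{\sigma'}(K) \subseteq S_\sigma(K)$ intersects $\Delta^\circ$.

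Second, I would compute the normalized child-endpoints. Because $T_\Delta^{-1} \circ S_{\sigma'} = f \circ S_\tau$, every endpoint of a child of $\Delta$ in $\Delta^\circ$ lifts under $T_\Delta^{-1}$ to a point of the form $f(S_\tau(0))$ or $f(S_\tau(1))$ lying in $(0,1)$, where $f$ and $\tau$ range as above. The set of such tail words $\tau$ depends only on $|R_f|$, $\rmax(\Delta)$, and the contraction ratios $r_1,\ldots,r_m$, hence only on $\vs(\Delta)$. Including the two boundary endpoints $\{0,1\}$, I obtain an ordered list $0 = h'_1 < h'_2 < \cdots < h'_s = 1$ that is a pure function of $\vs(\Delta)$. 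To decide which consecutive pair $[h'_j, h'_{j+1}]$ is an actual (normalized) child, I use that $(h'_j, h'_{j+1}) \cap T_\Delta^{-1}(K) \neq \emptyset$ iff the interval meets $\bigcup_{f \in \vs(\Delta)} f(K)$: the inclusion $\supseteq$ is immediate, and $\subseteq$ follows since any $\sigma \in \Lambda_t$ with $S_\sigma(K)$ hitting $\Delta^\circ$ must generate a neighbour, while those touching $\Delta$ only at its endpoints contribute only to $\partial \Delta$ and not to $(h'_j, h'_{j+1})$. This test also depends only on $\vs(\Delta)$, so both the number $n$ of children and their normalized positions (hence the position indices $q(\Delta,\Delta_i)$ and the ratios $\diam(\Delta_i)/\diam(\Delta)$) are determined.

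Finally, for each child $\Delta_i$ I recover its neighbour set by iterating the same normalization: a neighbour of $\Delta_i$ has the form $T_{\Delta_i}^{-1} \circ S_{\sigma'}$ where $\sigma'$ generates it, and this equals $(T_\Delta^{-1} \circ T_{\Delta_i})^{-1} \circ f \circ S_\tau$. The outer factor $(T_\Delta^{-1}\circ T_{\Delta_i})^{-1}$ is determined by the normalized position and diameter of $\Delta_i$ in $[0,1]$, which we have already shown are functions of $\vs(\Delta)$; the inner factor $f \circ S_\tau$ ranges over the set described above. The intersection condition $S_{\sigma'}(K) \cap \Delta_i^\circ \neq \emptyset$ translates to $f \circ S_\tau(K)$ meeting the interior of the normalized $\Delta_i$, again purely internal data. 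Thus $\vs(\Delta_i)$ is determined, and together with $\rmax(\Delta_i)$ it fixes $\tg(\Delta_i)/\tg(\Delta) = \rmax(\Delta_i)\cdot(\diam(\Delta_i)/\diam(\Delta))/\rmax(\Delta)$. The only delicate step is the factorization lemma and the careful verification that the set of admissible tails $\tau$ is indeed a function of $\vs(\Delta)$ alone; the rest is routine bookkeeping in the normalized chart.
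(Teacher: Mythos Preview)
The paper does not prove this theorem; it is quoted from \cite{rut2020} and used as a black box, so there is no in-paper proof to compare against. Your proposal supplies the natural argument, and it is essentially correct: the key observations---that $\rmax(\Delta)$ (hence $\tg(\Delta)/\diam(\Delta)$) is read off from $\vs(\Delta)$, that every $\sigma'\in\Lambda_{\tg(\Delta)}$ contributing a point in $\Delta^\circ$ factors as $\sigma\tau$ with $\sigma\in\Lambda_t$ generating a neighbour $f$, and that the admissible tails $\tau$ are determined by the normalized inequality $|R_f|\,|r_\tau|<\rmax(\Delta)\leq|R_f|\,|r_{\tau^-}|$---are exactly what the original proof in \cite{rut2020} uses.

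One small point worth tightening: the empty-tail case $\tau=\emptyset$ should be treated separately, since $r_{\tau^-}$ is undefined. There $\sigma'=\sigma\in\Lambda_t\cap\Lambda_{\tg(\Delta)}$, and because $\sigma\in\Lambda_t$ already forces $|r_{\sigma^-}|\geq t>\tg(\Delta)$, the only remaining constraint is $|r_\sigma|<\tg(\Delta)$, i.e.\ $|R_f|<\rmax(\Delta)$, which is again a function of $\vs(\Delta)$ alone. With that caveat the argument is complete.
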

Thus much of the important information about the IFS is captured in the behaviour of the neighbour sets.
This motivates the construction of the directed \textit{transition graph}, $\mathcal{G}(\{S_{i}\}_{i=1}^{m})$, defined as follows.
The vertex set of $\mathcal{G}$, denoted $V(\mathcal{G})$, is the set of distinct neighbour sets, $V(\mathcal{G})=\{\vs(\Delta ):\Delta \in \mathcal{F}\}$.
For each parent/child pair of net intervals, $\Delta \in \mathcal{F}_{t}$ and $\Delta_{i}\in \mathcal{F}_{\tg(\Delta )}$, we introduce an edge $e=(\vs_{t}(\Delta),\vs_{\tg(\Delta)}(\Delta_{i}),q(\Delta ,\Delta _{i}))$.
Here $\vs_{t}(\Delta )$ is the source vertex and $\vs_{\tg(\Delta )}(\Delta _{i})$ is the target vertex.
We write $E(\mathcal{G})$ for the set of all edges.
By \cref{t:ttype}, this construction is well-defined since it depends only on the neighbour set of $\Delta $.

An \defn{(admissible) path} $\eta $ in $\mathcal{G}$ is a sequence of edges $\eta =(e_{1},\ldots ,e_{n})$ in $\mathcal{G}$ where the target of $e_{i}$ is the source of $e_{i+1}$.
A path in $\mathcal{G}$ is a \defn{cycle} if it begins and ends at the same vertex.
We denote by $\Sigma_0$ the set of infinite paths beginning at the root vertex $\vs([0,1])$, and $\Sigma_0^*$ the set of finite paths beginning at $\vs([0,1])$.

Nested sequences of net intervals are in correspondence with finite paths in $\Sigma_0^*$.
Given $\Delta \in \mathcal{F}_{t}$, consider the sequence $(\Delta _{0},\ldots ,\Delta _{n})$ where $\Delta _{0}=[0,1]$, $\Delta_{n}=\Delta $, and each $\Delta _{i}$ is a child of $\Delta _{i-1}$.
By the \defn{symbolic representation} of $\Delta$, we mean the finite path $\eta =(e_{1},\ldots,e_{n})$ in $\mathcal{G}$ where 
\begin{equation*}
    e_{i}=\bigl(\vs(\Delta _{i-1}),\vs(\Delta _{i}),q(\Delta _{i-1},\Delta _{i})\bigr)\text{ for each }i=1,...,n.
\end{equation*}
Conversely, if $\eta =(e_{1},\ldots ,e_{n})$ is any finite path, we say that $\eta$ is \defn{realized} by $(\Delta _{i})_{i=0}^{n}$ if each $\Delta_{i}$ is a child of $\Delta _{i-1}$ and each $e_{i}=(\vs(\Delta _{i-1}),\vs(\Delta _{i}),q(\Delta _{i-1},\Delta _{i}))$.
We denote the symbolic representation of $\Delta$ by $[\Delta]$.

\begin{definition}\label{d:syr}
    Given some $x\in K$, we say that an infinite path $\gamma\in\Sigma_0$ is a \defn{symbolic representation} of $x$ if
    \begin{equation*}
        \{x\}=\bigcap_{i=1}^\infty\Delta_i
    \end{equation*}
    where for each $n$, $[\Delta_n]$ is the symbolic representation of the length $n$ prefix of $\gamma$, denoted by $\gamma|n$.
    We say that $x$ is an \defn{interior point} of $K$ if $x$ has a unique symbolic representation.
\end{definition}
If $x$ is not an interior point, then $x$ must be an endpoint of two distinct net intervals at any sufficiently small scale.

\begin{definition}\label{d:e-len}
    Let $\mathcal{G}$ be the transition graph of an IFS.
    We define the \defn{edge weight}, $W\colon E(\mathcal{G})\to(0,1)$ by the rule that if edge $e$ has source $\vs(\Delta_1)$ and target $\vs(\Delta_2)$, then $W(e)=\tg(\Delta_2)/\tg(\Delta_1)$.
\end{definition}
This function is well-defined by \cref{t:ttype}.
We extend $W$ to finite paths by putting $W(\eta )=W(e_{1})\cdots W(e_{n})$ when $\eta =(e_{1},\ldots ,e_{n})$ .

An important observation is that if $\Delta \in \mathcal{F}_{t}$ is any net interval with symbolic representation $\eta $, then $W(\eta )\asymp t$, with constants of comparability not depending on $\Delta$.
While the above choice of the weight for an edge is not unique with this property, a straightforward argument shows that any such function must agree with $W$ on any cycle.

\begin{definition}\label{d:fnc}
    We say that the IFS $\{S_i\}_{i=1}^m$ satisfies the \defn{finite neighbour condition} if its transition graph is a finite graph.
\end{definition}
Equivalently, there are only finitely many neighbours.
We also say that the associated self-similar measure satisfies the finite neighbour condition, even though this condition does not depend on the choice of probabilities.

The finite neighbour condition was introduced in \cite{hhr2021} and explored in more detail in \cite[Section~5]{ruttoappear}.
In \cite{hhr2021} it was shown that the finite neighbour condition is equivalent to the generalized finite type condition holding with respect to the invariant open set $(0,1)$ (see \cite{ln2007} for the original definition of GFT) and hence satisfies the weak separation condition \cite{ln2007}.
In particular, all IFS that satisfy the open set condition or the finite type condition with respect to $(0,1)$ (see \cite{fen2003} for the definition of finite type) satisfy the finite neighbour condition.
For simplicity, throughout the remainder of this document, whenever we say that an IFS satisfies the finite type condition, we always mean with respect to $(0,1)$.

This includes examples such as the iterated function systems
\begin{equation*}
    \{\rho x,\rho x+(1-\rho )\}
\end{equation*}
where $\rho$ is the reciprocal of a Pisot number.
Here, the associated self-similar measures are the much studied Bernoulli convolutions (c.f., \cite{fen2005}, \cite{var2018} and the many references cited therein), or the overlapping Cantor-like IFS
\begin{equation*}
    \{x/d+i(d-1)/md:i=0,1,...,m\}
\end{equation*}
where $d\geq 3$ is a natural number (see \cite{hhn2018,shm2005}).
For example, in the case of the Bernoulli convolution with $\rho$ the reciprocal of the Golden mean, there are six neighbour sets.
These are listed in \cref{ex:bconv} and the transition graph is given in \cref{f:gm-graph}.

A non-equicontractive example is given by the IFS $\{\rho x,rx+\rho (1-r),rx+1-r\}$ where $\rho ,r>0$ satisfy $\rho +2r-\rho r\leq 1$.
This was introduced in \cite{lw2004} where it was shown to satisfy the WSC.
In fact, this IFS satisfies the finite neighbour condition (see \cite{ln2007} or \cite[Section~5.3]{ruttoappear}).
Note that it does not satisfy the open set condition (due to the existence of exact overlaps) and does not necessarily have commensurable contraction factors, so it cannot be of finite type.
See \cref{f:tgraph} for its transition graph and \cref{ex:lw} for more details about its structure.
Other examples of IFS satisfying the finite neighbour condition can also be found in \cref{s:exifs}.

In \cite[Theorem~4.4]{hhr2021} it was proven, under the assumption that the self-similar set is an interval, that the finite neighbour condition is equivalent to the weak separation condition.
It is unknown if the two properties coincide for IFS in $\mathbb{R}$.
Further details on these various separation conditions for IFS can be found in \cite{hhr2021}.

\subsection{Transition matrices}\label{ss:tg-im}
We now show how one can encode the measure of net intervals through the so-called transition matrices.

For the remainder of the paper, we fix a total order on the set of all neighbours $\{f:f\in \vs(\Delta ),\Delta \in \mathcal{F}\}$.
Let $e\in E(\mathcal{G})$ be an edge, say $e=(\vs(\Delta_{1}),\vs(\Delta _{2}),q(\Delta _{1},\Delta _{2}))$.
Assume the neighbour sets are given by $\vs(\Delta _{1})=\{f_{1},\ldots ,f_{k}\}$ and $\vs(\Delta_{2})=\{g_{1},\ldots ,g_{n}\}$ where $f_{1}<\cdots <f_{k}$ and $g_{1}<\cdots <g_{n}$.
We define the \defn{transition matrix} $T(e)$ as the non-negative $k\times n$ matrix given by 
\begin{equation}\label{e:trmat}
    T(e)_{i,j}=p_{\ell }
\end{equation}
if there exists an index $\ell \in \{1,...,m\}$ such that $f_{i}$ is generated by $\sigma $ and $g_{j}$ is generated by $\sigma \ell $; otherwise, set $T(e)_{i,j}=0$.
Note that this definition is slightly different than the original definition; see \cite[Section~5.2]{ruttoappear} for more detail concerning this.

It is clear from \cref{t:ttype} that this definition depends only on the edge $e$.
If $\eta =(e_{1},\ldots ,e_{n})$ is a path, we define 
\begin{equation*}
    T(\eta )=T(e_{1})\cdots T(e_{n}).
\end{equation*}
We refer to these matrices as transition matrices, as well.

Recall that if $\sigma ^{\prime}$ generates any neighbour of $\Delta_{2}$, then necessarily $\sigma ^{\prime }=\sigma \ell$ for some $\sigma $ which generates a neighbour of $\Delta _{1}$; thus, every column of $T(e)$ has a positive entry.
More generally, if $\eta$ is a path, then $T(\eta)$ has a positive entry in every column.
However, it may not hold that each row of $T(\eta)$ has a positive entry.

We continue to use the notation $\norm{T}=\sum_{i,j}T_{ij}$ for the matrix $1$-norm of a non-negative matrix $T$.

The following relationship between the measure of net intervals and transition matrices is known.
\begin{proposition}[\cite{ruttoappear}, Corollary~5.5]\label{c:dt-form}
    Suppose $\Delta$ is a net interval with symbolic representation $\eta$.
    Then
    \begin{equation*}
        \mus(\Delta) \asymp \norm{T(\eta)},
    \end{equation*}
    with constants of comparability not depending on the choice of $\Delta$.
\end{proposition}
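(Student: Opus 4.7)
The plan is to iterate the self-similar relation along the symbolic representation and read off a matrix-product structure whose final boundary contributions are uniformly controlled. Given $\Delta \in \mathcal{F}_t$, I would begin by expanding
\begin{equation*}
    \mu(\Delta) = \sum_{\sigma \in \Lambda_t} p_\sigma\, \mu(S_\sigma^{-1}(\Delta)),
\end{equation*}
which follows from iterating \cref{e:minv} and noting that the boundary of $\Delta$ is $\mu$-null (so the choice of whether to use $\Delta^\circ$ or $\Delta$ is immaterial). Only the words with $S_\sigma(K) \cap \Delta^\circ \neq \emptyset$ contribute. For each such $\sigma$, writing $f = T_\Delta^{-1} \circ S_\sigma$ identifies the summand as $p_\sigma\, \mu(f^{-1}([0,1]))$, so after grouping the finitely many words $\sigma$ that generate each neighbour $f \in \vs(\Delta)$ (of which there are boundedly many, due to exact overlaps), one obtains
\begin{equation*}
    \mu(\Delta) \asymp \sum_{f \in \vs(\Delta)} p_{\sigma_f}\, \mu(f^{-1}([0,1])),
\end{equation*}
where $\sigma_f$ is any fixed representative word generating $f$.

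Next, I would match the weights $p_{\sigma_f}$ with entries of $T(\eta)$ by induction along the chain $[0,1] = \Delta_0 \supset \Delta_1 \supset \cdots \supset \Delta_n = \Delta$ realizing $\eta = (e_1,\ldots,e_n)$. The vertex $\vs([0,1])$ consists of the single neighbour $\mathrm{id}$, so the initial row vector is simply $v_0 = (1)$. The inductive claim is that $(p_{\sigma_f})_{f \in \vs(\Delta_i)} = v_0\, T(e_1)\cdots T(e_i)$, which is exactly what the definition in \cref{e:trmat} is designed to encode: if $f \in \vs(\Delta_{i-1})$ is generated by $\sigma$ and $g \in \vs(\Delta_i)$ is generated by $\sigma \ell$, then $T(e_i)_{f,g} = p_\ell$, and $p_{\sigma\ell} = p_\sigma \cdot p_\ell$ is precisely the propagation rule for the row vector. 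The only subtlety is that a neighbour may be generated by several words, but choosing consistent representatives across generations (the point emphasized in the remark about the modified definition) makes this bookkeeping go through up to a bounded factor.

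Combining the two steps yields
\begin{equation*}
    \mu(\Delta) \asymp \sum_{f \in \vs(\Delta)} \bigl(v_0\, T(\eta)\bigr)_f \cdot \mu(f^{-1}([0,1])),
\end{equation*}
and it remains to show the weights $u_f := \mu(f^{-1}([0,1]))$ are bounded above and below by positive constants, uniformly in $f$ and $\Delta$. The upper bound is trivial from $\mu$ being a probability measure. The lower bound is where the only real difficulty lies: it requires that every neighbour carry a uniformly positive mass of $\mu$, and this is precisely where the finite neighbour condition is used — only finitely many similarities $f$ occur across all $\Delta \in \mathcal{F}$, each giving a strictly positive mass $\mu(f^{-1}([0,1]) \cap K)$ since $f$ is a genuine neighbour. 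Taking the minimum over this finite list yields the uniform lower bound, and we conclude $\mu(\Delta) \asymp \sum_f (v_0 T(\eta))_f = \|T(\eta)\|$ since $v_0 T(\eta)$ is a single row and $\|\cdot\|$ is the entrywise $1$-norm. The main obstacle is the overlap-handling in the inductive step, which requires the careful choice of representative words built into the paper's transition matrix convention.
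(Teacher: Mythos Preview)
The paper does not prove this proposition; it is quoted from \cite{rut2020} (Cor.~5.5) without argument, so there is no in-paper proof to compare against. Your outline is the standard route to this result and matches how the cited reference proceeds: expand $\mu(\Delta)$ over $\Lambda_t$ via the invariance relation, group contributions by neighbour, identify the resulting weight vector with $v_0\,T(\eta)$ by induction along the net-interval chain, and finish by bounding the terminal masses $\mu(f^{-1}([0,1]))$ uniformly away from zero and infinity.

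Your identification of the two delicate points is accurate. First, a single neighbour may be generated by several words in $\Lambda_t$, and the transition matrix here records only a representative $p_\ell$; the paper explicitly flags this as a ``slightly different'' convention and defers the bookkeeping to \cite[Sec.~5.2]{rut2020}. One also has to be careful that passing from $\Lambda_{t_{i-1}}$ to $\Lambda_{t_i}$ need not correspond to appending exactly one letter for every generating word, so the equality you write as $(p_{\sigma_f})_f = v_0 T(\eta)$ really holds only up to $\asymp$, and establishing that comparability is where the real work in \cite{rut2020} lies. Second, your use of the finite neighbour condition to get the uniform positive lower bound on $\mu(f^{-1}([0,1]))$ is correct and is the cleanest way to close the argument in the setting of this paper; the more general WSC version in \cite{rut2020} needs a different mechanism, but that is not required here.
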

Thus the transition matrices encode the distribution of $\mus$ on net intervals.

We conclude this subsection by mentioning the following straightforward property of transition matrices.
\begin{lemma}\label{l:left-prod}
    Let $\gamma$ be a finite path.
    Fix $n\in\N$ and let $\gamma'=(\gamma_{n+j})_{j=0}^\infty$.
    We have $\norm{T(\gamma')}\asymp\norm{T(\gamma)}$ with constant of comparability depending only on $n$.
\end{lemma}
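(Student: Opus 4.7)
The plan is to write $T(\gamma)=T(\gamma|n)\cdot T(\gamma')$, where $\gamma|n$ denotes the length-$n$ prefix of $\gamma$, and to bound $\norm{T(\gamma)}$ above and below in terms of $\norm{T(\gamma')}$.

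For the direction $\norm{T(\gamma)}\preccurlyeq\norm{T(\gamma')}$, submultiplicativity of the entrywise $1$-norm on non-negative matrices gives $\norm{T(\gamma)}\leq\norm{T(\gamma|n)}\cdot\norm{T(\gamma')}$. Each single-edge matrix $T(e)$ has all row sums at most $\sum_\ell p_\ell=1$, because the positive entries in a fixed row correspond to distinct choices of $\ell$ extending the word $\sigma$ that generates the neighbour indexing that row. This row-sum bound is preserved under matrix multiplication, so $\norm{T(\gamma|n)}\leq d_{\max}$ is a uniform bound (in fact independent of $n$), which suffices.

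For the reverse direction $\norm{T(\gamma')}\preccurlyeq\norm{T(\gamma)}$, which is the substantive step, the constant is allowed to depend on $n$. The key input, noted in the excerpt just before the lemma, is that every column of $T(\gamma|n)$ contains a positive entry. Moreover, any positive entry of $T(\gamma|n)$ is a sum of products of $n$ of the probabilities $p_\ell$ and is therefore bounded below by $p_{\min}^n$, where $p_{\min}=\min_\ell p_\ell>0$. For each column index $j$, choose a row index $i(j)$ with $T(\gamma|n)_{i(j),j}\geq p_{\min}^n$. The entrywise inequality $T(\gamma)_{i(j),k}\geq T(\gamma|n)_{i(j),j}\cdot T(\gamma')_{j,k}$ then gives, for each $k$,
\begin{equation*}
\sum_i T(\gamma)_{i,k}\;\geq\;\max_j T(\gamma)_{i(j),k}\;\geq\;p_{\min}^n\max_j T(\gamma')_{j,k}\;\geq\;\frac{p_{\min}^n}{d_{\max}}\sum_j T(\gamma')_{j,k}.
\end{equation*}
Summing over $k$ yields $\norm{T(\gamma)}\geq d_{\max}^{-1}p_{\min}^n\norm{T(\gamma')}$, giving the desired comparison with a constant depending only on $n$.

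The only real obstacle is this lower bound: transition matrices need not be left-invertible in any useful sense, so one cannot simply ``cancel'' the prefix $T(\gamma|n)$ from the product. The positive-entry-per-column property sidesteps this by allowing each row of $T(\gamma')$ to be detected, up to a factor of $p_{\min}^n$, inside some row of $T(\gamma)$, which is exactly what is needed to recover $\norm{T(\gamma')}$ from $\norm{T(\gamma)}$.
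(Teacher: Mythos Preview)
Your proof is correct and follows essentially the same approach as the paper: both use submultiplicativity for the bound $\norm{T(\gamma)}\preccurlyeq\norm{T(\gamma')}$ and the positive-entry-in-every-column property of transition matrices for the reverse inequality. The only difference is cosmetic: the paper writes $\gamma=\eta\gamma'$, asserts that the column property gives a constant $a(\eta)$ with $\norm{T(\eta\gamma')}\geq a(\eta)\norm{T(\gamma')}$, and then invokes finiteness of the set of length-$n$ prefixes to obtain a constant depending only on $n$, whereas you carry out the calculation explicitly to produce the constant $d_{\max}^{-1}p_{\min}^{\,n}$.
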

\begin{proof}
    Write $\gamma=\eta\gamma'$ where $\eta$ is a path of length $n$.
    Since every transition matrix has a non-zero entry in each column, a straightforward calculation shows that there exists some constant $a=a(\eta)$ such that $\norm{T(\eta\gamma')}\geq a(\eta)\norm{T(\gamma')}$.
    On the other hand, $\norm{T(\eta\gamma')}\leq\norm{T(\eta)}\norm{T(\gamma')}$.
    But there are only finitely many paths $\eta$ of length $n$, giving the result.
\end{proof}

\subsection{Maximal loop classes and irreducibility}
From this point on we will assume that $\mathcal{G}$ is the matrix product system corresponding to an IFS that satisfies the finite neighbour condition.

Let $\mathcal{L}$ be an induced subgraph of $\mathcal{G}$ (i.e. $\mathcal{L}$ is the graph consisting of the vertices $V(\mathcal{L})$ and any edge $e\in E(\mathcal{G})$ such that $e$ connects two vertices in $\mathcal{L}$).
Of course, the induced subgraph naturally inherits a matrix product system from the full graph.
\begin{definition}
    We say that the subgraph $\mathcal{L}$ is a \defn{loop class} if for any vertices $v_1,v_2\in V(\mathcal{L})$, there is a non-empty directed path connecting $v_1$ and $v_2$, and we call $\mathcal{L}$ \defn{maximal} if it is maximal with this property.
\end{definition}

Two maximal loop classes necessarily have disjoint vertex sets, but not all vertices need to belong to a maximal loop class.
However, given any symbolic representation $\gamma=(\gamma_i)_{i=0}^{\infty}$, there is a unique maximal loop class $\mathcal{L}$ in which $\gamma$ is \defn{eventually}, meaning  there exists some $N$ such that $\gamma'\coloneqq(\gamma_i)_{i=N}^\infty$ is an element of $\Sigma(\mathcal{L})$.

We will let 
\begin{equation*}
    K_{\mathcal{L}} = \{x\in K: x \text{ has a symbolic representation that is eventually in }\mathcal{L}\}.
\end{equation*}
Every element of $K$ belongs to at least one set $K_{\mathcal{L}}$ for a maximal loop class $\mathcal{L}$, and at most two such sets.

Abusing notation slightly, given $\gamma$ which is eventually in $\mathcal{L}$, we write
\begin{equation*}
    \lambda(\mathcal{L},\gamma)=\lim_{n\to\infty}\frac{\log \norm{T(\gamma|n)}}{\log W(\gamma|n)}.
\end{equation*}
By \cref{l:left-prod}, for $k\geq N$ we have $\norm{T(\gamma|k)}\asymp\norm{T(\gamma'|k-N)}$ where $\gamma'$ is as above.
Since, also, $W(\gamma|k)\asymp W(\gamma'|k-N)$,  we have $\lambda(\mathcal{L},\gamma)=\lambda(\mathcal{L},\gamma')$ (and similarly for upper and lower Lyapunov exponents), where $\gamma'$ is a path in $\mathcal{L}$, justifying our notation.

We are primarily interested in three types of maximal loop classes.
\begin{definition}\label{d:loop-assum}
    \begin{enumerate}[nl,r]
        \item We say that a maximal loop class is \defn{irreducible} if the corresponding matrix product system is irreducible.
        \item We say that a maximal loop class is \defn{simple} if all cycles share the same edge set.
        \item We say that a maximal loop class is an \defn{essential class} if any vertex reachable from the maximal loop class by a directed path is also in the maximal loop class.
    \end{enumerate}
\end{definition}

For example, the IFS $\{\rho x,\rho x+1-\rho\}$ where $\rho$ is the reciprocal of the Golden ratio has an essential class with three elements, and two other singleton maximal loop classes.
These loop classes are all irreducible; for more details, see \cref{ex:bconv}.
Other examples are also given in \cref{s:exifs}.

Note that irreducibility is a statement about the IFS and does not depend on the choice of (non-zero) probabilities.

Any IFS satisfying the weak separation condition (such as those satisfying the finite neighbour condition) has a unique essential class by \cite[Proposition~3.3]{ruttoappear}.
In fact, the finite neighbour condition can be characterized by the property that the associated transition graph has a finite essential class \cite[Theorem~5.3]{ruttoappear}.

Moreover, the essential class is always irreducible; this is essentially shown in \cite[Lemma~3.9]{ruttoappear} (or \cite[Lemma~6.4]{fen2009} in the equicontractive case), but we include a self-contained proof here as an illustrative example:
\begin{proposition}\label{p:ess-irred}
    Let $\mathcal{G}$ be the transition graph of an IFS satisfying the finite neighbour condition with essential class $\mathcal{G}_{\ess}$.
    Then $\mathcal{G}_{\ess}$ is irreducible.
\end{proposition}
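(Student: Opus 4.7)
\medskip
\noindent\textbf{Proof plan.}
The plan is to reformulate irreducibility as strong connectivity of an auxiliary digraph, establish the two easy directions (every vertex has an incoming and an outgoing edge), and then push through the nontrivial mixing step adapting \cite[Lem.~3.9]{rut2020}.

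Concretely, introduce the digraph $\tilde{\mathcal{G}}_{\ess}$ with vertex set $\{(v,i):v\in V(\mathcal{G}_{\ess}),\,1\leq i\leq d(v)\}$ and a directed edge $(v_1,i)\to(v_2,j)$ whenever there exists an edge $e\in E(\mathcal{G}_{\ess})$ from $v_1$ to $v_2$ with $T(e)_{i,j}>0$. The irreducibility of the matrix product system associated with $\mathcal{G}_{\ess}$ is equivalent to the strong connectivity of $\tilde{\mathcal{G}}_{\ess}$: given strong connectivity, for each ordered pair of vertices in $\tilde{\mathcal{G}}_{\ess}$ one fixed path between them assembles into a finite family $\mathcal{H}$ witnessing the definition, using that the essential class is finite by \cref{d:fnc}. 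Conversely such an $\mathcal{H}$ visibly connects every two pairs.

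Two observations are immediate. First, as noted after the definition of transition matrix in \cref{ss:tg-im}, every column of every $T(e)$ carries a positive entry, since every neighbour of a child is generated by some extension $\sigma\ell$ of a word generating a neighbour of the parent; combined with the fact that $\mathcal{G}_{\ess}$ is strongly connected as a loop class, this shows every $(v_2,j)$ has incoming edges in $\tilde{\mathcal{G}}_{\ess}$. Second, if $f_i\in\vs(\Delta)$ is generated by $\sigma\in\Lambda_t$, then $S_\sigma(K)\cap\Delta^\circ\neq\emptyset$, so the self-similarity $S_\sigma(K)=\bigcup_\ell S_{\sigma\ell}(K)$ forces some $\sigma\ell$ to generate a neighbour of some child $\Delta'$ of $\Delta$. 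Because $\mathcal{G}_{\ess}$ is essential, $\vs(\Delta')\in V(\mathcal{G}_{\ess})$, so $(v_1,i)$ has an outgoing edge in $\tilde{\mathcal{G}}_{\ess}$.

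The main obstacle is to upgrade ``every vertex has incoming and outgoing edges'' to full strong connectivity. The plan is to use the self-replicating structure of the essential class: fix $(v_1,i_1)$ and $(v_2,j_2)$, choose any net interval $\Delta_1$ with $\vs(\Delta_1)=v_1$, and let $\sigma$ generate $f_{i_1}$. The image $S_\sigma(K)$ intersects $\Delta_1^\circ$ nontrivially, so for all sufficiently small generations $s$ the net intervals lying under $\Delta_1$ that are hit by descendants of $\sigma$ reproduce, via $S_\sigma$, the entire neighbour structure of $K$ at scale $s/|r_\sigma|$. Since $v_2$ occurs as a neighbour set somewhere in $\mathcal{G}_{\ess}$, and by strong connectivity of $\mathcal{G}_{\ess}$ it is reachable from the root, the replicated copy under $\Delta_1$ contains a net interval $\Delta_2\subset\Delta_1$ with $\vs(\Delta_2)=v_2$ and with $f_{j_2}\in\vs(\Delta_2)$ generated by an extension $\sigma\tau$ of $\sigma$. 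Reading off the symbolic representation from $\Delta_1$ to $\Delta_2$ yields an admissible path $\eta$ in $\mathcal{G}_{\ess}$ from $v_1$ to $v_2$ with $T(\eta)_{i_1,j_2}>0$, i.e.\ a directed path in $\tilde{\mathcal{G}}_{\ess}$.

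The hard part is the ``copy of $K$ inside $\Delta_1$'' argument, because net intervals and neighbour sets are defined in terms of \emph{all} words in $\Lambda_s$, not just extensions of $\sigma$; one must check that the extra neighbours contributed by competing words do not destroy the appearance of $v_2$ at the appropriate child. This is exactly the content of the argument in \cite[Lem.~3.9]{rut2020} (and \cite[Lem.~6.4]{fen2009} in the equicontractive case), which will be invoked.
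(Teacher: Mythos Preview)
Your overall strategy is the same as the paper's: reduce irreducibility to finding, for each pair $(v_1,i)$, $(v_2,j)$, a path $\eta$ in $\mathcal{G}_{\ess}$ with $T(\eta)_{i,j}>0$, and obtain such a path by pushing a copy of the neighbour structure of $K$ inside a net interval $\Delta$ with $\vs(\Delta)=v_1$ via a word $\sigma$ extending the generator $\xi$ of neighbour $i$. Your auxiliary digraph $\tilde{\mathcal{G}}_{\ess}$ is a harmless reformulation, and your observations about incoming and outgoing edges are correct and used implicitly in the paper as well.

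Where your sketch diverges is in the description of the ``hard part''. You propose to locate directly a net interval $\Delta_2\subset\Delta$ with $\vs(\Delta_2)=v_2$ and neighbour $j$ generated by an extension of $\sigma$. As you yourself note, competing words in $\Lambda_s$ can add extra neighbours, and in fact for a generic $v_2$ there is no reason $S_\sigma$ applied to a net interval of type $v_2$ should again be a net interval of type $v_2$. The paper (and \cite[Lem.~3.9]{rut2020}) does \emph{not} attempt this. Instead it fixes once and for all a net interval $\Delta_0$ in the essential class with $\rmax(\Delta_0)$ maximal and then $\#\vs(\Delta_0)$ maximal; these two maximality choices force $S_\sigma(\Delta_0)$ to be a genuine net interval (no subdivision occurs, by maximality of $\rmax$) with exactly the neighbour set $\vs(\Delta_0)$ (no new neighbours, by maximality of $\#\vs(\Delta_0)$). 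This yields a path $\eta_1$ from $v_1$ to $\vs(\Delta_0)$ whose $i$th row is strictly positive. One then concatenates with any path $\phi$ from $\vs(\Delta_0)$ to $v_2$ (strong connectivity of $\mathcal{G}_{\ess}$) and uses the column property to conclude $T(\eta_1\phi)_{i,j}>0$. So the target vertex $v_2$ is reached in two stages---first the special maximal vertex $\vs(\Delta_0)$, then $v_2$---rather than in one. Since you plan to invoke the cited lemma anyway your proposal is not wrong, but your informal account of what that lemma actually proves should be corrected.
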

\begin{proof}
    It suffices to show that for any $v_1,v_2\in\mathcal{G}_{\ess}$, $1\leq i\leq d(v_1)$, $1\leq j\leq d(v_2)$, there exists a path $\eta$ from $v_1$ to $v_2$ such that $T(\eta)_{i,j}>0$.
    Let $\Delta\in\mathcal{F}$ be some net interval with $\vs(\Delta)=v_1$, let $\Delta_0\in\mathcal{F}_{t_0}$ be a net interval with neighbour set in the essential class such that $\rmax(\Delta_0)$ is maximal and $\#\vs(\Delta_0)$ maximal among such net intervals.
    Let $\phi$ be a path from $\vs(\Delta_0)$ to $v_2$.
    Let $\xi$ generate neighbour $i$ of $\Delta$ and let $\sigma\in\mathcal{I}^*$ have prefix $\xi$, $r_\sigma>0$ and $S_\sigma([0,1])\subseteq\Delta$; such a $\sigma$ must necessarily exist since $\Delta^\circ\cap S_\xi(K)\neq\emptyset$.
    Write $\vs(\Delta_0)=\{f_1,\ldots,f_k\}$ with $f_1<\cdots<f_k$ where each neighbour $f_j$ is generated by some word $\omega_j\in\Lambda_{t_0}$.
    We have $\Delta_0=[S_{\tau_1}(z_1),S_{\tau_2}(z_2)]$ where $\tau_1,\tau_2\in\Lambda_{t_0}$ and $z_1,z_2\in\{0,1\}$.

    We now show that $S_\sigma(\Delta_0)$ is indeed a net interval.
    Note that the words $\sigma\tau_j$ and $\sigma\omega_j$ are in $\Lambda_{r_\sigma t_0}$ by direct computation.
    Suppose for contradiction $S_\sigma(\Delta_0)$ is not a net interval.
    Without loss of generality, let $\omega_1$ have $|r_{\omega_1}|=R_{\max}(\Delta_0)\diam(\Delta_0)$.
    Since $S_{\omega_1}(K)\cap\Delta_0^\circ\neq\emptyset$, we have $S_{\sigma\omega_1}(K)\cap S_\sigma(\Delta_0)^\circ\neq\emptyset$ so there exists some net interval $\Delta'\subseteq S_\sigma(\Delta_0)$ where the inclusion is proper and $S_{\sigma\omega_1}(K)\cap(\Delta')^\circ\neq\emptyset$.
    But then $R_{\max}(\Delta')>R_{\max}(\Delta_0)$, contradicting the choice of $\Delta_0$.
    Thus $\Delta_1\coloneqq S_\sigma(\Delta_0)$ is indeed a net interval.

    Moreover, $\Delta_1$ has neighbours generated by the words $S_\sigma\circ S_{\omega_i}$, and since $r_\sigma>0$, $T_{\Delta_1}^{-1}=T_{\Delta_0}^{-1}\circ S_\sigma^{-1}$ and thus $\vs(S_\sigma(\Delta_0))\supseteq\vs(\Delta_0)$.
    Equality then follows by the maximality of $\#\vs(\Delta_0)$.

    As $\xi$ is a prefix of $\sigma$, write $\sigma=\xi\tau$ for some $\tau\in\mathcal{I}^*$.
    Let $\Delta$ have symbolic representation $\phi_0$.
    Since $\Delta_1\subseteq\Delta$, there exists $\eta_1$ such that $\Delta_1$ has symbolic representation $\phi_0\eta_1$.
    Since each neighbour of $\Delta_1$ is generated by a word $\sigma\omega_j=\xi\tau\omega_j$, by definition of the transition matrix and choice of $\xi$, row $i$ of the matrix $T(\eta_1)$ is strictly positive.
    But then $\eta\coloneqq\eta_1\phi$ is an admissible path from $v_1$ to $v_2$, and since every column of a transition matrix has a positive entry, row $i$ of the matrix $T(\eta)$ is strictly positive as well.
\end{proof}

\begin{remark}
    In fact, as we argued in the above proof, the essential class satisfies a somewhat stronger form of irreducibility: for any $v_1,v_2\in\mathcal{G}_{\ess}$ and $1\leq j\leq d(v_2)$, there exists a path $\eta$ from $v_1$ to $v_2$ such that row $j$ of $T(\eta)$ is strictly positive.
    This property is closely related to the key feature of the quasi-product structure under the weak separation condition demonstrated by Feng and Lau \cite{fl2009}.
    Moreover, under somewhat stronger hypotheses (satisfied, for example, when the attractor $K$ is an interval), the path $\eta$ can be chosen such that $T(\eta)$ is a positive matrix (see \cite{hhn2018} for a proof in the equicontractive case, but the general case follows similarly).
\end{remark}

\section{Sets of local dimensions of self-similar measures}\label{s:mf-properties}
We continue to use the notation of the previous section.
In particular, we assume that $\mathcal{G}$ is the matrix product system associated with an IFS that satisfies the finite neighbour condition.

\subsection{Basic results about local dimensions and periodic points}
The following notion is a well-studied way of quantifying the singularity of the measure $\mu$ with respect to Lebesgue measure at a point $x\in\supp\mu=K$.
\begin{definition}
    Let $x\in K$ be arbitrary.
    Then the \defn{lower local dimension of $\mu$ at $x$} is given by
    \begin{equation*}
        \underline{\dim}_{\loc}\mu(x)=\liminf_{t\to 0}\frac{\log \mu(B(x,t))}{\log t}
    \end{equation*}
    and the \defn{upper local dimension} is given similarly with the limit infimum replaced by the limit supremum.
    When the values of the upper and lower local dimension agree, we call the shared value the \defn{local dimension of $\mu$ at $x$}.
\end{definition}

Intuitively, the multifractal analysis of self-similar sets satisfying the finite neighbour condition is related to the multifractal analysis of the corresponding matrix product system.
However, the exact relationship is somewhat more complicated to establish: while the Lyapunov exponent of a path $\gamma$ depends only on the single sequence of edges determining $\gamma$, the local dimension of $\mus$ at a point $x\in K$ can also depend on net intervals which are adjacent to net intervals containing $x$.
This happens when $x$ is the shared boundary point of two distinct net intervals, but it can also happen when $x$ is approximated very well by boundary points (so that balls $B(x,r)$ overlap significantly with neighbouring net intervals, for many values of $r$).

A point $x\in K$ is said to be \defn{periodic} if it has a symbolic representation that is eventually a periodic path.
For such points, this issue with overlaps is easy to resolve.
A boundary point of a net interval is a periodic point and all elements of a simple loop class are periodic points.
Indeed, for each simple loop class there is a cycle $\theta$ such that all elements in the loop class have a symbolic representation of the form $\gamma_0\overline{\theta}$ where $\overline{\theta}$ is the infinite periodic path with cycle $\theta$.
If $x$ has two distinct symbolic representations, then $x$ is necessarily the endpoint of a net interval so the finite neighbour condition ensures that both symbolic representations are periodic points.
Note that a periodic point can be an interior point (in the sense of \cref{d:syr}), but every non-periodic point is interior.

By \cite[Proposition~3.15]{ruttoappear} (see also \cite[Proposition~2.7]{hhn2018}), we have the following simple formula for the local dimension of a periodic point:
\begin{proposition}[\cite{ruttoappear}, Proposition~3.15]\label{p:per-K}
    Suppose $x$ is an interior, periodic point with unique symbolic representation $\gamma$ which is eventually in the loop class $\mathcal{L}$.
    Let $\theta$ be any period of $\gamma$ and let $\overline{\theta}\in\Sigma(\mathcal{L})$ denote the path formed by repeating $\theta$ infinitely.
    Then the local dimension exists at $x$ and is given by
    \begin{equation*}
        \dim_{\loc}\mus(x)=\frac{\log\spr(T(\theta))}{\log W(\theta)}=\lambda(\mathcal{L},\overline{\theta}).
    \end{equation*}
    Otherwise, $x$ has two distinct symbolic representations with periods $\theta_1$ and $\theta_2$ and
    \begin{equation*}
        \dim_{\loc}\mus(x)=\min\left\{\frac{\log\spr(T(\theta_1))}{\log W(\theta_1)},\frac{\log\spr(T(\theta_2))}{\log W(\theta_2)}\right\}.
    \end{equation*}
\end{proposition}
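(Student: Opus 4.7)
The strategy is to transfer the local dimension calculation to the matrix norms $\|T(\gamma|k)\|$ via \cref{c:dt-form} and then extract the Lyapunov limit using periodicity and \cref{p:per-ldim}. The key step is to show that a ball $B(x,r)$ is comparable in $\mus$-measure to a single net interval $\Delta_k$ with $\diam\Delta_k\asymp r$.

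Interior case. Write $\Delta_k$ for the unique net interval of generation $W(\gamma|k)$ with $[\Delta_k]=\gamma|k$, and set $y_k:=T_{\Delta_k}^{-1}(x)\in(0,1)$. By \cref{t:ttype}, the position indices $q_{k+1}:=q(\Delta_k,\Delta_{k+1})$ and ratios $\rho_{k+1}:=\diam\Delta_{k+1}/\diam\Delta_k$ depend only on $\vs(\Delta_k)$, hence are eventually periodic in $k$ with period $|\theta|$. The affine recursion $y_{k+1}=(y_k-q_{k+1})/\rho_{k+1}$ composed along one full period becomes an affine expansion $L$ of $\R$, so the constraint $y_k\in[0,1]$ for all $k$ forces $y_{n_0}$ (for $n_0$ the end of the non-periodic prefix of $\gamma$) to be the unique fixed point of $L$. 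Thus $y_k$ is eventually periodic and in particular bounded away from $\{0,1\}$, giving $d(x,\partial\Delta_k)\asymp\diam\Delta_k\asymp W(\gamma|k)$. Choosing $k$ so that $W(\gamma|k)\asymp r$ then yields $\Delta_k\subseteq B(x,r)\subseteq\Delta_{k-c}$ for some bounded $c$, so $\mus(B(x,r))\asymp\mus(\Delta_k)\asymp\|T(\gamma|k)\|$ by \cref{c:dt-form}. Taking the limit gives $\dim_{\loc}\mus(x)=\lambda(\mathcal{L},\gamma)=\lambda(\mathcal{L},\overline{\theta})$, where \cref{l:left-prod} is used to discard the finite non-periodic prefix, and the closed-form expression then follows from \cref{p:per-ldim}.

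Non-interior case. The finite neighbour condition forces both symbolic representations $\gamma_1,\gamma_2$ of $x$ to be eventually periodic with cycles $\theta_1,\theta_2$, since $x$ remains a boundary point of adjacent net intervals at all sufficiently small scales and only finitely many neighbour-set transitions are available at boundary points. The interior argument (now with $y_k$ identically $0$ or $1$) applied to each side produces $\mus(\Delta_k^{(i)})\asymp r^{\alpha_i}$, where $\alpha_i=\log\spr T(\theta_i)/\log W(\theta_i)$ and $\diam\Delta_k^{(i)}\asymp r$. Since $B(x,r)$ meets only a bounded number of net intervals at scale $r$ (again by finite neighbour), we obtain $\mus(B(x,r))\asymp r^{\alpha_1}+r^{\alpha_2}\asymp r^{\min(\alpha_1,\alpha_2)}$, yielding the formula.

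The main obstacle is the interior fixed-point argument for $y_k$: it is what guarantees that $B(x,r)$ is comparable to a single net interval of matching scale, so that one sequence of matrix norms along $\gamma$ controls the local dimension. Without it, $B(x,r)$ could split across multiple adjacent net intervals in a non-uniform way at different scales, and the clean estimate $\mus(B(x,r))\asymp\mus(\Delta_k)$ would fail.
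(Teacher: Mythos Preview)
The paper does not prove this proposition; it is quoted directly from \cite[Prop.~3.15]{rut2020} (with a pointer also to \cite[Prop.~2.7]{hhn2018}), so there is no in-paper argument to compare against and your sketch must stand on its own.

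Your approach is sound, and the fixed-point argument for the normalized position $y_k$ is a clean way to pin down the relative location of $x$ inside $\Delta_k$.  There is, however, a small gap in the interior case.  The expanding affine map $L$ does force $y_{n_0}$ to be its unique fixed point, hence $(y_k)$ is eventually periodic --- but nothing forces that fixed point to lie in the \emph{open} interval $(0,1)$.  An interior point in the sense of \cref{d:syr} (unique symbolic representation) can still be an endpoint of every $\Delta_k$, provided the adjacent region is a gap of $K$ rather than another net interval: think of $x=0$, $x=1$, or any endpoint of a complementary interval of $K$.  In that situation $y_k\equiv 0$ (or $\equiv 1$) eventually, and your claim ``bounded away from $\{0,1\}$'' fails, so the sandwich $\Delta_k\subseteq B(x,r)\subseteq\Delta_{k-c}$ is not available as written.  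The repair is easy: when $y_k\equiv 0$, the finite neighbour condition yields a uniform $\epsilon>0$ with $(x-\epsilon\diam\Delta_k,x)\cap K=\emptyset$ for all large $k$, so $B(x,r)\cap K$ is still trapped between two nested $\Delta_k$'s and the rest of the argument goes through unchanged.  This is exactly the edge case treated separately in the proof of \cref{p:nper-dim}.

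One notational quibble in the non-interior case: writing $\mus(\Delta_k^{(i)})\asymp r^{\alpha_i}$ is loose, since $\norm{T(\theta^n)}/(\spr T(\theta))^n$ need not be bounded above and below --- only the logarithmic ratio converges.  This is harmless once you pass to the $\log/\log$ limit, but it is worth stating precisely.
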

\begin{corollary}
    If $x\in K_{\mathcal{L}}$ is a periodic point, then $\dim_{\loc}\mus(x)$ belongs to
    \begin{equation*}
        \bigcup_{i=1}^m\{\lambda(\mathcal{L}_i,\gamma):\gamma\in\Sigma(\mathcal{L}_i)\}
    \end{equation*}
    where $\mathcal{L}_1,\ldots,\mathcal{L}_m$ is a complete list of the maximal loop classes in $\mathcal{G}$.
\end{corollary}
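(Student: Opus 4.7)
The plan is to deduce the corollary directly from \cref{p:per-K}, reducing to two cases according to whether the periodic point has one or two symbolic representations, and then identifying each periodic orbit with a path in one of the maximal loop classes.

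First, I would apply \cref{p:per-K} to $x$. In the case that $x$ is interior with a unique symbolic representation $\gamma$ eventually in some loop class $\mathcal{L}'$, the proposition gives $\dim_{\loc}\mu(x) = \lambda(\mathcal{L}', \overline{\theta})$ for any period $\theta$ of $\gamma$. Since $\gamma$ is eventually periodic, the loop class $\mathcal{L}'$ in which $\gamma$ is eventually contained must itself contain the cycle $\theta$; by definition of a loop class, any such $\mathcal{L}'$ is contained in a unique maximal loop class $\mathcal{L}_i$. Because the cycle $\theta$ lives entirely in $\mathcal{L}_i$, the path $\overline{\theta}$ belongs to $\Sigma(\mathcal{L}_i)$, and thus $\dim_{\loc}\mu(x) = \lambda(\mathcal{L}_i, \overline{\theta})$ lies in the claimed union.

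In the remaining case, $x$ has two distinct symbolic representations with periods $\theta_1$ and $\theta_2$, and
\begin{equation*}
    \dim_{\loc}\mu(x) = \min\left\{\frac{\log\spr(T(\theta_1))}{\log W(\theta_1)}, \frac{\log\spr(T(\theta_2))}{\log W(\theta_2)}\right\}.
\end{equation*}
By \cref{p:per-ldim}, each quantity on the right equals $\lambda(\overline{\theta_k})$, and each cycle $\theta_k$ lies in a unique maximal loop class $\mathcal{L}_{i_k}$, so $\overline{\theta_k} \in \Sigma(\mathcal{L}_{i_k})$ and the corresponding value equals $\lambda(\mathcal{L}_{i_k}, \overline{\theta_k})$. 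The minimum of two numbers in the union $\bigcup_{i=1}^m \{\lambda(\mathcal{L}_i, \gamma) : \gamma \in \Sigma(\mathcal{L}_i)\}$ is attained at one of them and is therefore itself in the union.

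There is no real obstacle here: the content of the argument is entirely encoded in \cref{p:per-K} and \cref{p:per-ldim}, and the only verification needed is the bookkeeping that a cycle $\theta$ realized along an eventually-periodic symbolic representation sits inside a unique maximal loop class, which is immediate from the fact that maximal loop classes have disjoint vertex sets and that every cycle is contained in some maximal loop class.
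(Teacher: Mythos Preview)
Your proposal is correct and follows exactly the intended route: the corollary is stated in the paper without proof because it is an immediate consequence of \cref{p:per-K}, and your two-case analysis together with the observation that every cycle lies in a unique maximal loop class is precisely the bookkeeping needed.
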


More generally, when the Lyapunov exponent exists or the local dimension exists, we can relate the two notions.
\begin{proposition}\label{p:nper-dim}
    Suppose $x\in K$ is an interior point with exactly one symbolic representation $\gamma$ that is eventually in the maximal loop class $\mathcal{L}$.
       \begin{enumerate}[nl,r]
        \item Then
            \begin{equation*}
                \alpha_{\min}(\mathcal{L})\leq\underline{\lambda}(\mathcal{L},\gamma)\leq \overline{\dim}_{\loc}\mus(x)\leq\overline{\lambda}(\mathcal{L},\gamma)\leq\alpha_{\max}(\mathcal{L}).
            \end{equation*}
        \item If $\lambda(\mathcal{L},\gamma)$ exists, then $\overline{\dim}_{\loc}\mus(x)=\lambda(\gamma)$.
        \item If $\dim_{\loc}\mus(x)$ exists, then $\underline{\lambda}(\gamma)=\dim_{\loc}\mus(x)$.
    \end{enumerate}
\end{proposition}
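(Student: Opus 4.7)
The plan is as follows. The extreme inequalities in (i)---$\alpha_{\min}(\mathcal{L})\le\underline\lambda(\mathcal{L},\gamma)$ and $\overline\lambda(\mathcal{L},\gamma)\le\alpha_{\max}(\mathcal{L})$---follow immediately from \cref{t:symb-lset}(i), since by \cref{l:left-prod} the Lyapunov exponents are unaffected by truncating a finite prefix of $\gamma$, allowing us to regard $\gamma$ as a path inside $\mathcal{L}$. The work is therefore concentrated in the two middle inequalities $\underline\lambda(\mathcal{L},\gamma)\le\overline{\dim}_\loc\mus(x)\le\overline\lambda(\mathcal{L},\gamma)$. Let $\Delta_n\in\mathcal{F}$ denote the net interval whose symbolic representation is $\gamma|n$, so that $\{x\}=\bigcap_n\Delta_n$; the main tools will be \cref{c:dt-form} (which gives $\mus(\Delta_n)\asymp\norm{T(\gamma|n)}$) together with $\diam(\Delta_n)\asymp W(\gamma|n)$, each with constants independent of $n$.

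For the upper bound $\overline{\dim}_\loc\mus(x)\le\overline\lambda(\mathcal{L},\gamma)$, given small $r>0$ I would choose $n=n(r)$ maximal with $\diam(\Delta_n)\le r$. Since $W(\gamma|{n-1})/W(\gamma|n)\in[W_{\min},W_{\max}]$ is bounded, this forces $r\asymp W(\gamma|n)$. Because $x\in\Delta_n\subseteq B(x,r)$, we obtain $\mus(B(x,r))\ge\mus(\Delta_n)\gtrsim\norm{T(\gamma|n)}$; dividing by $\log r<0$ (which reverses the inequality) and taking $\limsup$ as $r\to 0$ gives the desired bound. Running the identical estimate with $\liminf$ in place of $\limsup$ produces the auxiliary inequality $\underline{\dim}_\loc\mus(x)\le\underline\lambda(\mathcal{L},\gamma)$, which will feed into (iii).

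The reverse middle inequality $\underline\lambda(\mathcal{L},\gamma)\le\overline{\dim}_\loc\mus(x)$ is the delicate part, since it requires an \emph{upper} bound on $\mus(B(x,r))$ along suitable scales. Pick $(n_k)$ with $\tfrac{\log\norm{T(\gamma|n_k)}}{\log W(\gamma|n_k)}\to\underline\lambda(\mathcal{L},\gamma)$; since $x$ is interior, it lies in the open interior of each $\Delta_{n_k}$. The aim is to produce $r_k\asymp W(\gamma|n_k)$ with $\mus(B(x,r_k))\lesssim\norm{T(\gamma|n_k)}$, from which $\overline{\dim}_\loc\mus(x)\ge\underline\lambda(\mathcal{L},\gamma)$ follows by the same log-quotient manipulation. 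The hardest case---and the main obstacle to the whole argument---is when $x$ sits very close to $\partial\Delta_{n_k}$: the largest $r_k$ for which $B(x,r_k)\subseteq\Delta_{n_k}$ may then be much smaller than $\diam(\Delta_{n_k})$, so the straightforward inclusion does not match scales. Overcoming this requires exploiting the finite neighbour condition: because $\mathcal{G}$ has only finitely many neighbour sets, $B(x,r_k)$ at the scale $r_k\asymp W(\gamma|n_k)$ is covered by a uniformly bounded number of net intervals at generation $\asymp r_k$, and the combinatorics of the finite transition graph bound their total measure by a constant multiple of $\norm{T(\gamma|n_k)}$.

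With the middle inequalities in hand, parts (ii) and (iii) drop out. When $\lambda(\mathcal{L},\gamma)$ exists, the sandwich in (i) collapses, giving $\overline{\dim}_\loc\mus(x)=\lambda(\gamma)$. When $\dim_\loc\mus(x)$ exists, combining $\underline\lambda(\gamma)\le\overline{\dim}_\loc\mus(x)=\dim_\loc\mus(x)=\underline{\dim}_\loc\mus(x)$ with the auxiliary inequality $\underline{\dim}_\loc\mus(x)\le\underline\lambda(\gamma)$ forces the equality $\underline\lambda(\gamma)=\dim_\loc\mus(x)$.
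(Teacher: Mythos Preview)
Your outer inequalities and the bound $\overline{\dim}_{\loc}\mus(x)\le\overline\lambda(\mathcal{L},\gamma)$ are fine and match the paper. The gap is in your argument for $\underline\lambda(\mathcal{L},\gamma)\le\overline{\dim}_{\loc}\mus(x)$.

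You first fix $(n_k)$ to realize the lower Lyapunov exponent, and then try to produce $r_k\asymp W(\gamma|n_k)$ with $\mus(B(x,r_k))\lesssim\norm{T(\gamma|n_k)}$. When $x$ is close to $\partial\Delta_{n_k}$ you fall back on the claim that the boundedly many net intervals of generation $\asymp r_k$ meeting $B(x,r_k)$ have total measure $\lesssim\norm{T(\gamma|n_k)}$. This is not justified and is in general false: adjacent net intervals at the same scale can carry wildly different $\mus$-mass (think of a biased Bernoulli convolution, where neighbouring net intervals of generation $t$ can have measures of order $p^n$ versus $(1-p)^n$). The finite neighbour condition bounds the \emph{number} of such intervals, not the ratio of their measures, and nothing in the transition-graph combinatorics forces the symbolic representation of the neighbouring interval to have norm comparable to $\norm{T(\gamma|n_k)}$.

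The paper sidesteps this entirely by reversing the order of choices. Rather than fixing $(n_k)$ to realize $\underline\lambda$, it uses interiority of $x$ (together with the finiteness of neighbour sets) to find a $\rho>0$ and an increasing sequence $(n_k)$ for which $B(x,\rho W(\gamma|n_k))\subseteq\Delta(\gamma|n_k)$; no control over adjacent intervals is needed. Along this geometric subsequence one gets
\[
\overline{\dim}_{\loc}\mus(x)\ \ge\ \limsup_{k}\frac{\log\norm{T(\gamma|n_k)}}{\log W(\gamma|n_k)}\ \ge\ \underline\lambda(\mathcal{L},\gamma),
\]
the last step being trivial since a $\limsup$ along any subsequence dominates the global $\liminf$. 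In other words, the inequality you want only needs \emph{some} sequence of scales with the ball-in-net-interval inclusion; it does not need that sequence to witness $\underline\lambda$. Once you see this, your handling of (ii), (iii), and the auxiliary inequality $\underline{\dim}_{\loc}\mus(x)\le\underline\lambda(\gamma)$ is correct and agrees with the paper.
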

\begin{proof}
   By assumption, $x$ belongs to $K_{\mathcal{L}}$ and $\mathcal{L}$ is unique with this property.
    \begin{enumerate}[nl,r]
        \item We have already seen that $\overline{\lambda}(\mathcal{L},\gamma)\leq\alpha_{\max}(\mathcal{L})$ and $\underline{\lambda}(\mathcal{L},\gamma)\geq\alpha_{\min}(\mathcal{L})$ in \cref{t:symb-lset}.

For any $t>0$, we have $B(x,t)\supseteq\Delta(\gamma|t)$ and therefore
            \begin{align*}
               \overline{\dim}_{\loc}\mus(x)= \limsup_{t\to 0}\frac{\log\mus(B(x,t))}{\log t}&\leq\limsup_{t\to 0}\frac{\log \norm{T(\gamma|t)}}{\log t}=\overline{\lambda}(\mathcal{L},\gamma).
            \end{align*}
            
            If $x$ is not a boundary point of some net interval, since there are only finitely many neighbour sets, there exists some $\rho>0$ and a monotonically increasing sequence $(n_k)_{k=1}^\infty$ with $n_1>N$ such that for each $k\in\N$ we have $ B(x,\rho W(\gamma|n_k))\subseteq\Delta(\gamma|n_k)$.
            Since $\rho W(\gamma|n_k)\asymp W(\gamma|n_k)$ we have
            \begin{align*}
                 \overline{\dim}_{\loc}\mus(x)=\limsup_{t\to 0}\frac{\log\mus(B(x,t))}{\log t} &\geq\limsup_{k\to\infty}\frac{\log\mus(B(x,\rho W(\gamma|n_k)))}{\log \rho W(\gamma|n_k)}\\
                                                                &\geq\limsup_{k\to\infty}\frac{\log \norm{T(\gamma|n_k)}}{\log W(\gamma|n_k)}\\
                                                                &\geq\underline{\lambda}(\mathcal{L},\gamma).                                                       
            \end{align*}
            as required.

            Otherwise, $x$ is a boundary point with a unique symbolic representation.
            In this case, for suitable $\rho>0$ and large $n$, $B(x,\rho W(\gamma|n))\cap K_{\mathcal{L}}\subseteq\Delta(\gamma|n)$, so we can argue similarly.

        \item This is immediate from (i) as $\underline{\lambda}(\mathcal{L},\gamma)=\overline{\lambda}(\mathcal{L},\gamma)$.
        \item The same argument as (i) shows that $\underline{\dim}_{\loc}\mus(x)\leq\underline{\lambda}(\mathcal{L},\alpha)$, from which the result follows.
    \end{enumerate}
\end{proof}

\subsection{Sets of local dimensions for simple and irreducible loop classes}
We begin by noting that periodic points are dense in the set of local dimensions.
\begin{proposition}
    Let $\mathcal{L}$ be an irreducible, maximal loop class that is not simple.
    Then the set of local dimensions at interior periodic points is dense in $[\alpha_{\min}(\mathcal{L}),\alpha_{\max}(\mathcal{L})]$.
\end{proposition}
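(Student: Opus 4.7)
The strategy is to combine \cref{l:per-dens} (density of Lyapunov exponents of periodic paths in $[\alpha_{\min}(\mathcal{L}),\alpha_{\max}(\mathcal{L})]$ for the irreducible matrix product system $\mathcal{L}$) with \cref{p:per-K} (the explicit formula for local dimension at periodic points).

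Given $\alpha\in[\alpha_{\min}(\mathcal{L}),\alpha_{\max}(\mathcal{L})]$ and $\varepsilon>0$, I would first apply \cref{l:per-dens} to $\mathcal{L}$ to obtain a cycle $\theta$ in $\mathcal{L}$ satisfying $|\lambda(\mathcal{L},\overline{\theta})-\alpha|<\varepsilon$. Since every vertex of the transition graph $\mathcal{G}$ is reachable from the root $\vs([0,1])$, there exists a finite admissible path $\phi\in\Sigma_0^*$ from $\vs([0,1])$ to the starting vertex $v$ of $\theta$. The infinite path $\phi\overline{\theta}\in\Sigma_0$ is then the symbolic representation of some periodic point $x\in K_{\mathcal{L}}$, and $x$ is eventually in $\mathcal{L}$ with period $\theta$.

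The ideal case is when $x$ is an interior point: then \cref{p:per-K} gives $\dim_{\loc}\mus(x)=\lambda(\mathcal{L},\overline{\theta})$, which lies within $\varepsilon$ of $\alpha$, establishing density.

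The main obstacle is the possibility that $x$ is a boundary point of some net interval. In that case $x$ has a second eventually periodic representation $\phi'\overline{\theta'}$, and \cref{p:per-K} gives $\dim_{\loc}\mus(x)=\min\{\lambda(\overline{\theta}),\lambda(\overline{\theta'})\}$, which need not equal $\lambda(\overline{\theta})$. To circumvent this, I would exploit the richness of the irreducible loop class $\mathcal{L}$: if $\mathcal{L}$ contains a non-trivial cycle $\eta$ at $v$ distinct from $\theta$, then the periodic points $x_n$ represented by $\phi\,\eta^n\,\overline{\theta}$ (for $n\in\N$) are pairwise distinct and each has Lyapunov exponent $\lambda(\overline{\theta})$, since their asymptotic behaviour is $\overline{\theta}$. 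The key claim to verify is that at least one $x_n$ is an interior point; here one uses that being a boundary point imposes a restrictive coincidence condition (two distinct eventually periodic representations yielding the same point under the similarity maps), and this coincidence cannot persist along the entire family $\{x_n\}$. In the degenerate situation where $\mathcal{L}$ consists of a single self-loop, the same argument is run by varying the prefix $\phi$ itself, using that many distinct prefixes map $v$ into distinct positions in $K$. Either way, the resulting interior periodic point has local dimension $\lambda(\mathcal{L},\overline{\theta})$ by \cref{p:per-K}, completing the density argument.

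The hard part is the interiority step: ruling out, in every configuration of an irreducible loop class, the pathological scenario in which every periodic point with eventual period $\theta$ lies on the boundary of a net interval.
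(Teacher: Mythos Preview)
Your overall strategy—combining the density of periodic Lyapunov exponents (\cref{l:per-dens}) with the local dimension formula for periodic points (\cref{p:per-K})—is correct and matches the paper's. The divergence is in how interiority is secured, and there your argument has a genuine gap.

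You propose to repair a non-interior periodic point by inserting auxiliary cycles $\eta^n$ before the period $\theta$, claiming that ``at least one $x_n$ is an interior point'' because the boundary coincidence ``cannot persist along the entire family''. This is not justified. Being a boundary point is a tail phenomenon: if the period $\theta$ itself tracks the boundary of net intervals at every scale (for instance, if each edge of $\theta$ corresponds to the leftmost child of its parent), then \emph{any} finite prefix $\phi\eta^n$ followed by $\overline{\theta}$ still yields a point that is eventually on that same boundary. The entire family $\{x_n\}$ could consist of boundary points. Your fallback of varying the prefix $\phi$ has exactly the same defect, since interiority depends only on the tail $\overline{\theta}$, not on the prefix.

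The paper instead modifies the construction inside the proof of \cref{l:per-dens}. When $\mathcal{L}$ is not simple, there exists a finite path $\xi\in\Sigma^*(\mathcal{L})$ such that, if $\xi$ is realized by $(\Delta_i)_{i=0}^m$, then $\Delta_m\subseteq\Delta_0^\circ$ (this is the same $\xi$ used at the start of the proof of \cref{t:gamma-proj}). One replaces the connecting paths $\eta_k\in\mathcal{H}$ from \cref{l:irredp} by bounded-length paths passing through $\xi$; the spectral radius and weight estimates survive up to uniform constants, so the approximation $\lambda(\overline{\theta_k})\to\lambda(\gamma)$ is unaffected. The resulting cycles $\theta_k$ now each contain $\xi$, and any infinite path containing $\xi$ infinitely often is automatically the symbolic representation of an interior point. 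Hence $\overline{\theta_k}$ corresponds to an interior periodic point with the desired Lyapunov exponent. The missing ingredient in your approach is precisely this constructive device—forcing interiority by inserting $\xi$ into the period itself—rather than attempting to avoid the boundary by perturbing the prefix.
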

\begin{proof}
    This follows by slightly modifying the proof of \cref{l:per-dens} by choosing the paths $\eta_k$ such that they are also interior paths.
    Thus the corresponding point in $K_{\mathcal{L}}$ is interior periodic and has local dimension equal to the symbolic local dimension by \cref{p:nper-dim}.
    Then the result follows from \cref{c:ly-set}, which states that $\{\underline{\lambda}(\mathcal{L}, \gamma):\gamma \text{ eventually in } \mathcal{L}\}=[\alpha_{\min}(\mathcal{L}), \alpha_{\max}(\mathcal{L})]$.
\end{proof}
Our next result establishes a converse to \cref{p:nper-dim}.
\begin{theorem}\label{t:gamma-proj}
    Let $\mathcal{L}$ be an irreducible, maximal loop class that is not simple.
    Then
    \begin{equation*}
        [\alpha_{\min}(\mathcal{L}),\alpha_{\max}(\mathcal{L})]\subseteq\{\dim_{\loc}\mus(x):x\in K_{\mathcal{L}},x\text{ interior}\}.
    \end{equation*}
    % Then $K(\mathcal{L},\alpha)\neq\emptyset$ if and only if $\Sigma(\mathcal{L},\alpha)\neq\emptyset$ if and only if $\alpha\in[\alpha_{\min}(\mathcal{L}),\alpha_{\max}(\mathcal{L})]$.
    % Moreover,
    % \begin{equation*}
        % \dim_H K(\mathcal{L},\alpha )\cap K_R\geq \dim_{H}\Sigma (\mathcal{L},\alpha )
    % \end{equation*}
    % for all $\alpha\in[\alpha_{\min}(\mathcal{L}),\alpha_{\max}(\mathcal{L})]$.
\end{theorem}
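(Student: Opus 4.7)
Fix $\alpha \in [\alpha_{\min}(\mathcal{L}), \alpha_{\max}(\mathcal{L})]$. The plan is to construct an interior point $x \in K_{\mathcal{L}}$ whose symbolic representation $\gamma$ has Lyapunov exponent $\alpha$, and then promote that equality to the local dimension at $x$ using the density property of \cref{t:symb-lset}(ii). Non-simplicity enters only through the choice of a short auxiliary path $\xi$.

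First, using non-simplicity, I would exhibit a finite path $\xi \in \Sigma(\mathcal{L})^*$ whose realization $(\Delta_0, \ldots, \Delta_{|\xi|})$ places $\Delta_{|\xi|}$ uniformly inside $\Delta_0$: there exists $\rho > 0$ such that every point of $\Delta_{|\xi|}$ lies at distance at least $\rho \diam(\Delta_0)$ from both endpoints of $\Delta_0$. In a simple loop class every cycle is forced to use the same set of edges, and a direct check shows that all resulting realizations accumulate against a common endpoint of the ancestor; non-simplicity allows one to splice two genuinely different cycles at a shared vertex and thereby move the terminal net interval into the middle of $\Delta_0$. With this $\xi$ in hand, apply \cref{t:symb-lset}(ii) to obtain $\gamma = (e_m)_{m=1}^\infty \in \Sigma(\mathcal{L})$ with $\lambda(\gamma) = \alpha$ together with a sequence $(m_j)$ satisfying $m_{j+1}/m_j \to 1$ such that $\xi$ is a prefix of $(e_{m_j}, e_{m_j+1}, \ldots)$ for each $j$; let $x$ be the point of $K_{\mathcal{L}}$ with symbolic representation $\gamma$.

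Each appearance of $\xi$ produces a pair of net intervals, both containing $x$ and both of diameter $\asymp W(\gamma | m_j)$, with the inner one at distance $\succcurlyeq W(\gamma | m_j)$ from the boundary of the outer. Since a boundary point of a net interval remains a boundary point of every descendant, $x$ cannot lie on the boundary of any net interval and is therefore interior in the sense of \cref{d:syr}. Moreover there is a uniform $\rho' > 0$ such that $B(x, \rho' W(\gamma | m_j))$ lies in the outer net interval for every $j$, so \cref{c:dt-form} gives $\mus(B(x, \rho' W(\gamma | m_j))) \preccurlyeq \norm{T(\gamma | m_j)}$. By \cref{p:nper-dim}(ii), $\overline{\dim}_{\loc}\mus(x) = \lambda(\gamma) = \alpha$. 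For the lower bound, given small $t > 0$ select $j$ with $\rho' W(\gamma | m_{j+1}) \leq t \leq \rho' W(\gamma | m_j)$, so $\mus(B(x, t)) \preccurlyeq \norm{T(\gamma | m_j)}$. Since $|\log W(\gamma | m_j)| \asymp m_j$ and $|\log W(\gamma | m_{j+1})| - |\log W(\gamma | m_j)| \leq |\log W_{\min}|(m_{j+1} - m_j)$, the density $m_{j+1}/m_j \to 1$ forces $\log t / \log W(\gamma | m_j) \to 1$ as $t \to 0$, whence
\begin{equation*}
    \underline{\dim}_{\loc}\mus(x) \geq \liminf_{j \to \infty} \frac{\log \norm{T(\gamma | m_j)}}{\log W(\gamma | m_j)} = \lambda(\gamma) = \alpha,
\end{equation*}
and so $\dim_{\loc}\mus(x) = \alpha$.

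The principal obstacle is the construction of $\xi$: one must verify that every non-simple irreducible loop class admits a realization whose terminal net interval is uniformly interior to its ancestor. This requires a careful analysis of how distinct cycles at a shared vertex translate the position indices of their realizations, and rules out the degenerate situation in which every path in $\mathcal{L}$ has its terminal pinned to an endpoint. Once $\xi$ is secured, the remainder is essentially bookkeeping: the density condition $m_{j+1}/m_j \to 1$ in \cref{t:symb-lset}(ii) is precisely what is needed to promote the Lyapunov equality, visible a priori only along the privileged scales $W(\gamma | m_j)$, to equality of local dimensions across all scales $t \to 0$.
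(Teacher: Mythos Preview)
Your argument is essentially identical to the paper's: choose $\xi$ using non-simplicity so that its realization lands strictly inside its ancestor, invoke \cref{t:symb-lset}(ii) to get $\gamma$ with $\lambda(\gamma)=\alpha$ and the density $m_{j+1}/m_j\to 1$, and then use that density to pass from control along the privileged scales $W(\gamma|m_j)$ to control at every scale. The one omission is that $\gamma\in\Sigma(\mathcal{L})$ need not begin at the root vertex $\vs([0,1])$, so it is not literally the symbolic representation of any $x\in K$; the paper prepends a fixed path $\zeta_0$ from the root into $\mathcal{L}$ and uses \cref{l:left-prod} to absorb its bounded contribution, which you should do as well.
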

\begin{proof}
    Since $\mathcal{L}$ is not simple, there exists a path $\xi\in\Sigma^*(\mathcal{L})$ such that if $\xi$ is realized by $(\Delta_i)_{i=0}^m$, then $\Delta_m\subseteq\Delta_0^\circ$.
    Let $\alpha\in[\alpha_{\min}(\mathcal{L}),\alpha_{\max}(\mathcal{L})]$ be arbitrary and by \cref{t:symb-lset} get some $\gamma=(e_n)_{n=1}^\infty\in\Sigma(\mathcal{L})$ and a sequence $(n_j)_{j=1}^\infty$ with $\lim_j n_{j+1}/n_j=1$ such that $\lambda(\gamma)=\alpha$ and for each $j$, $\xi$ is a prefix of $(e_{n_j},e_{n_j+1},\ldots)$.
    Let $\zeta_0$ be such that $\zeta_0\gamma$ is a path in $\mathcal{G}$ beginning at the root vertex $\vs([0,1])$.
    By the choice of $\xi$, there exists a unique interior point $x$ with symbolic representation $\zeta_0\gamma$.
    We will show that $\dim_{\loc}\mus(x)=\alpha$.

    We first note that $B(x,t)\supseteq\Delta_t(x)$ where $\Delta_t(x)$ is the unique net interval in generation $t$ containing $x$.
    Thus if $\zeta_0e_1\ldots e_n$ is the symbolic representation of $\Delta_t(x)$, then
    \begin{equation*}
        \mus(B(x,t)))\geq \mus(\Delta_{t}(x))\asymp \norm{T(\zeta_0 e_1\ldots e_n)}\asymp \norm{T(e_1\ldots e_n)}
    \end{equation*}
    by \cref{l:left-prod}.
    Hence for some constant $C>0$ we have
    \begin{equation*}
        \frac{\log \mus(B(x,t))}{\log t}\leq\frac{\log C\norm{T(e_1\ldots e_n)}}{\log t}.
    \end{equation*}
    Since $\log t\asymp W(e_1\ldots e_n)$, it follows that
    \begin{equation*}
        \overline{\dim}_{\loc}\mus(x)\leq\alpha.
    \end{equation*}
    
    To obtain the other inequality, we use the special properties of the path $\gamma$.
    For each $k\in\N$, let $\Delta^{(k)}$ be the net interval with symbolic representation $\zeta_0 e_1\ldots e_{n_k-1}$.
    Let $\rho>0$ be such that for each $k\in\N$,
    \begin{equation*}
        B(x,\rho\diam(\Delta^{(k)}))\subseteq\Delta^{(k)}.
    \end{equation*}
    Given $t>0$ sufficiently small, let $k$ be such that $\rho\diam(\Delta^{(k+1)})\leq t<\rho\diam(\Delta^{(k)})$, so that
    \begin{equation*}
        B(x,t)\subseteq B(x,\rho\diam(\Delta^{(k)}))\subseteq\Delta^{(k)}.
    \end{equation*}%
    This ensures that $\mus(B(x,t))\leq\mus(\Delta^{(k)})$.
    We also have
    \begin{align*}
        t \geq \rho\diam(\Delta^{(k+1)}) &\asymp W(\zeta_0e_1\ldots e_{n_{k+1}-1})= W(\zeta_0 e_1\ldots e_{n_k-1})W(e_{n_k}\ldots e_{n_{k+1}-1})\\
                                         &\geq W_{\min}^{n_{k+1}-n_k} W(\zeta_0 e_1\ldots e_{n_k-1})\asymp W_{\min}^{n_{k+1}-n_k}\diam(\Delta^{(k)}).
    \end{align*}
    
    Combining these observations, we see that there exist positive constants $C_1, C_2$ such that
    \begin{align*}
        \frac{\log\mus(B(x,t))}{\log t} &\geq\frac{\log \mus(\Delta^{(k)})}{\log t}\geq\frac{\log C_1+\log\norm{T(e_1\ldots e_{n_k-1})}}{\log \rho+\log \diam(\Delta^{(k+1)})}\\
                                        &\geq\frac{\log C_1+\log\norm{T(e_1\ldots e_{n_k-1})}}{(n_{k+1}-n_k)\log C_2+\log\diam(\Delta^{(k)})}
    \end{align*}
    But $\log\diam(\Delta^{(k)})\asymp -n_k$ and $\lim_k(n_{k+1}-n_k)/n_k=0$ by choice of $\gamma$, so that
    \begin{align*}
        \underline{\dim}_{loc}\mus(x)&\geq\liminf_{k\to\infty}\frac{\log \norm{T(e_1\ldots e_{n_k-1})}}{\log \diam(\Delta^{(k)})}\\
                                     &\geq\liminf_{n\to\infty}\frac{\log\norm{T(e_1\ldots e_n)}}{\log W(e_1\ldots e_n)}=\alpha.
    \end{align*}
    We have thus shown that $\dim_{\loc}\mus(x)=\alpha$, as required.
\end{proof}
\begin{remark}
    If $\mathcal{L}$ is a simple loop class with interior points, it is clear that the conclusions of the theorem also hold and $\alpha_{\min}=\alpha_{\max}$.
\end{remark}
The preceding theorem gives us strong information about the set of attainable local dimensions:
\begin{corollary}\label{c:loop-set}
    Let $\mathcal{L}$ be an irreducible, maximal loop class that is not simple.
    Then
    \begin{align*}
        [\alpha_{\min}(\mathcal{L}),\alpha_{\max}(\mathcal{L})] &= \{\dim_{\loc}\mus(x):x\in K_{\mathcal{L}},x\text{ interior}\}\\
                                                                &= \{\overline{\dim}_{\loc}\mus(x):x\in K_{\mathcal{L}},x\text{ interior}\}.
    \end{align*}
\end{corollary}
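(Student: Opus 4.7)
The plan is to derive the corollary as a bookkeeping consequence of \cref{t:gamma-proj} together with the sandwich inequalities in \cref{p:nper-dim}(i); since all the technical heavy lifting has already been carried out in those two results, I expect no substantial new work, only a careful verification of three inclusions.

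First I would address the first claimed equality. The inclusion $[\alpha_{\min}(\mathcal{L}),\alpha_{\max}(\mathcal{L})] \subseteq \{\dim_{\loc}\mus(x):x\in K_{\mathcal{L}},\ x\text{ interior}\}$ is literally the content of \cref{t:gamma-proj}. For the reverse containment, I would take an interior $x\in K_{\mathcal{L}}$ at which $\dim_{\loc}\mus(x)$ exists; by \cref{d:syr}, interior points have a unique symbolic representation $\gamma$, which must be eventually in $\mathcal{L}$ since $x\in K_{\mathcal{L}}$ and $\mathcal{L}$ is the unique maximal loop class with this property. Invoking \cref{p:nper-dim}(i) then sandwiches
\begin{equation*}
    \alpha_{\min}(\mathcal{L})\leq\underline{\lambda}(\mathcal{L},\gamma)\leq\overline{\dim}_{\loc}\mus(x)\leq\overline{\lambda}(\mathcal{L},\gamma)\leq\alpha_{\max}(\mathcal{L}),
\end{equation*}
and since $\dim_{\loc}\mus(x)=\overline{\dim}_{\loc}\mus(x)$ by assumption, this places $\dim_{\loc}\mus(x)$ in the desired interval.

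Next I would handle the second equality. The inclusion $[\alpha_{\min}(\mathcal{L}),\alpha_{\max}(\mathcal{L})] \subseteq \{\overline{\dim}_{\loc}\mus(x):x\in K_{\mathcal{L}},\ x\text{ interior}\}$ is immediate from \cref{t:gamma-proj}: the interior point it produces satisfies $\dim_{\loc}\mus(x)=\alpha$, so its upper local dimension is also $\alpha$. The reverse inclusion is already recorded in the chain of inequalities above: for any interior $x\in K_{\mathcal{L}}$ with symbolic representation $\gamma$, \cref{p:nper-dim}(i) directly bounds $\overline{\dim}_{\loc}\mus(x)$ between $\alpha_{\min}(\mathcal{L})$ and $\alpha_{\max}(\mathcal{L})$.

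There is no genuine obstacle. The only point to watch is the applicability of \cref{p:nper-dim}, which requires $x$ to have exactly one symbolic representation; restriction to interior points makes this automatic. I note the remark following \cref{t:gamma-proj} that the simple case collapses to $\alpha_{\min}=\alpha_{\max}$ with the same conclusions, so the non-simplicity hypothesis plays no extra role in the assembly beyond guaranteeing the non-degeneracy that \cref{t:gamma-proj} exploits.
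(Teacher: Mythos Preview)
Your proposal is correct and follows essentially the same approach as the paper: the paper's proof simply says to combine \cref{t:gamma-proj} with \cref{p:nper-dim}, noting that the set of local dimensions is contained in the set of upper local dimensions, and you have spelled out exactly those inclusions in somewhat more detail.
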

\begin{proof}
    This follows by combining \cref{t:gamma-proj} and \cref{p:nper-dim}, noting that the set of local dimensions is contained in the set of upper local dimensions.
\end{proof}
\begin{remark}\label{r:ldim-exception}
    When $\mathcal{L}$ is the essential class, this result was shown in \cite{ruttoappear}.
    Moreover, in that case, $[\alpha_{\min},\alpha_{\max}]$ is also the set of lower local dimensions.

    However, outside the essential class, the same statement need not hold for the lower local dimension in place of the upper local dimension; the set of lower local dimensions can be strictly larger.
    Consider the example from \cref{ex:other}; this example and the result here is treated in \cite[Section~6]{hhn2018}.
    In that example, with our notation, if $\mathcal{L}$ is the irreducible maximal loop class not equal to the essential class, then
    \begin{equation*}
        [\alpha_{\min}(\mathcal{L}),\alpha_{\max}(\mathcal{L})]=\left[\frac{\log 7}{\log 4},\frac{\log 14}{\log 4}\right],
    \end{equation*}
    while 
    \begin{equation*}
        \{\underline{\dim }_{loc}\mus(x):x\in K_{\mathcal{L}}\text{ interior}\}=\left[\frac{1}{2},\frac{\log 14}{\log 4}\right].
    \end{equation*}
    Here $1/2$ is also the local dimension of a boundary point (that is not interior).

    It would be interesting to know if the set of lower local dimensions at interior points is always an interval.
\end{remark}

If $\mathcal{L}$ is a loop class that contains interior points, then the set of local dimensions at these interior points is given by the interval $[\alpha_{\min}(\mathcal{L}),\alpha_{\max}(\mathcal{L})]$.
If $\mathcal{L}$ does not contain any interior points, then $\mathcal{L}$ is a simple loop class.
In this situation, it may hold that every $x\in K_{\mathcal{L}}$ has two symbolic representations, and the local dimension is always given by the symbolic representation of the adjacent path which is not eventually in $\mathcal{L}$.
This motivates the following definition:
\begin{definition}
    We say that a loop class $\mathcal{L}$ is \defn{non-degenerate} if $\mathcal{L}$ is not simple, or if $\mathcal{L}$ is simple with period $\theta$ and there exists some $x\in K_{\mathcal{L}}$ such that
    \begin{equation*}
        \dim_{\loc}\mus(x)=\lambda(\overline{\theta}).
    \end{equation*}
    We say that $\mathcal{L}$ is \defn{degenerate} otherwise.
\end{definition}
We emphasize that, unlike simplicity or irreducibility, degeneracy depends on the choice of probabilities.
For an example of this phenomenon, see \cref{ex:ftype}.

The point is that if $\mathcal{L}$ is a degenerate loop class, then the local dimension at any point $x\in K_{\mathcal{L}}$ is given by the Lyapunov exponent of a path not in $\mathcal{L}$.
We now have the following corollary, which holds under the assumptions that all maximal loop classes are either irreducible or simple.
\begin{corollary}\label{c:attainable}
    Let $\{S_i\}_{i\in\mathcal{I}}$ be an IFS satisfying the finite neighbour condition with maximal loop classes $\{\mathcal{L}_i\}_{i=1}^\ell$.
    Suppose each $\mathcal{L}_i$ is either irreducible or simple.
    Let $\{\mathcal{L}_i\}_{i=1}^{\ell'}$ denote the non-degenerate maximal loop classes.
    Then
    \begin{equation*}
        \{\overline{\dim}_{\loc}\mus(x):x\in K\}=\{\dim_{loc}\mus(x):x\in K\}=\bigcup_{i=1}^{\ell'}[\alpha_{\min}(\mathcal{L}_i),\alpha_{\max}(\mathcal{L}_i)].
    \end{equation*}
\end{corollary}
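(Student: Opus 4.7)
The plan is to establish the triple inclusion chain
\begin{equation*}
    \bigcup_{i=1}^{\ell'}[\alpha_{\min}(\mathcal{L}_i),\alpha_{\max}(\mathcal{L}_i)]\subseteq\{\dim_{\loc}\mus(x):x\in K\}\subseteq\{\overline{\dim}_{\loc}\mus(x):x\in K\}\subseteq\bigcup_{i=1}^{\ell'}[\alpha_{\min}(\mathcal{L}_i),\alpha_{\max}(\mathcal{L}_i)],
\end{equation*}
from which the corollary follows. The middle inclusion is trivial, since whenever $\dim_{\loc}\mus(x)$ exists it agrees with $\overline{\dim}_{\loc}\mus(x)$.

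For the first inclusion, I would fix a non-degenerate maximal loop class $\mathcal{L}_i$. If $\mathcal{L}_i$ is not simple, then by the standing hypothesis it is irreducible, and \cref{c:loop-set} provides the containment directly. If $\mathcal{L}_i$ is simple with common period $\theta$, then all cycles in $\mathcal{L}_i$ are built from the same edges, forcing $\alpha_{\min}(\mathcal{L}_i)=\alpha_{\max}(\mathcal{L}_i)=\lambda(\overline{\theta})$ by \cref{p:per-ldim} (this is also noted in the remark following \cref{c:loop-set}); non-degeneracy then supplies an $x\in K_{\mathcal{L}_i}$ with $\dim_{\loc}\mus(x)=\lambda(\overline{\theta})$.

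For the third inclusion, I would fix $x\in K$ and split into two cases. First, if $x$ is an interior point, then $x$ has a unique symbolic representation $\gamma$ eventually in some maximal loop class $\mathcal{L}$. When $\mathcal{L}$ is irreducible (hence automatically non-degenerate), part (i) of \cref{p:nper-dim} gives $\overline{\dim}_{\loc}\mus(x)\in[\alpha_{\min}(\mathcal{L}),\alpha_{\max}(\mathcal{L})]$. When $\mathcal{L}$ is simple with period $\theta$, $\gamma$ is eventually periodic, so \cref{p:per-K} gives $\dim_{\loc}\mus(x)=\lambda(\overline{\theta})$; this value simultaneously witnesses non-degeneracy of $\mathcal{L}$ and lies in its singleton interval. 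Second, if $x$ is a boundary point, then $x$ has two symbolic representations with periods $\theta_1,\theta_2$ lying in loop classes $\mathcal{L}^{(1)},\mathcal{L}^{(2)}$, and \cref{p:per-K} yields $\dim_{\loc}\mus(x)=\min\{\lambda(\overline{\theta_1}),\lambda(\overline{\theta_2})\}$, a value realised at some index $j_0$.

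The main obstacle is the boundary case: I need to show the index $j_0$ attaining the minimum corresponds to a \emph{non-degenerate} loop class, since otherwise this local dimension might fall outside the claimed union. Suppose, for contradiction, that $\mathcal{L}^{(j_0)}$ is degenerate. By definition, $\mathcal{L}^{(j_0)}$ is then simple with some period $\theta$, and since all cycles in a simple loop class share the same edge set, $\theta_{j_0}$ must be a repetition of $\theta$; hence $\lambda(\overline{\theta_{j_0}})=\lambda(\overline{\theta})$. But then $x\in K_{\mathcal{L}^{(j_0)}}$ satisfies $\dim_{\loc}\mus(x)=\lambda(\overline{\theta})$, directly contradicting the definition of degeneracy. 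Therefore $\mathcal{L}^{(j_0)}$ is non-degenerate, so $\dim_{\loc}\mus(x)=\lambda(\overline{\theta_{j_0}})\in[\alpha_{\min}(\mathcal{L}^{(j_0)}),\alpha_{\max}(\mathcal{L}^{(j_0)})]$, and the third inclusion is established.
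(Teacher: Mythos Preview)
Your proof is correct and follows essentially the same strategy as the paper's, which splits into periodic versus non-periodic points rather than interior versus boundary but invokes the same key results (\cref{p:per-K}, \cref{p:nper-dim}, \cref{c:loop-set}, \cref{t:gamma-proj}); your treatment of the boundary case is in fact more explicit than the paper's about why the minimizing loop class must be non-degenerate. One small wording issue: the parenthetical ``irreducible (hence automatically non-degenerate)'' is not literally true, since a simple loop class can be irreducible yet degenerate---what you need there is ``not simple (hence irreducible by hypothesis, and non-degenerate by definition)'', but your intended case split is clear and the argument goes through.
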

\begin{proof}
    If $x$ is a periodic point, then $\dim_{\loc}\mus(x)=\lambda(\mathcal{L},\gamma)$ for some path $\gamma$ in some non-degenerate loop class $\mathcal{L}$ according to \cref{p:per-K}.
    Otherwise, $x$ must be an interior point of some $K_{\mathcal{L}}$ where $\mathcal{L}$ is not simple.
    By \cref{c:loop-set}, $\overline{\dim}_{\loc}\mus(x)=\alpha$ for some $\alpha\in[\alpha_{\min}(\mathcal{L}),\alpha_{\max}(\mathcal{L})]$.

    On the other hand, if $\mathcal{L}$ is a non-simple loop class, then \cref{t:gamma-proj} shows that each $\alpha \in [\alpha_{\min}(\mathcal{L}),\alpha_{\max}(\mathcal{L})]$ is attained as a local dimension.
    If $\mathcal{L}$ is a simple loop class, then $\alpha_{\min}(\mathcal{L})=\alpha_{\max}(\mathcal{L})$ and this value is attained as a local dimension precisely when $\mathcal{L}$ is non-degenerate.
\end{proof}
\begin{remark}
    The authors do not know if this result continues to hold without the irreducibility assumption.
    However, we are not aware of any examples in $\R$ satisfying the weak separation condition which do not satisfy the hypotheses for \cref{c:attainable}.
\end{remark}

\section{Examples of IFS satisfying the finite neighbour condition}\label{s:exifs}
Throughout this section, for $\mathcal{L}$ a maximal loop class and $\mus$ a self similar measure, we will write
\begin{align*}
    \mathcal{D}(\mathcal{L})&=\{\lambda(x):x\in\Sigma(\mathcal{L})\}\\
    \mathcal{D}(\mus) &= \{\dim_{\loc}\mus(x):x\in\supp\mus\}.
\end{align*}

\subsection{Bernoulli convolutions}\label{ex:bconv}
One much studied example of an equicontractive IFS of finite type is the IFS with two contractions, 
\begin{equation}
    \{\rho x,\rho x+1-\rho \},  \label{BCIFS}
\end{equation}
with $\rho$ the reciprocal of the Golden mean.
Feng \cite{fen2005} (see also \cite{hhm2016,hhn2018}) computed the neighbour sets (or characteristic vectors in his terminology) with respect to the original net interval construction.

In our slightly modified setting, there are six neighbour sets.
These are:
\begin{align*}
    v_1 &=\{x\} & v_2 &= \{x\cdot(1+\rho)\}\\
    v_3 &=\{x\cdot(1+\rho)-\rho\} & v_{4} &=\{x\cdot(2+\rho),x(2+\rho)-(1+\rho)\}\\
    v_{5}&=\{x\cdot(3+2\rho)-(1+\rho)\} & v_6&=\{x\cdot(1+\rho), x\cdot(1+\rho)-\rho\}
\end{align*}
The weight function is given by $W(e)=\rho $ for all edges $e$.
The essential class has $V(\mathcal{G}_{\ess})=\{v_4,v_5,v_6\}$ and there are two other maximal loop classes, $\mathcal{L}_1$ and $\mathcal{L}_2$, which are the simple loops with vertex sets $V(\mathcal{L}_1)=\{v_{2}\}$ and $V(\mathcal{L}_2)=\{v_{3}\}$.
Both simple loop classes are non-degenerate since 0 and 1 are interior points.
We have $K_{\ess}=(0,1)$, $K_{\mathcal{L}_1}=\{0\}$, and $K_{\mathcal{L}_2}=\{1\}$.
See \cref{f:gm-graph} for the transition graph as well as the associated transition matrices.
\begin{figure}[ht]
    \begin{tikzpicture}[
        baseline=(current bounding box.center),
        vtx/.style={circle,inner sep=1pt,draw=black},
        elbl/.style={fill=white,circle,inner sep=1pt},
        edge/.style={thick,->,>=stealth},
        l1v/.style={fill=green!30},
        l1e/.style={draw=green!50!black},
        l2v/.style={fill=blue!30},
        l2e/.style={draw=blue!50!black},
        ev/.style={fill=red!30},
        ee/.style={draw=red!50!black},
        ]
        \node[vtx] (v1) at (0,6) {$v_1$};
        \node[vtx, l1v] (v2) at (-2,4) {$v_2$};
        \node[vtx, l2v] (v3) at (2,4) {$v_3$};
        \node[vtx, ev] (v4) at (0,2) {$v_4$};
        \node[vtx, ev] (v5) at (2.5,-0.5) {$v_5$};
        \node[vtx, ev] (v6) at (-2.5,-0.5) {$v_6$};

        \draw[edge] (v1) -- node[elbl]{$e_1$}(v2);
        \draw[edge] (v1) -- node[elbl]{$e_2$}(v3);
        \draw[edge] (v1) -- node[elbl]{$e_3$}(v4);

        \draw[edge,l1e] (v2) .. controls +(135:2) and +(225:2) .. node[elbl]{$e_4$}(v2);
        \draw[edge] (v2) -- node[elbl]{$e_5$}(v4);

        \draw[edge,l2e] (v3) .. controls +(45:2) and +(-45:2) .. node[elbl]{$e_6$}(v3);
        \draw[edge] (v3) -- node[elbl]{$e_7$}(v4);

        \draw[edge,ee] (v5) -- node[elbl]{$e_{8}$}(v4);
        \draw[edge,ee] (v6) -- node[elbl]{$e_{9}$}(v5);

        \draw[edge,ee] (v6) -- node[elbl]{$e_{10}$} (v4);
        \draw[edge,ee] (v6) to[out=45-30,in=225+30] node[elbl]{$e_{11}$} (v4);
        \draw[edge,ee] (v4) to[out=225-30,in=45+30] node[elbl]{$e_{12}$} (v6);

    \end{tikzpicture}
    \begin{tabular}{ccc}
        Edge & Weight & Transition Matrix\\
        $e_1$ & $\rho$ & $\begin{pmatrix}p\end{pmatrix}$\\
        $e_2$ & $\rho$ & $\begin{pmatrix}1-p\end{pmatrix}$\\
        $e_3$ & $\rho$ & $\begin{pmatrix}1-p&p\end{pmatrix}$\\
        $e_4$ & $\rho$ & $\begin{pmatrix}p\end{pmatrix}$\\
        $e_5$ & $\rho$ & $\begin{pmatrix} 1-p&p\end{pmatrix}$\\
        $e_6$ & $\rho$ & $\begin{pmatrix} 1-p\end{pmatrix}$\\
        $e_7$ & $\rho$ & $\begin{pmatrix} 1-p&p\end{pmatrix}$\\
        $e_8$ & $\rho$ & $\begin{pmatrix}1-p&p\end{pmatrix}$\\
        $e_9$ & $\rho$ & $\begin{pmatrix}p\\1-p\end{pmatrix}$\\
        $e_{10}$ & $\rho$ & $\begin{pmatrix}p & 0\\1-p&p\end{pmatrix}$\\
        $e_{11}$ & $\rho$ & $\begin{pmatrix}1-p&p\\0&1-p\end{pmatrix}$\\
        $e_{12}$ & $\rho$ & $\begin{pmatrix}p&0\\0&1-p\end{pmatrix}$
    \end{tabular}
    \caption{Transition graph for the Golden mean Bernoulli convolution}
    \label{f:gm-graph}
\end{figure}
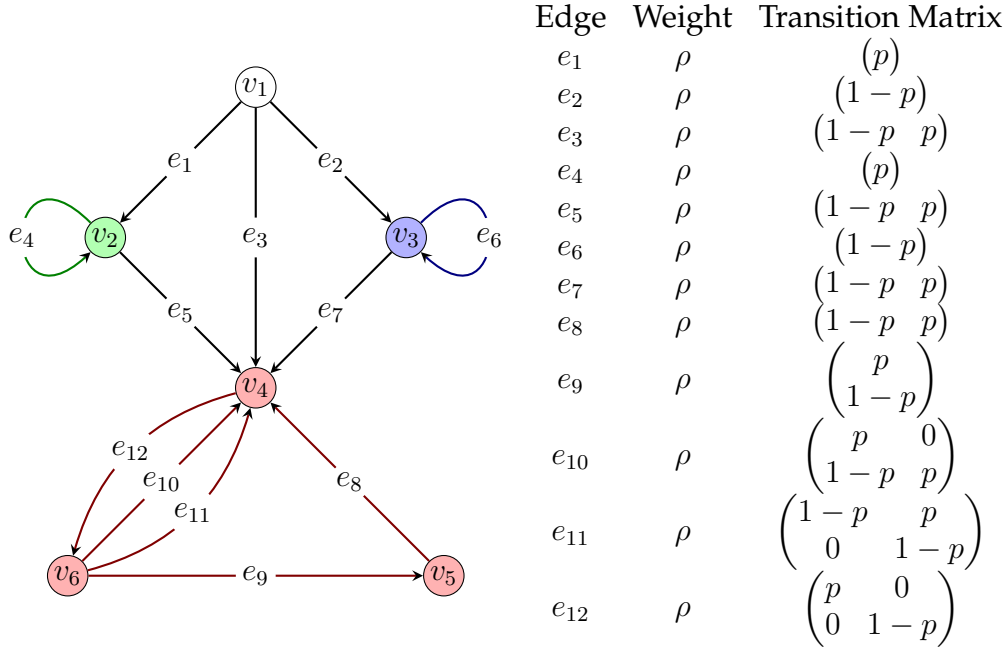

Since the essential class is always irreducible and the non-essential maximal loop classes are simple, the set of local dimensions is a union of a possibly non-singleton interval along with at most two isolated points.
The corresponding sets of Lyapunov exponents are
\begin{equation*}
    \mathcal{D}(\mathcal{L}_1)=\left\{\frac{\log p}{\log\rho}\right\}\text{ and }\mathcal{D}(\mathcal{L}_2)=\left\{\frac{\log(1-p)}{\log\rho}\right\}.
\end{equation*}
These are also the local dimensions at $0$ and $1$ respectively since $0$ and $1$ are interior points, so we have $\mathcal{D}(\mathcal{L}_1),\mathcal{D}(\mathcal{L}_2)\subseteq\mathcal{D}(\mus)$.

Now, for $p\leq 1/2$, note that the transition matrix of the cycle $(e_{11},e_{12})$ has spectral radius $(1-p)^2$ and weight $\rho^2$.
The Lyapunov exponent corresponding to this path is $\frac{\log(1-p)}{\log\rho}$ so that $\mathcal{D}(\mathcal{L}_2)\subseteq \mathcal{D}(\mathcal{G}_{\ess})$.
Similarly if $p\geq 1/2$, then the cycle $(e_{10},e_{12})$ has corresponding Lyapunov exponent $\frac{\log p}{\log\rho}$ and $\mathcal{D}(\mathcal{L}_1)\subseteq \mathcal{D}(\mathcal{G}_{\ess})$.
In particular, when $p=1/2$, then $\mathcal{D}(\mus)$ is a closed interval, and when $p\neq 1/2$, $\mathcal{D}(\mus)$ is a closed interval along with at most a singleton point.

When $p\neq 1/2$, we know in general, by a short argument in \cite{hh2019}, that $\mathcal{D}(\mus)$ must contain an isolated point corresponding to either $x=0$ or $x=1$, so $\mathcal{D}(\mus)$ is precisely a closed interval along with an isolated point.

\subsection{Testud measures}\label{ex:testud}
Consider the IFS given by the maps
\begin{align*}
    S_1(x)&=\frac{x}{4}& S_2(x)&=\frac{x}{4}+\frac{1}{4}&S_3(x)&=\frac{x}{4}+\frac{1}{2}& S_4(x)&=\frac{x}{4}+\frac{3}{4}\\
    S_5(x)&=-\frac{x}{4}+\frac{1}{4}& S_6(x)&=-\frac{x}{4}+\frac{1}{2}
\end{align*}
This example is treated in \cite[Section~6.2]{tes2006a}.
For each $i$, we have $S_i([0,1])=[j/4,(j+1)/4]$ for some $j\in\{0,1,2,3\}$.
There are two neighbour sets,
\begin{align*}
    v_1 &= \{x\} & v_2&=\{-x+1,x\}.
\end{align*}
The transition graph is given in \cref{f:testud}, and there is the essential class $\mathcal{G}_{\ess}$ with vertex set $\{v_2\}$ and a non-simple irreducible maximal loop class $\mathcal{L}$ with vertex set $\{v_1\}$.

Every cycle in $\mathcal{L}$ is a concatenation of the edges $e_1$ and $e_2$; since the corresponding transition matrices are singletons, we have
\begin{align*}
    \mathcal{D}(\mathcal{L})&=\Bigl\{\frac{\log(p_3^np_4^m)}{-\log 4^{n+m}}:n\geq 0,m\geq 0,n+m\geq 1\Bigr\}\\
                            &=\Bigl[\frac{\log \max\{p_3,p_4\}}{-\log 4},\frac{\log \min\{p_3,p_4\}}{-\log 4}\Bigr].
\end{align*}
Similarly, the cycles in $\mathcal{G}_{\ess}$ are arbitrary concatenations of edges in $\{e_5,e_6,e_7,e_8\}$.
Now, under the assumption that $p_1=p_4+p_5$ and $p_2=p_3+p_6$, if $\eta=(e_{i_1},e_{i_2},\ldots,e_{i_k})$ is any path in the essential class with $n$ edges in $\{e_5,e_8\}$ and $m$ edges in $\{e_6,e_7\}$, one may show that $\spr T(\eta)=p_1^np_2^m$.
Thus
\begin{align*}
    \mathcal{D}(\mathcal{G}_{\ess})&=\Bigl\{\frac{\log(p_1^np_2^m)}{-\log 4^{n+m}}:n\geq 0,m\geq 0,n+m\geq 1\Bigr\}\\
                                   &=\Bigl[\frac{\log \max\{p_1,p_2\}}{-\log 4},\frac{\log \min\{p_1,p_2\}}{-\log 4}\Bigr].
\end{align*}
We therefore have
\begin{align*}
    \mathcal{D}(\mus)=\Bigl[\frac{\log \max\{p_1,p_2\}}{-\log 4},\frac{\log \min\{p_1,p_2\}}{-\log 4}\Bigr]\cup\Bigl[\frac{\log \max\{p_3,p_4\}}{-\log 4},\frac{\log \min\{p_3,p_4\}}{-\log 4}\Bigr].
\end{align*}
In particular, if $p_1<p_2<p_3<p_4$, then $\mathcal{D}(\mus)$ is a disjoint union of two non-trivial closed intervals.

Note that the other examples treated in \cite{tes2006a} can be analyzed similarly.
\begin{figure}[ht]
    \begin{tikzpicture}[
        baseline=(current bounding box.center),
        vtx/.style={circle,inner sep=1pt,draw=black},
        elbl/.style={fill=white,circle,inner sep=1pt},
        edge/.style={thick,->,>=stealth},
        l1v/.style={fill=green!30},
        l1e/.style={draw=green!50!black},
        l2v/.style={fill=blue!30},
        l2e/.style={draw=blue!50!black},
        ev/.style={fill=red!30},
        ee/.style={draw=red!50!black},
        ]
        \node[vtx,l1v] (v1) at (4,0) {$v_1$};
        \node[vtx,ev] (v2) at (0,0) {$v_2$};

        % self loops at v1
        \draw[edge,l1e] (v1) .. controls +({60+45}:2) and +({60-45}:2) .. node[elbl]{$e_1$} (v1);
        \draw[edge,l1e] (v1) .. controls +({-60+45}:2) and +({-60-45}:2) .. node[elbl]{$e_2$} (v1);

        % double edges joining
        \draw[edge] (v1) to[out={180+12},in={-12}] node[elbl]{$e_3$} (v2);
        \draw[edge] (v1) to[out={180-12},in={12}] node[elbl]{$e_4$} (v2);

        % self loops at v2
        \draw[edge,ee] (v2) .. controls +({72+18}:3) and +({72-18}:3) .. node[elbl]{$e_8$} (v2);
        \draw[edge,ee] (v2) .. controls +({2*72+18}:3) and +({2*72-18}:3) .. node[elbl]{$e_7$} (v2);
        \draw[edge,ee] (v2) .. controls +({3*72+18}:3) and +({3*72-18}:3) .. node[elbl]{$e_6$} (v2);
        \draw[edge,ee] (v2) .. controls +({4*72+18}:3) and +({4*72-18}:3) .. node[elbl]{$e_5$} (v2);
        % \draw[edge] (v1) -- node[elbl]{$e_2$}(v2);
        % \draw[edge] (v1) -- node[elbl]{$e_4$}(v4);
        % \draw[edge] (v2) -- node[elbl]{$e_6$}(v3);
        % \draw[edge] (v2) -- node[elbl]{$e_7$}(v4);
        % \draw[edge] (v5) -- node[elbl]{$e_{10}$}(v2);
    \end{tikzpicture}
    \begin{tabular}{ccc}
        Edge & Weight & Transition Matrix\\
        $e_1$ & $1/4$ & $\begin{pmatrix}p_3\end{pmatrix}$\\
        $e_2$ & $1/4$ & $\begin{pmatrix}p_4\end{pmatrix}$\\
        $e_3$ & $1/4$ & $\begin{pmatrix}p_5&p_1\end{pmatrix}$\\
        $e_4$ & $1/4$ & $\begin{pmatrix}p_6&p_2\end{pmatrix}$\\
        $e_5$ & $1/4$ & $\begin{pmatrix}p_4&0\\p_5&p_1\end{pmatrix}$\\
        $e_6$ & $1/4$ & $\begin{pmatrix}p_3&0\\p_6&p_2\end{pmatrix}$\\
        $e_7$ & $1/4$ & $\begin{pmatrix}p_2&p_6\\0&p_3\end{pmatrix}$\\
        $e_8$ & $1/4$ & $\begin{pmatrix}p_1&p_5\\0&p_4\end{pmatrix}$
    \end{tabular}
    \caption{Transition graph for the Testud IFS}
    \label{f:testud}
\end{figure}
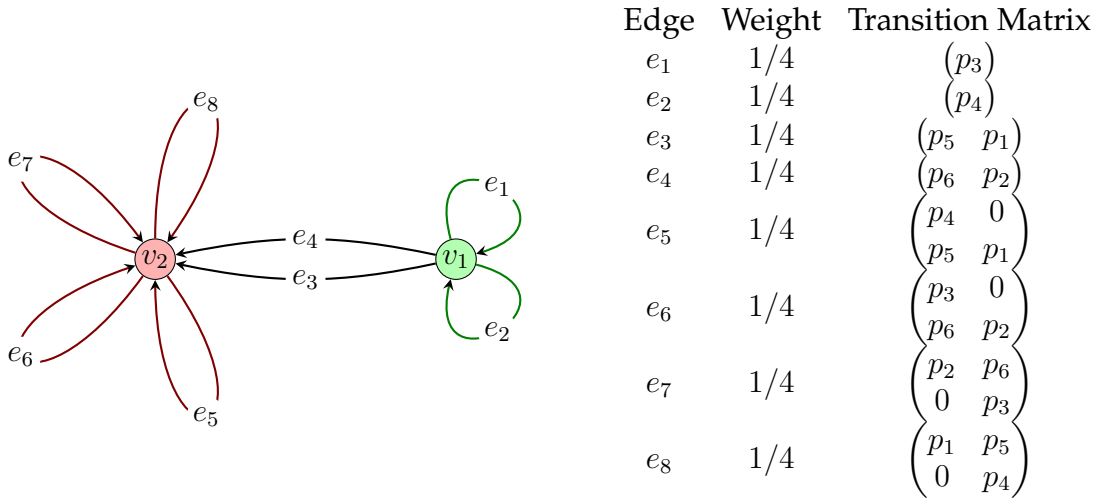
\subsection{Other examples}
\subsubsection{Cantor-like measures}
Consider the family of IFS given by
\begin{equation*}
    \bigl\{S_j(x)=\frac{x}{r}+\frac{j}{mr}(r-1):0\leq j\leq m\bigr\}
\end{equation*}
for integers $m,r$ satisfying $2\leq r\leq m$.
This family includes a rescaled version of the three-fold convolution of the middle-third Cantor measure, which was the earliest example of a self-similar measure known to exhibit isolated points in the set of local dimensions \cite{hl2001}.
The transition graph consists of an essential class along with two simple maximal loop classes $\mathcal{L}_1$ and $\mathcal{L}_2$, where $K_{\mathcal{L}_1}=\{0\}$, $K_{\mathcal{L}_2}=\{1\}$, and $K_{\mathcal{G}_{\ess}}=(0,1)$.
The loops $\mathcal{L}_1$ and $\mathcal{L}_2$ consist of single edges with $1\times 1$ transition matrices, and
\begin{align*}
    \mathcal{D}(\mathcal{L}_1)&=\frac{\log p_0}{-\log r} & \mathcal{D}(\mathcal{L}_2) &= \frac{\log p_m}{-\log r}.
\end{align*}
For appropriately chosen probabilities, these singletons contribute the isolated points in the set of local dimensions for the self-similar measure $\mus$ and the essential class contributes a closed interval of dimensions.
See \cite{hhn2018} for more details.

\subsubsection{An example of Lau and Wang}\label{ex:lw}
By nature of the definition, an IFS of finite type must have logarithmically commensurable contraction factors.
Here is an example of an IFS satisfying the finite neighbour condition which does not have commensurable contraction factors.

The IFS $\{\rho x,rx+\rho (1-r),rx+1-r\}$ where $\rho +2r-\rho r\leq 1$ was seen to satisfy the WSC in \cite{lw2004}, but it is not of finite type when $\rho$ and $r$ are non-commensurable.
For simplicity, we consider the case $\rho=1/3$ and $r=1/4$; for a more general treatment, this family was studied in \cite[Section~5.2]{ruttoappear}.

There are 5 neighbour sets given by
\begin{align*}
    v_1 &= \{x\} & v_2 &= \{4x/3\}\\
    v_3 &= \{3x/2-1/2\} & v_4 &= \{3x,4x-3\}\\
    v_5 &= \{x,3x\}.
\end{align*}
The transition graph and transition matrices are given in \cref{f:tgraph}.
One can see that there is only one maximal loop class, which is the essential class; thus, the set of local dimensions is a closed interval.
For more details on the computations of the set of attainable local dimensions, we refer the reader to \cite{ruttoappear}.
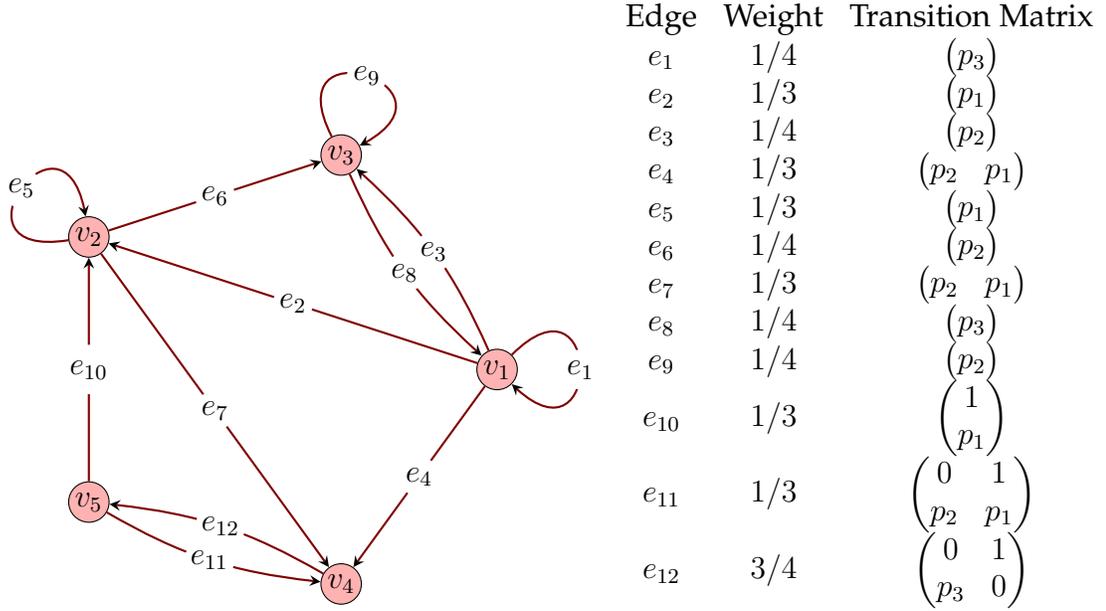
\begin{figure}[ht]
    \begin{tikzpicture}[
        baseline=(current bounding box.center),
        vtx/.style={circle,inner sep=1pt,draw=black,fill=red!30},
        elbl/.style={fill=white,circle,inner sep=1pt},
        edge/.style={thick,->,>=stealth},
        ev/.style={fill=red!30},
        ee/.style={draw=red!50!black},
        ]
        \node[vtx,ev] (v1) at (0:3) {$v_1$};
        \node[vtx,ev] (v3) at (72:3) {$v_3$};
        \node[vtx,ev] (v2) at (144:3) {$v_2$};
        \node[vtx,ev] (v5) at (216:3) {$v_5$};
        \node[vtx,ev] (v4) at (288:3) {$v_4$};

        % self loops
        \draw[edge,ee] (v1) .. controls +({0+45}:2) and +({0-45}:2) .. node[elbl]{$e_1$} (v1);
        \draw[edge,ee] (v3) .. controls +({72+45}:2) and +({72-45}:2) .. node[elbl]{$e_9$} (v3);
        \draw[edge,ee] (v2) .. controls +({144+45}:2) and +({144-45}:2) .. node[elbl]{$e_5$} (v2);

        % double edges
        \draw[edge,ee] (v1) to[out={126+0*72-12},in={306+0*72+12}] node[elbl]{$e_3$} (v3);
        \draw[edge,ee] (v3) to[in={126+0*72+12},out={306+0*72-12}] node[elbl]{$e_8$} (v1);
        \draw[edge,ee] (v5) to[out={126+3*72-12},in={306+3*72+12}] node[elbl]{$e_{11}$} (v4);
        \draw[edge,ee] (v4) to[in={126+3*72+12},out={306+3*72-12}] node[elbl]{$e_{12}$} (v5);

        % single edges
        \draw[edge,ee] (v1) -- node[elbl]{$e_2$}(v2);
        \draw[edge,ee] (v1) -- node[elbl]{$e_4$}(v4);
        \draw[edge,ee] (v2) -- node[elbl]{$e_6$}(v3);
        \draw[edge,ee] (v2) -- node[elbl]{$e_7$}(v4);
        \draw[edge,ee] (v5) -- node[elbl]{$e_{10}$}(v2);
    \end{tikzpicture}
    \begin{tabular}{ccc}
        Edge & Weight & Transition Matrix\\
        $e_1$ & $1/4$ & $\begin{pmatrix}p_3\end{pmatrix}$\\
        $e_2$ & $1/3$ & $\begin{pmatrix}p_1\end{pmatrix}$\\
        $e_3$ & $1/4$ & $\begin{pmatrix}p_2\end{pmatrix}$\\
        $e_4$ & $1/3$ & $\begin{pmatrix}p_2&p_1\end{pmatrix}$\\
        $e_5$ & $1/3$ & $\begin{pmatrix}p_1\end{pmatrix}$\\
        $e_6$ & $1/4$ & $\begin{pmatrix}p_2\end{pmatrix}$\\
        $e_7$ & $1/3$ & $\begin{pmatrix}p_2&p_1\end{pmatrix}$\\
        $e_8$ & $1/4$ & $\begin{pmatrix}p_3\end{pmatrix}$\\
        $e_9$ & $1/4$ & $\begin{pmatrix}p_2\end{pmatrix}$\\
        $e_{10}$ & $1/3$ &$\begin{pmatrix}1\\p_1\end{pmatrix}$\\
        $e_{11}$ & $1/3$ & $\begin{pmatrix}0&1\\p_2&p_1\end{pmatrix}$\\
        $e_{12}$ & $3/4$ & $\begin{pmatrix}0&1\\p_3&0\end{pmatrix}$
    \end{tabular}
    \caption{Transition graph for the example of Lau and Wang}
    \label{f:tgraph}
\end{figure}

\subsubsection{A non-equicontractive finite type example}\label{ex:ftype}
Here is an example which satisfies the finite type condition without equal contraction ratios.

Take $\rho=(\sqrt{5}-1)/2$, the reciprocal of the Golden mean.
Consider the IFS given by the maps
\begin{align*}
    S_1(x) &= \rho x & S_2(x)&=\rho^2 x+\rho-\rho^2 & S_3(x) &= \rho^2 x+(1-\rho^2)
\end{align*}
with probabilities $(p_i)_{i=1}^3$.
This IFS has 7 neighbour sets given by
\begin{align*}
    v_1 &= \{x\}\\
    v_2 &= \{(2+\rho)x\}\\
    v_3 &= \{x,(1+\rho)x-(1+\rho)\}\\
    v_4 &=\{(2+\rho)x,(3+2\rho)x-(1+\rho)\}\\
    v_5 &= \{(1+\rho)x-\rho,(2+\rho)x,(2+\rho)x-(1+\rho)\} &&\\
    v_6 &= \{(1+\rho)x,(2+\rho)x,(2+\rho)x-1\} &&\\
    v_7 &= \{(2+\rho)x,(2+\rho)x-(1+\rho),(3+2\rho)x-2(1+\rho),(3+2\rho)x-(1+\rho)\}.
\end{align*}
There are three simple non-essential maximal loop classes, with vertex sets $V(\mathcal{L}_1)=\{v_1\}$, $V(\mathcal{L}_2)=\{v_2\}$, and $V(\mathcal{L}_3)=\{v_3\}$.
The essential class has vertex set $V(\mathcal{G}_{\ess})=\{v_4,v_5,v_6,v_7\}$.
The transition graph and transition matrices are given in \cref{f:ftype}.

A direct computation shows that $\mathcal{D}(\mathcal{L}_1)=\mathcal{D}(\mathcal{L}_3)=\frac{\log p_3}{2\log\rho}$ and $\mathcal{D}(\mathcal{L}_2)=\frac{\log p_1}{\log\rho}$.
Thus $\mathcal{D}(\mus)$ consists of a possibly non-singleton interval along with at most two isolated points.
Both $K_{\mathcal{L}_1}$ and $K_{\mathcal{L}_2}$ contain interior points, so they are non-degenerate.
However, every point in $x\in K_{\mathcal{L}_3}$ has two symbolic representations of the form
\begin{equation*}
    (\underbrace{e_1,\ldots,e_1}_n,e_3,e_6,e_6,\ldots)\text{ and }(\underbrace{e_1,\ldots,e_1}_n,e_1,e_2,e_4,e_4,\ldots)
\end{equation*}
for some $n\geq 0$.
Thus for any $x\in K_{\mathcal{L}_3}$, we have
\begin{equation*}
    \dim_{\loc}\mus(x) = \min\Bigl\{\frac{\log p_1}{\log\rho},\frac{\log p_3}{2\log\rho}\Bigr\}
\end{equation*}
and when the minimum is not attained at $\frac{\log p_3}{2\log\rho}$, $\mathcal{L}_3$ is a degenerate loop class.
However, this does not impact the set of possible local dimensions.

Suppose in particular that the probabilities satisfy $p_1^2>p_2$ and $p_3>p_2$.
Then the cycle $(e_{10},e_{11})$ in the essential class has $\spr T(e_{10},e_{11})=p_1^2$ and $W(e_{10},e_{11})=\rho^2$, so
\begin{equation*}
    \mathcal{D}(\mathcal{L}_2)\subseteq\mathcal{D}(\mathcal{G}_{\ess}).
\end{equation*}
Similarly, the cycle $(e_{12},e_{13})$ has $\spr T(e_{12},e_{13})=p_3$ and $W(e_{12},e_{13})=\rho^2$ so
\begin{equation*}
    \mathcal{D}(\mathcal{L}_1)=\mathcal{D}(\mathcal{L}_3)\subseteq\mathcal{D}(\mathcal{G}_{\ess})
\end{equation*}
Therefore $\mathcal{D}(\mus)=\mathcal{D}(\mathcal{G}_{\ess})$ is a closed interval for such probabilities.
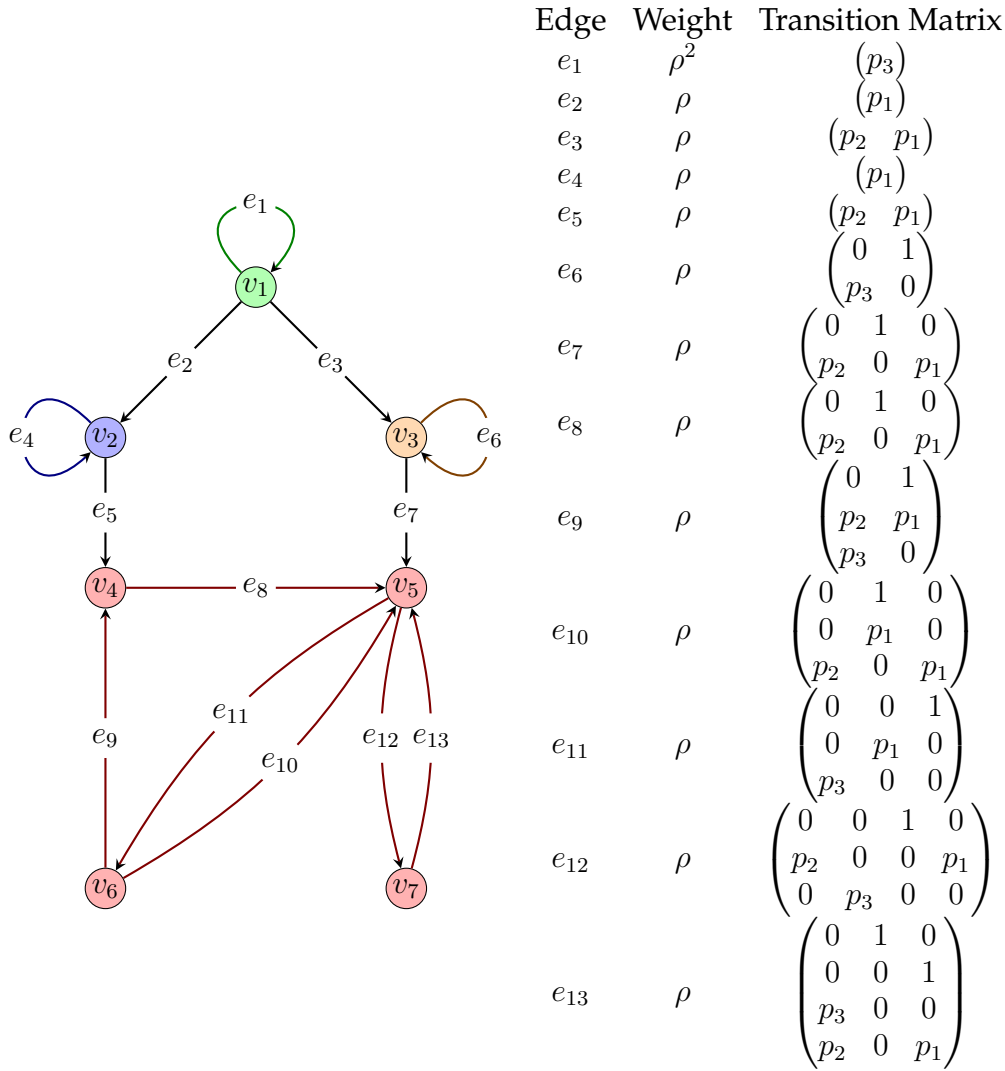
\begin{figure}[ht]
    \begin{tikzpicture}[
        baseline=(current bounding box.center),
        vtx/.style={circle,inner sep=1pt,draw=black},
        elbl/.style={fill=white,circle,inner sep=1pt},
        edge/.style={thick,->,>=stealth},
        l1v/.style={fill=green!30},
        l1e/.style={draw=green!50!black},
        l2v/.style={fill=blue!30},
        l2e/.style={draw=blue!50!black},
        l3v/.style={fill=orange!30},
        l3e/.style={draw=orange!50!black},
        ev/.style={fill=red!30},
        ee/.style={draw=red!50!black},
        ]
        \node[vtx, l1v] (v1) at (2,2) {$v_1$};
        \node[vtx, l2v] (v2) at (0,0) {$v_2$};
        \node[vtx, l3v] (v3) at (4,0) {$v_3$};
        \node[vtx, ev] (v4) at (0,-2) {$v_4$};
        \node[vtx, ev] (v5) at (4,-2) {$v_5$};
        \node[vtx, ev] (v6) at (0,-6) {$v_6$};
        \node[vtx, ev] (v7) at (4,-6) {$v_7$};

        \draw[edge,l1e] (v1) .. controls +(90+45:2) and +(90-45:2) .. node[elbl]{$e_1$}(v1);
        \draw[edge] (v1) -- node[elbl]{$e_2$}(v2);
        \draw[edge] (v1) -- node[elbl]{$e_3$}(v3);

        \draw[edge,l2e] (v2) .. controls +(135:2) and +(225:2) .. node[elbl]{$e_4$}(v2);
        \draw[edge] (v2) -- node[elbl]{$e_5$}(v4);

        \draw[edge,l3e] (v3) .. controls +(45:2) and +(-45:2) .. node[elbl]{$e_6$}(v3);
        \draw[edge] (v3) -- node[elbl]{$e_7$}(v5);

        \draw[edge,ee] (v4) -- node[elbl]{$e_8$}(v5);
        \draw[edge,ee] (v6) -- node[elbl]{$e_9$}(v4);

        \draw[edge,ee] (v6) to[out=45-15,in=225+15] node[elbl]{$e_{10}$} (v5);
        \draw[edge,ee] (v5) to[out=225-15,in=45+15] node[elbl]{$e_{11}$} (v6);

        \draw[edge,ee] (v5) to[out=270-15,in=90+15] node[elbl]{$e_{12}$} (v7);
        \draw[edge,ee] (v7) to[out=90-15,in=270+15] node[elbl]{$e_{13}$} (v5);
    \end{tikzpicture}
    \begin{tabular}{ccc}
        Edge & Weight & Transition Matrix\\
        $e_1$ & $\rho^2$ & $\begin{pmatrix}p_3\end{pmatrix}$\\
        $e_2$ & $\rho$ & $\begin{pmatrix}p_1\end{pmatrix}$\\
        $e_3$ & $\rho$ & $\begin{pmatrix}p_2&p_1\end{pmatrix}$\\
        $e_4$ & $\rho$ & $\begin{pmatrix}p_1\end{pmatrix}$\\
        $e_5$ & $\rho$ & $\begin{pmatrix}p_2&p_1\end{pmatrix}$\\
        $e_6$ & $\rho$ & $\begin{pmatrix}0&1\\p_3&0\end{pmatrix}$\\
        $e_7$ & $\rho$ & $\begin{pmatrix}0&1&0\\p_2&0&p_1\end{pmatrix}$\\
        $e_8$ & $\rho$ & $\begin{pmatrix}0&1&0\\p_2&0&p_1\end{pmatrix}$\\
        $e_9$ & $\rho$ & $\begin{pmatrix}0&1\\p_2&p_1\\p_3&0\end{pmatrix}$\\
        $e_{10}$ & $\rho$ & $\begin{pmatrix}0&1&0\\0&p_1&0\\p_2&0&p_1\end{pmatrix}$\\
        $e_{11}$ & $\rho$ & $\begin{pmatrix}0&0&1\\0&p_1&0\\p_3&0&0\end{pmatrix}$\\
        $e_{12}$ & $\rho$ & $\begin{pmatrix}0&0&1&0\\p_2&0&0&p_1\\0&p_3&0&0\end{pmatrix}$\\
        $e_{13}$ & $\rho$ & $\begin{pmatrix}0&1&0\\0&0&1\\p_3&0&0\\p_2&0&p_1\end{pmatrix}$
    \end{tabular}
    \caption{Transition graph for the non-equicontractive finite type example}
    \label{f:ftype}
\end{figure}

\subsubsection{An example with the set of lower local dimensions not equal to the set of upper local dimensions}\label{ex:other}
The IFS with $S_{i}(x)=x/4+d_{i}/12$ for $d_{i}=i$ when $i=0,1,...,5$, $ d_{6}=8$, $d_{7}=9$, is known to be of finite type \cite{hhn2018,nw2001} and satisfies the finite neighbour condition.
The essential class is a single vertex with four outgoing edges, and there are two additional loop classes: a simple loop class with one vertex, along with a non-simple irreducible loop class with three vertices.

This example is notable since the set of lower local dimensions in the non-essential irreducible loop class need not coincide with the set of upper local dimensions (see \cref{r:ldim-exception}).

\subsubsection{A Pisot reciprocal Bernoulli convolution with a non-simple non-essential loop class}\label{ex:large-loop}
Another interesting example is the Bernoulli convolution with parameter $\rho$, where $\rho$ is the reciprocal of the Pisot root of $x^{3}-x^2-1$.
This finite type IFS has 5 maximal loop classes: the essential class with 46 elements, another irreducible loop class with 23 elements, and 3 additional simple loop classes.
For more details on this IFS, we refer the reader to \cite{hhm2015}.
\bibliographystyle{amsplain}
\bibliography{texproject/citations/local-main}
\end{document}